\theoremstyle{plain}
\newtheorem{theorem}{Theorem}[section]
\newtheorem{lemma}[theorem]{Lemma}
\newtheorem{corollary}[theorem]{Corollary}
\newtheorem{proposition}[theorem]{Proposition}
\theoremstyle{definition}
\newtheorem{definition} [theorem]{Definition}
\begin{document}

\newcommand{\Lip}{\operatorname{Lip}}
\newcommand{\B}{\mathcal{B}}
\renewcommand{\O}{\mathcal{O}}
\renewcommand{\L}{\mathcal{L}}
\newcommand{\U}{\mathcal{U}}
\newcommand{\V}{\mathcal{V}}
\newcommand{\E}{\mathbb{E}}
\newcommand{\ES}{\mathbb{E}}
\renewcommand{\P}{\mathbb{P}}
\newcommand{\R}{\mathbb{R}}
\newcommand{\N}{\mathbb{N}}
\newcommand{\smallsum}{\textstyle\sum}
\newcommand{\tr}{\operatorname{trace}}
\newcommand{\citationand}{\&}
\newcommand{\dt}[1][t]{\, \mathrm{d} #1}
\newcommand{\grid}{\,\mathcal{P}}
\newcommand{\euler}{Z^{\,N}}
\newcommand{\leuler}{\tilde{Z}^{\,N}}
\newcommand{\exteuler}{\bar{Z}^{\,N}}

\newcommand{\floor}[2]{\lfloor #1\rfloor_{#2}}
\newcommand{\ceil}[2]{\lceil #1\rceil_{#2}}
\newcommand{\mesh}[2]{\|#1\|^{#2}_{\grid_T}}
\newcommand{\bigbrack}[2]{\big( #1\big)^{#2}}
\newcommand{\bignorm}[3]{\bnl #1\bnr^{#2}_{#3}}
\newcommand{\bigsharp}[2]{\big[ #1\big]^{#2}}
\newcommand{\lpn}[3]{\mathcal{L}^{#1}(#2;#3)}
\newcommand{\lpnb}[3]{L^{#1}(#2;#3)}
\newcommand{\eulerm}[1]{Z^{\,\Theta_{#1}}}
\newcommand{\eulerpart}[1]{Z^{#1}}

\newcommand{\expeuler}[1]{Z^{\,\text{exp},#1}}
\newcommand{\impeuler}[1]{Z^{\,\text{imp},#1}}

\newcommand{\embed}[2]{\kappa^I_{#1,\,#2}}

\newcommand{\groupC}{ C }
\newcommand{\diagC}{ C }
\newcommand{\psiC}{ \eta }
\newcommand{\driftC}{{\bf y}}
\newcommand{\diffusionC}{{\bf z}}
\newcommand{\power}{ q }

\newcommand{\set}{{\mathbb{H}}}

\newcommand{\resolvent}[2]{R_{#2}( #1 )}

\title{On the mild It\^o formula in Banach spaces}

\author{Sonja Cox, Arnulf Jentzen, Ryan Kurniawan, and Primo\v{z} Pu\v{s}nik
	 \\ 
\emph{University of Amsterdam, the Netherlands
	and}
\\
\emph{ETH Z\"urich, Switzerland}
} 
 
\maketitle

\begin{abstract}
The mild It\^o formula proposed in
Theorem 1 in [Da Prato, G., Jentzen, A., \& R\"ockner, M., 
A mild It\^o formula for SPDEs,
arXiv:1009.3526 (2012), To appear in the Trans.\ Amer.\ Math.\ Soc.]
has turned out to be a useful instrument to study
solutions and numerical approximations of stochastic
partial differential equations (SPDEs) which are
formulated as stochastic evolution equations
(SEEs) on Hilbert spaces.
In this article we generalize this mild It\^o formula
so that
it is applicable to solutions and numerical approximations of
SPDEs which are formulated as SEEs on UMD (unconditional martingale differences) Banach spaces.
This generalization is especially
useful for proving essentially sharp
weak convergence rates
for numerical approximations of SPDEs.
\end{abstract}

\tableofcontents

\section{Introduction}
\label{sec:intro}

The standard It\^o formula for finite dimensional
It\^o processes has been generalized in the literature
to infinite dimensions
so that it is applicable to It\^o processes
with values in infinite dimensional Hilbert or Banach spaces;
see Theorem~2.4 in Brze\'zniak,
Van Neerven, Veraar \& Weiss~\cite{BrzezniakVanNeervenVeraarWeis2008}.
This infinite dimensional
generalization of the standard
It\^o formula is, however,
typically not applicable to a solution
(or a numerical approximation)
of a stochastic partial differential equation (SPDE)
as solutions of SPDEs are often only solutions
in the mild or weak sense,
which are not It\^o processes
on the considered state space of the SPDE.
To overcome this lack of regularity of solutions
of SPDEs,
Da Prato et al.\ proposed
in Theorem~1
in~\cite{DaPratoJentzenRoeckner2012}
(see also~\cite[Section~5]{JentzenKurniawan2015arXiv})
an alternative formula which 
Da Prato et al.\ refer to
as a mild It\^o formula.
The mild It\^o formula in Theorem~1 in~\cite{DaPratoJentzenRoeckner2012}
is
(even in finite dimensions)
different to the standard It\^o formula but it applies
to the class of Hilbert space valued mild It\^o processes
which is a rather general class
of Hilbert space valued stochastic processes that includes
standard It\^o processes as well as
mild solutions and numerical approximations
of semilinear SPDEs as special cases.
In this work we generalize
the mild It\^o formula
so that it is applicable to mild It\^o processes
which take values in UMD (unconditional martingale differences) 
Banach spaces
with type~2;
see Definition~\ref{def:mildIto}
in Subsection~\ref{sec:mildprocesses},
see Theorem~\ref{thm:ito}
in Subsection~\ref{sec:mildito},
and see Corollary~\ref{cor:itoauto2}
in Subsection~\ref{sec:mildito} below.
This generalization of the mild It\^o formula
is especially useful for proving essentially
sharp weak convergence rates for numerical
approximations of SPDEs.
In Section~\ref{section:SPDEsBanach}
below we also briefly review a few well-known results
for Nemytskii
and multiplication operators
in Banach spaces
(see Proposition~\ref{proposition:nonlinearityF},
Proposition~\ref{prop.nonlinearityB},
and Corollary~\ref{corollary:nonlinearityB}
in Section~\ref{section:SPDEsBanach} below)
which provide natural examples
for the possibly nonlinear
test function appearing
in the mild It\^o 
formula in Corollary~\ref{cor:itoauto2}
in Subsection~\ref{sec:mildito}
below.
%
%
%In Section~\ref{section:SPDEsBanach}
%below we also provide natural examples
%for the possibly nonlinear coefficients
%(see Subsection~\ref{subsection:RegularTestFunctions}--\ref{subsection:Regular_diffusion} below)
%and the possibly nonlinear
%test function
%(see Subsection~\ref{subsection:RegularTestFunctions} below)
%appearing in the mild It\^o formula in Corollary~\ref{cor:itoauto2}
%below.
%
% 
%
%
\subsection{Notation}
\label{sec:notation}
Throughout this article the following notation is
frequently used. 
Let $ \N = \{ 1, 2, 3, \dots \} $ be the set of natural numbers. 
Let $ \N_0 = \N \cup \{ 0 \} $ be the union of $ \{ 0 \} $ and the set of natural numbers. 
For all sets $ A $ and $ B $ let 
$ \mathbb{M}( A, B ) $ be the set of all functions from $ A $ to $ B $. 
For all measurable spaces 
$
  ( \Omega_1,\mathcal{F}_1 ) 
$
and
$
  ( \Omega_2,\mathcal{F}_2 ) 
$
let
$ 
  \mathcal{M}( \mathcal{F}_1, \mathcal{F}_2 )
$ 
be the set of all 
$
  \mathcal{F}_1
$/$
  \mathcal{F}_2 
$-measurable functions.
For all separable $\R$-Hilbert spaces 
$
   (
     \check{H},
     \left< \cdot , \cdot \right>_{ \check{H} },
     \left\| \cdot \right\|_{ \check{H} }
   )
$ 
and 
$
   (
     \hat{H},
     \left< \cdot , \cdot \right>_{ \hat{H} },
     \left\| \cdot \right\|_{ \hat{H} }
   )
$
let
$
  \mathcal{S}( \hat{H}, \check{H} )
$
be the sigma algebra on
$ L( \hat{H}, \check{H} ) $
given by
$
  \mathcal{S}( \hat{H}, \check{H} )
  =
  \sigma_{
    L( \hat{H}, \check{H} )
  }(
    \cup_{ v \in \hat{H} }
    \cup_{ \mathcal{A} \in \mathcal{B}( \check{H} ) }
    \{
      A \in L( \hat{H}, \check{H} )
      \colon
      Av \in \mathcal{A}
    \}
  )
$
(see, e.g.,\ \cite[Section~1.2]{dz92}).
For every $ d \in \N $
and every $ A \in \B(\R^d) $
let
$ \lambda_A \colon \B(A) \to [0,\infty] $
be the Lebesgue-Borel measure on $ A $.
For every set $ X $ 
 let
 $ \#_X \in \N_0 \cup \{\infty \} $
 be the number of elements of $ X $.
  For every measure space
  $ ( \Omega, \mathcal{F}, \nu) $,
  every measurable space $ ( S, \mathcal{S} ) $,
  every set $ R $,
  and every function
  $ f \colon \Omega \to R $
  let
  $ [f]_{\nu, \mathcal{S}} $
  be the set given by
  $ [f]_{\nu, \mathcal{S}} =
  \{
  g \in \mathcal{M}(\mathcal{F},
  \mathcal{S})
  \colon
  (
  \exists\, A \in \mathcal{F} \colon
  \nu(A) = 0 \,\,\text{and}\,\,
  \{\omega \in \Omega \colon
  f(\omega) \neq g(\omega)
  \}
  \subseteq A
  )
  \} $.

\section{Stochastic partial differential equations in Banach spaces}
\label{section:SPDEsBanach}
In this section we recall a few well-known results for SPDEs
on UMD Banach spaces.
In particular,
Proposition~\ref{proposition:nonlinearityF}
below provides natural examples for the possibly
nonlinear test function appearing in the mild It\^o 
formula in Corollary~\ref{cor:itoauto2} 
in Subsection~\ref{sec:mildito} below.
\subsection{Preliminary results}
\label{section:Preliminary}
The following lemma
and its proof can, e.g., 
be found in
Van Neerven~\cite{VanNeerven2010Gamma} (cf.\ \cite[Theorem~6.2]{VanNeerven2010Gamma}
and~\cite[Definition~3.7]{VanNeerven2010Gamma}).
 \begin{lemma}[An ideal property for $ \gamma $-radonifying operators]
 	\label{lemma:multiplication_estimate}
 	Let $ ( U, \langle \cdot, \cdot \rangle_U, \left \| \cdot \right \|_U ) $
 	and
 	$ ( \U, \langle \cdot, \cdot \rangle_\U, \left \| \cdot \right \|_\U ) $
 	be  
 	$ \R $-Hilbert spaces,
 	let $ ( V, \left \| \cdot \right \|_V ) $
 	and
 	$ ( \V, \left \| \cdot \right \|_\V ) $
 	be $ \R $-Banach spaces,
 	and let
 	$ A \in L ( V, \V ) $,
 	$ B \in \gamma ( U, V ) $,
 	$ C \in L ( \U, U ) $.
 	Then it holds that
 	$ A B C \in \gamma ( \U, \V ) $
 	and
 	\begin{equation}
 	\| A B C \|_{\gamma( \U, \V )}
 	\leq
 	\| A \|_{ L ( V, \V ) }
 	\| B \|_{ \gamma ( U, V ) }
 	\| C \|_{ L ( \U, U ) }
 	.
 	\end{equation}
 \end{lemma}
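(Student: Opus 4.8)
The plan is to prove the statement by combining a \emph{left ideal property} and a \emph{right ideal property}, exploiting the factorisation $ABC = A(BC)$. Recall that for a separable $\R$-Hilbert space $H$, an operator $T \in L(H,V)$, an orthonormal basis $(h_k)_{k\in\N}$ of $H$, and a sequence $(\gamma_k)_{k\in\N}$ of independent standard normal random variables on some probability space $(\Omega,\mathcal{F},\P)$, one has $T \in \gamma(H,V)$ if and only if $\smallsum_{k\in\N}\gamma_k T h_k$ converges in $L^2(\Omega;V)$, in which case $\|T\|_{\gamma(H,V)}^2 = \E\big[\|\smallsum_{k\in\N}\gamma_k T h_k\|_V^2\big]$; moreover $\gamma(H,V)$ can be described as the closure of the finite rank operators with respect to the $\gamma$-norm, and $\|\cdot\|_{L(H,V)} \leq \|\cdot\|_{\gamma(H,V)}$. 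The left ideal property --- that $AB \in \gamma(U,\V)$ with $\|AB\|_{\gamma(U,\V)} \leq \|A\|_{L(V,\V)}\|B\|_{\gamma(U,V)}$ --- is immediate: for an orthonormal basis $(e_k)_{k\in\N}$ of $U$, boundedness and linearity of $A$ give $\smallsum_{k\in\N}\gamma_k (AB)e_k = A\big(\smallsum_{k\in\N}\gamma_k Be_k\big)$ in $L^2(\Omega;\V)$ and hence $\E\big[\|\smallsum_{k}\gamma_k ABe_k\|_\V^2\big] \leq \|A\|_{L(V,\V)}^2\,\E\big[\|\smallsum_{k}\gamma_k Be_k\|_V^2\big]$.

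The substantive part is the right ideal property: $BC \in \gamma(\U,V)$ with $\|BC\|_{\gamma(\U,V)} \leq \|B\|_{\gamma(U,V)}\|C\|_{L(\U,U)}$. First I would reduce to the case of finite rank $B$: given $B \in \gamma(U,V)$, choose finite rank operators $B_n \in L(U,V)$ with $\|B_n - B\|_{\gamma(U,V)} \to 0$; once the inequality is known for each $B_n$, the operators $B_nC$ form a Cauchy sequence in $\gamma(\U,V)$ (since $\|B_nC - B_mC\|_{\gamma(\U,V)} \leq \|B_n - B_m\|_{\gamma(U,V)}\|C\|_{L(\U,U)}$) and at the same time $B_nC \to BC$ in $L(\U,V)$ (using $\|\cdot\|_L \leq \|\cdot\|_\gamma$), whence $BC \in \gamma(\U,V)$ and $\|BC\|_{\gamma(\U,V)} = \lim_n \|B_nC\|_{\gamma(\U,V)} \leq \|B\|_{\gamma(U,V)}\|C\|_{L(\U,U)}$. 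It therefore suffices to treat $B = \smallsum_{i=1}^m \langle\cdot,h_i\rangle_U v_i$ with $v_1,\dots,v_m \in V$ and, after a Gram--Schmidt step, $h_1,\dots,h_m \in U$ orthonormal, so that $\|B\|_{\gamma(U,V)}^2 = \E\big[\|\smallsum_{i=1}^m \gamma_i v_i\|_V^2\big]$.

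For such $B$ one has $BC = \smallsum_{i=1}^m \langle\cdot, C^*h_i\rangle_\U v_i$, which is again finite rank; expanding each $C^*h_i$ in an orthonormal basis of the finite dimensional space $\operatorname{span}\{C^*h_1,\dots,C^*h_m\} \subseteq \U$ and using Parseval, one computes that $\|BC\|_{\gamma(\U,V)}^2 = \E\big[\|\smallsum_{i=1}^m \xi_i v_i\|_V^2\big]$, where $(\xi_1,\dots,\xi_m)$ is a centered Gaussian vector with covariance matrix equal to the Gram matrix $\big[\langle C^*h_i, C^*h_j\rangle_\U\big]_{i,j=1}^m = \big[\langle CC^*h_i, h_j\rangle_U\big]_{i,j=1}^m$. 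Since $h_1,\dots,h_m$ are orthonormal, this Gram matrix is a compression of the self-adjoint operator $CC^* \in L(U)$ and hence is dominated by $\|CC^*\|_{L(U)}\,\mathrm{Id}_{\R^m} = \|C\|_{L(\U,U)}^2\,\mathrm{Id}_{\R^m}$, the covariance matrix of $\big(\|C\|_{L(\U,U)}\gamma_1,\dots,\|C\|_{L(\U,U)}\gamma_m\big)$.

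The proof is then finished by the elementary Gaussian domination principle: if $\Sigma_1 \leq \Sigma_2$ are real symmetric positive semidefinite $m \times m$ matrices and $\zeta^{(1)},\zeta^{(2)}$ are centered Gaussian vectors in $\R^m$ with these covariances, then $\E\big[\|\smallsum_i \zeta^{(1)}_i v_i\|_V^2\big] \leq \E\big[\|\smallsum_i \zeta^{(2)}_i v_i\|_V^2\big]$ for all $v_1,\dots,v_m \in V$ --- one realises $\zeta^{(2)}$ in distribution as $\zeta^{(1)} + \delta$ with $\delta$ an independent centered Gaussian vector of covariance $\Sigma_2 - \Sigma_1 \geq 0$ and applies Jensen's inequality to the convex function $x \mapsto \|x\|_V^2$ after conditioning on $\zeta^{(1)}$. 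Applied with the Gram matrix above and $\Sigma_2 = \|C\|_{L(\U,U)}^2\,\mathrm{Id}_{\R^m}$ this gives $\|BC\|_{\gamma(\U,V)}^2 \leq \|C\|_{L(\U,U)}^2\,\E\big[\|\smallsum_{i=1}^m \gamma_i v_i\|_V^2\big] = \|C\|_{L(\U,U)}^2\,\|B\|_{\gamma(U,V)}^2$. Combining the two ideal properties yields $BC \in \gamma(\U,V)$ and then $ABC = A(BC) \in \gamma(\U,\V)$ with $\|ABC\|_{\gamma(\U,\V)} \leq \|A\|_{L(V,\V)}\|BC\|_{\gamma(\U,V)} \leq \|A\|_{L(V,\V)}\|B\|_{\gamma(U,V)}\|C\|_{L(\U,U)}$. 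The only real obstacle is the right ideal property: since $C$ need not send an orthonormal basis of $\U$ to an orthonormal system in $U$, one cannot argue termwise, and it is precisely the finite rank reduction together with the covariance comparison that overcomes this.
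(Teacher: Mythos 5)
Your proposal is correct. Note that the paper does not prove Lemma~\ref{lemma:multiplication_estimate} itself: it is quoted from Van Neerven's survey (Theorem~6.2 there), so there is no in-paper argument to compare against, and what you have written is essentially a self-contained reconstruction of the standard proof behind that citation. The decomposition into a left ideal bound (pulling the bounded operator $A$ through the Gaussian sum, or equivalently through finite-rank approximants) and a right ideal bound (reduction to finite rank $B$ with orthonormal $h_1,\dots,h_m$, identification of $\|BC\|_{\gamma(\U,V)}^2$ as the second moment of a Gaussian vector with covariance $[\langle C^*h_i,C^*h_j\rangle_\U]_{i,j}$, and domination of that covariance by $\|C\|_{L(\U,U)}^2\,\mathrm{Id}$) is exactly the classical route; the only stylistic difference is in the comparison step, where the reference typically dilates the contraction $C^*/\|C\|$ to a co-isometry, whereas you realise the dominating Gaussian as the dominated one plus an independent increment and apply conditional Jensen --- both are valid and give the same inequality. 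One small point to tidy up: the lemma does not assume $U$ and $\U$ separable, while your characterisation of the $\gamma$-norm via a single countable orthonormal basis and an $L^2$-convergent Gaussian series presupposes separability. This is harmless: either observe that any $B\in\gamma(U,V)$ is a $\gamma$-limit of finite rank operators and hence factors through a separable closed subspace of $U$, or run the left-ideal step through the same finite-rank approximation you already use for the right-ideal step (for finite rank $B$ the bound $\|AB\|_{\gamma(U,\V)}\leq\|A\|_{L(V,\V)}\|B\|_{\gamma(U,V)}$ is immediate from convexity/linearity), after which both estimates pass to the limit and the factorisation $ABC=A(BC)$ closes the argument as you describe.
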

 The next result is an elementary extension of 
 Brze\'zniak et al.\ \cite[Lemma~2.3]{BrzezniakVanNeervenVeraarWeis2008}.
 \begin{lemma}
 	\label{lemma:gamma_estimate}
 	Consider the notation in Subsection~\ref{sec:notation},
 	let $ ( U, \langle \cdot, \cdot \rangle_U, \left \| \cdot \right \|_U ) $
 	be a separable $ \R $-Hilbert space, 
 	let $ ( V, \left \| \cdot \right \|_V ) $
 	and
 	$ ( \V, \left \| \cdot \right \|_\V ) $
 	 be $ \R $-Banach spaces,
 	and let
 	$ \beta \in L^{(2)}(V, \V) $.
 	Then
 	\begin{enumerate}[(i)]
 		\item it holds for all 	$ A_1, A_2 \in \gamma (U, V ) $
 		and all 
 		orthonormal sets
 		$ \mathbb{U} \subseteq U $ of $ U $
 		that
 		there exists a unique $ v \in \V $
 		such that  
 		\begin{equation}
 		\inf_{ \substack{ I \subseteq \mathbb{U}, \\ \#_I < \infty } }
 		\sup_{ \substack{ I \subseteq J \subseteq \mathbb{U}, \\ \#_J < \infty } }
 		\bigg\| v - \smallsum\limits_{ u \in J } \beta( A_1 u, A_2 u ) \bigg\|_\V = 0
 		,
 		\end{equation}
 		\item it holds for all orthonormal bases
 		$ \mathbb{U}_1, \mathbb{U}_2 \subseteq U $
 		of $ U $ that
 		\begin{equation}
 		\sum_{ u \in \mathbb{U}_1 }
 		\beta( A_1 u, A_2 u )
 		=
 		\sum_{ u \in \mathbb{U}_2 }
 		\beta( A_1 u, A_2 u )
 		,
 		\end{equation}
 		\item 
 		it holds for all
 		$ A_1, A_2 \in \gamma (U, V ) $ 
 		and all
 		orthonormal sets $ \mathbb{U} \subseteq U $ of $ U $ that
 		\begin{equation}
 		\bigg \| \smallsum\limits_{u \in \mathbb{U} } \beta( A_1 u, A_2 u ) \bigg \|_\V
 		\leq
 		\| \beta \|_{L^{(2)}(V, \V)}
 		\| A_1 \|_{\gamma(U,V)}
 		\| A_2 \|_{\gamma(U,V)}
 		,
 		\end{equation}
 		and
 		\item it holds for all 
 		orthonormal sets $ \mathbb{U} \subseteq U $ of $ U $
 		that
 		\begin{equation}
 		\begin{split}
 		\bigg( 
 		\gamma(U,V) \times \gamma(U,V) \ni (A_1, A_2)
 		\mapsto
 		\smallsum\limits_{u \in \mathbb{U} }
 		\beta ( A_1 u, A_2 u ) \in \V
 		\bigg)\in L^{(2)}( \gamma(U, V), \V ) 
 		.
 		\end{split}
 		\end{equation}
 	\end{enumerate}
 \end{lemma}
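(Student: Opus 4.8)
\emph{Proof idea.} The plan is to establish all four assertions at once by a Gaussian randomisation argument. Since $U$ is separable, every orthonormal set in $U$ is at most countable, so fix a probability space carrying an i.i.d.\ sequence $(\gamma_n)_{n\in\N}$ of standard normal random variables. Given $A\in\gamma(U,V)$ and an orthonormal set $\mathbb{U}=\{u_n\}_{n\in I}\subseteq U$ with $I\subseteq\N$, write $P_{\mathbb{U}}\in L(U)$ for the orthogonal projection onto $\overline{\operatorname{span}(\mathbb{U})}$; by Lemma~\ref{lemma:multiplication_estimate} one has $AP_{\mathbb{U}}\in\gamma(U,V)$ with $\|AP_{\mathbb{U}}\|_{\gamma(U,V)}\le\|A\|_{\gamma(U,V)}$, and since $\mathbb{U}$ is an orthonormal basis of the range of $P_{\mathbb{U}}$ the Gaussian series $\smallsum_{n\in I}\gamma_n Au_n$ converges in $L^2(\Omega;V)$ --- unconditionally, by the It\^o--Nisio theorem --- to a centred $V$-valued Gaussian random variable $Y_{\mathbb{U}}(A)$ with $(\E\|Y_{\mathbb{U}}(A)\|_V^2)^{1/2}=\|AP_{\mathbb{U}}\|_{\gamma(U,V)}$. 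The key elementary identity is that, for every finite $G\subseteq\mathbb{U}$, bilinearity of $\beta$ together with $\E[\gamma_u\gamma_v]=1$ for $u=v$ and $\E[\gamma_u\gamma_v]=0$ otherwise gives $\smallsum_{u\in G}\beta(A_1u,A_2u)=\E\big[\beta\big(\smallsum_{u\in G}\gamma_uA_1u,\smallsum_{u\in G}\gamma_uA_2u\big)\big]$, the right-hand side being a well-defined Bochner expectation because $\beta$ is bounded bilinear.

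For (i) and (iii) I would combine this identity with $\|\E[\cdot]\|_{\V}\le\E\|\cdot\|_{\V}$, the bound $\|\beta(x,y)\|_{\V}\le\|\beta\|_{L^{(2)}(V,\V)}\|x\|_V\|y\|_V$ and the Cauchy--Schwarz inequality to obtain, for every finite $G\subseteq\mathbb{U}$,
\[
\big\|\smallsum_{u\in G}\beta(A_1u,A_2u)\big\|_{\V}
\le
\|\beta\|_{L^{(2)}(V,\V)}\,\|A_1P_G\|_{\gamma(U,V)}\,\|A_2P_G\|_{\gamma(U,V)}.
\]
If $F\subseteq\mathbb{U}$ is finite and $G\subseteq\mathbb{U}\setminus F$ is finite, then $P_G=(P_{\mathbb{U}}-P_F)P_G$, so Lemma~\ref{lemma:multiplication_estimate} gives $\|A_iP_G\|_{\gamma(U,V)}\le\|A_i(P_{\mathbb{U}}-P_F)\|_{\gamma(U,V)}$, and the latter can be made arbitrarily small by choosing $F$ large, since $\smallsum_{u\in\mathbb{U}}\gamma_uA_iu$ converges unconditionally in $L^2(\Omega;V)$. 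Hence the net of partial sums $\big(\smallsum_{u\in J}\beta(A_1u,A_2u)\big)_J$ over the finite subsets $J$ of $\mathbb{U}$ is Cauchy in the Banach space $\V$; its limit is the unique $v$ of assertion~(i), and letting $G\uparrow\mathbb{U}$ in the above displays (using $L^2$-convergence of $\smallsum_{u\in G}\gamma_uA_iu$ to $Y_{\mathbb{U}}(A_i)$ and boundedness of $\beta$) both identifies $v=\E[\beta(Y_{\mathbb{U}}(A_1),Y_{\mathbb{U}}(A_2))]$ and yields assertion~(iii).

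For (ii), let $\mathbb{U}_1,\mathbb{U}_2$ be orthonormal bases of $U$, so $P_{\mathbb{U}_1}=P_{\mathbb{U}_2}=\operatorname{Id}_U$. For $k\in\{1,2\}$ the pair $(Y_{\mathbb{U}_k}(A_1),Y_{\mathbb{U}_k}(A_2))$ is a centred Gaussian random variable with values in the separable Banach space $\overline{A_1(U)}\times\overline{A_2(U)}$, and Parseval's identity shows that for all $\phi,\psi$ in the dual of $V$ and all $i,j\in\{1,2\}$ its covariance $\E[\langle\phi,Y_{\mathbb{U}_k}(A_i)\rangle\langle\psi,Y_{\mathbb{U}_k}(A_j)\rangle]=\langle A_i^{*}\phi,A_j^{*}\psi\rangle_U$ is independent of $k$; since a centred Gaussian measure on a separable Banach space is determined by its covariance, the laws of $(Y_{\mathbb{U}_1}(A_1),Y_{\mathbb{U}_1}(A_2))$ and $(Y_{\mathbb{U}_2}(A_1),Y_{\mathbb{U}_2}(A_2))$ coincide, and therefore $\smallsum_{u\in\mathbb{U}_1}\beta(A_1u,A_2u)=\E[\beta(Y_{\mathbb{U}_1}(A_1),Y_{\mathbb{U}_1}(A_2))]=\E[\beta(Y_{\mathbb{U}_2}(A_1),Y_{\mathbb{U}_2}(A_2))]=\smallsum_{u\in\mathbb{U}_2}\beta(A_1u,A_2u)$. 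Finally, (iv) follows at once: bilinearity of $(A_1,A_2)\mapsto\smallsum_{u\in\mathbb{U}}\beta(A_1u,A_2u)$ is inherited from bilinearity of $\beta$, linearity of $A\mapsto Au$ and the convergence established in (i), while boundedness with the constant $\|\beta\|_{L^{(2)}(V,\V)}$ is precisely (iii).

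The step I expect to be the main obstacle --- indeed the only one that is not bookkeeping --- is the Gaussian representation together with its two limiting ingredients: the unconditional $L^2(\Omega;V)$-convergence of $\smallsum_u\gamma_uA_iu$ (which makes the tails $\|A_i(P_{\mathbb{U}}-P_F)\|_{\gamma(U,V)}$ vanish) and the passage of the bounded bilinear map $\beta$ through these $L^2$-limits; both are standard facts about $\gamma$-radonifying operators and Banach space valued Gaussian series, which is why the statement is an elementary extension of Brze\'zniak et al.\ \cite[Lemma~2.3]{BrzezniakVanNeervenVeraarWeis2008} (whose scalar-valued version, applied to $\langle\phi,\beta(\cdot,\cdot)\rangle$ for $\phi$ in the dual of $\V$, could alternatively be used for the weak-form reductions).
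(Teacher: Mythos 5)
Your argument is correct. Note, however, that the paper itself offers no proof of this lemma at all: it is stated as ``an elementary extension of Brze\'zniak et al.\ [Lemma~2.3]'' and left to that citation. What you have written is essentially a reconstruction of the standard proof behind that cited result, extended to the present formulation: the finite-sum identity $\smallsum_{u\in G}\beta(A_1u,A_2u)=\E[\beta(\smallsum_{u\in G}\gamma_uA_1u,\smallsum_{u\in G}\gamma_uA_2u)]$, the Cauchy--Schwarz bound giving the constant $\|\beta\|_{L^{(2)}(V,\V)}\|A_1\|_{\gamma(U,V)}\|A_2\|_{\gamma(U,V)}$, and basis-independence via the fact that a centred Gaussian law on a separable Banach space is determined by its covariance. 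The two genuinely new points relative to the Hilbert-space-basis version --- handling arbitrary orthonormal sets and net (unconditional) convergence in item~(i) --- you treat correctly by passing to $AP_{\mathbb U}$ through the ideal property (Lemma~\ref{lemma:multiplication_estimate}) and by the tail estimate $\|A_iP_G\|_{\gamma(U,V)}\le\|A_i(P_{\mathbb U}-P_F)\|_{\gamma(U,V)}$, whose vanishing rests on the unconditional $L^2$-convergence of Gaussian series (It\^o--Nisio plus the contraction principle); these are indeed standard and legitimately quotable. The only places where the sketch leans on unstated routine verifications are the Bochner integrability of $\beta(Y_{\mathbb U}(A_1),Y_{\mathbb U}(A_2))$ (covered by Cauchy--Schwarz as you indicate) and, in item~(ii), that the covariance computation must be done for the \emph{joint} law of the pair $(Y_{\mathbb U_k}(A_1),Y_{\mathbb U_k}(A_2))$, which you do state; so there is no gap, and your write-up in fact supplies more detail than the paper does.
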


\subsection{Convergence properties of measurable functions}
\begin{lemma}[A characterization for convergence in measure]
	\label{lemma:con_measure}
	Let $ ( \Omega, \mathcal{F}, \nu ) $
	be a finite measure space and let $ R_n \colon \Omega \to \R $,
	$ n \in \N = \{ 1, 2,\ldots \} $, 
	be $ \mathcal{F}/\B(\R) $-measurable
	functions.
	Then the following two statements
	are equivalent:
	\begin{enumerate}[(i)]
		\item It holds that
		\begin{equation}
		\limsup_{n\to \infty}
		\int_\Omega \min \{ 1, |R_n| \} \, d \nu = 0.
		\end{equation}
		\item For every strictly increasing function $ n \colon \N \to \N $
		there exists a strictly increasing function $ m \colon \N \to \N $
		such that
		\begin{equation}
		\nu \bigg( \bigg\{ 
		\omega \in \Omega \colon \limsup_{ k \to \infty }
		| R_{n(m(k))} ( \omega ) | > 0
		\bigg\}\bigg) = 0.
		\end{equation}
		\end{enumerate}
		\end{lemma}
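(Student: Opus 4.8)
The plan is to establish the two implications separately, in both cases relying on the elementary fact that a sequence $(a_j)_{j\in\N}\subseteq\R$ converges to a limit $a\in\R$ if and only if every subsequence of $(a_j)_{j\in\N}$ admits a further subsequence converging to $a$, combined with the monotone and dominated convergence theorems. I would first record that $0\le\int_\Omega\min\{1,|R_n|\}\,d\nu\le\nu(\Omega)<\infty$ for every $n\in\N$, so that the $\limsup$ in item~(i) is a well-defined element of $[0,\nu(\Omega)]$; the finiteness of $\nu$ will be used once more below.

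For the implication (i)$\Rightarrow$(ii) I would assume $\limsup_{n\to\infty}\int_\Omega\min\{1,|R_n|\}\,d\nu=0$ and fix a strictly increasing $n\colon\N\to\N$. Since $\lim_{j\to\infty}\int_\Omega\min\{1,|R_{n(j)}|\}\,d\nu=0$ as well, one can recursively choose a strictly increasing $m\colon\N\to\N$ with $\int_\Omega\min\{1,|R_{n(m(k))}|\}\,d\nu\le 2^{-k}$ for all $k\in\N$. By the monotone convergence theorem it then holds that $\int_\Omega\bigl(\smallsum_{k\in\N}\min\{1,|R_{n(m(k))}|\}\bigr)d\nu=\smallsum_{k\in\N}\int_\Omega\min\{1,|R_{n(m(k))}|\}\,d\nu\le 1<\infty$, so the $[0,\infty]$-valued $\mathcal{F}/\B([0,\infty])$-measurable function $\smallsum_{k\in\N}\min\{1,|R_{n(m(k))}|\}$ is $\nu$-a.e.\ finite, whence its summands, and therefore also $|R_{n(m(k))}|$, converge to $0$ for $\nu$-a.e.\ $\omega\in\Omega$. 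As $\omega\mapsto\limsup_{k\to\infty}|R_{n(m(k))}(\omega)|$ is $\mathcal{F}/\B([0,\infty])$-measurable and vanishes $\nu$-a.e., this is exactly item~(ii).

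For the implication (ii)$\Rightarrow$(i) I would argue by contradiction: if $c:=\limsup_{n\to\infty}\int_\Omega\min\{1,|R_n|\}\,d\nu>0$, then I would pick a strictly increasing $n\colon\N\to\N$ with $\int_\Omega\min\{1,|R_{n(j)}|\}\,d\nu\ge c/2$ for every $j\in\N$ and apply item~(ii) to obtain a strictly increasing $m\colon\N\to\N$ with $\nu(\{\omega\in\Omega\colon\limsup_{k\to\infty}|R_{n(m(k))}(\omega)|>0\})=0$, i.e.\ $R_{n(m(k))}\to 0$ and hence $\min\{1,|R_{n(m(k))}|\}\to 0$ for $\nu$-a.e.\ $\omega\in\Omega$. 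Since $\min\{1,|R_{n(m(k))}|\}\le 1$ and the constant function $1$ is $\nu$-integrable (because $\nu$ is finite), the dominated convergence theorem yields $\lim_{k\to\infty}\int_\Omega\min\{1,|R_{n(m(k))}|\}\,d\nu=0$, contradicting the lower bound $c/2$; therefore $c=0$, which is item~(i).

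The argument presents no real obstacle; the only points demanding a little care are the measurability of $\omega\mapsto\limsup_{k\to\infty}|R_{n(m(k))}(\omega)|$ (so that the set in item~(ii) genuinely lies in $\mathcal{F}$) and keeping track of where the finiteness of $\nu$ enters—once to ensure the $\limsup$ in item~(i) is a real number so that the contradiction argument is meaningful, and once to supply the integrable dominating function in the proof of (ii)$\Rightarrow$(i).
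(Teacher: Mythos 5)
Your argument is correct: the choice of a further subsequence with summable integrals plus monotone convergence gives (i)$\Rightarrow$(ii), and the subsequence-of-a-subsequence contradiction with dominated convergence (using $\nu(\Omega)<\infty$ for the dominating constant) gives (ii)$\Rightarrow$(i); the measurability of $\omega\mapsto\limsup_{k\to\infty}|R_{n(m(k))}(\omega)|$ is indeed all that is needed for the set in (ii) to lie in $\mathcal{F}$. The paper states this lemma without proof, so there is nothing to compare against; your proof is the standard one for this characterization of convergence in measure and fills that gap adequately.
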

		\begin{lemma}
			\label{lemma:convergence}
			%
			% Consider the notation in Section~\ref{sec:notation},
			Let $ ( \Omega, \mathcal{F}, \nu ) $
			be a finite measure space,
			let $ (E, d ) $ and $ ( \mathcal{E}, \delta ) $
			be separable pseudometric spaces,
			let $ \phi \colon E \to \mathcal{E} $
			be a continuous function,
			and let
			$ f_n \colon \Omega \to E $, $ n \in \{ 0,1, 2, \ldots \} $,
			be $ \mathcal{F}/\B(E) $-measurable functions
			which
			satisfy
			\begin{equation} 
			\limsup_{n \to \infty} 
			\int_\Omega \min \{ 1, d (f_n, f_0 ) \} \, d\nu = 0 
			.
			\end{equation}
			Then 
			\begin{equation}
			\limsup_{n \to \infty}
			\int_\Omega 
			\min \{ 1, \delta( \phi\circ f_n, \phi \circ f_0 ) \} \, d\nu = 0 
			.
			\end{equation}
			\end{lemma}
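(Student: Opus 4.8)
\emph{Proof plan.}
The plan is to reduce the statement to the subsequence characterization of convergence in measure provided by Lemma~\ref{lemma:con_measure}. First I would record the elementary measurability facts. Since $ \phi $ is continuous, it is $ \mathcal{B}(E)/\mathcal{B}(\mathcal{E}) $-measurable, so each $ \phi \circ f_n \colon \Omega \to \mathcal{E} $ is $ \mathcal{F}/\mathcal{B}(\mathcal{E}) $-measurable; moreover, separability of $ (E,d) $ and $ (\mathcal{E},\delta) $ ensures that $ \mathcal{B}(E \times E) = \mathcal{B}(E) \otimes \mathcal{B}(E) $ and $ \mathcal{B}(\mathcal{E} \times \mathcal{E}) = \mathcal{B}(\mathcal{E}) \otimes \mathcal{B}(\mathcal{E}) $, whence continuity of $ d $ and $ \delta $ yields that the nonnegative functions $ \tilde R_n := d(f_n, f_0) \colon \Omega \to [0,\infty) $ and $ R_n := \delta(\phi \circ f_n, \phi \circ f_0) \colon \Omega \to [0,\infty) $, $ n \in \{1,2,\ldots\} $, are $ \mathcal{F}/\mathcal{B}(\R) $-measurable. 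This puts us in a position to apply Lemma~\ref{lemma:con_measure} to the sequences $ (\tilde R_n)_{n \in \N} $ and $ (R_n)_{n \in \N} $, noting that $ \min\{1, \tilde R_n\} = \min\{1, |\tilde R_n|\} $ and $ \min\{1, R_n\} = \min\{1, |R_n|\} $ by nonnegativity.

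Next I would exploit the hypothesis. By assumption $ \limsup_{n \to \infty} \int_\Omega \min\{1, |\tilde R_n|\} \, d\nu = 0 $, so item~(i) of Lemma~\ref{lemma:con_measure} holds for $ (\tilde R_n)_{n \in \N} $, and hence so does item~(ii): for every strictly increasing $ n \colon \N \to \N $ there is a strictly increasing $ m \colon \N \to \N $ with
\begin{equation*}
\nu\bigl( \{ \omega \in \Omega \colon \textstyle\limsup_{k \to \infty} |\tilde R_{n(m(k))}(\omega)| > 0 \} \bigr) = 0 ,
\end{equation*}
i.e.\ $ d(f_{n(m(k))}(\omega), f_0(\omega)) \to 0 $ as $ k \to \infty $ for $ \nu $-almost every $ \omega \in \Omega $.

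The key step is then to transfer this almost-everywhere convergence through $ \phi $. Fix $ \omega \in \Omega $ with $ d(f_{n(m(k))}(\omega), f_0(\omega)) \to 0 $. Since $ \phi $ is continuous between the pseudometric spaces $ (E,d) $ and $ (\mathcal{E},\delta) $, the sequential characterization of continuity gives $ \delta(\phi(f_{n(m(k))}(\omega)), \phi(f_0(\omega))) \to 0 $, that is $ \limsup_{k \to \infty} |R_{n(m(k))}(\omega)| = 0 $. As the exceptional set is contained in a set of $ \nu $-measure zero, item~(ii) of Lemma~\ref{lemma:con_measure} holds for $ (R_n)_{n \in \N} $ (for the given $ n $ the same $ m $ works). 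Invoking the equivalence in Lemma~\ref{lemma:con_measure} in the reverse direction, item~(i) holds for $ (R_n)_{n \in \N} $, which is precisely the claimed identity $ \limsup_{n \to \infty} \int_\Omega \min\{1, \delta(\phi \circ f_n, \phi \circ f_0)\} \, d\nu = 0 $.

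I do not expect a serious obstacle here: the argument is a clean two-way application of Lemma~\ref{lemma:con_measure} sandwiching the pointwise continuity of $ \phi $. The only point requiring a little care is the measurability bookkeeping in the first paragraph — in particular using separability of $ E $ and $ \mathcal{E} $ to identify the Borel $ \sigma $-algebra of the product with the product $ \sigma $-algebra, so that $ \omega \mapsto (f_n(\omega), f_0(\omega)) $ and $ \omega \mapsto (\phi(f_n(\omega)), \phi(f_0(\omega))) $ are measurable into the respective products and hence $ \tilde R_n $ and $ R_n $ are genuine real-valued measurable functions to which Lemma~\ref{lemma:con_measure} applies.
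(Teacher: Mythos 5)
Your proposal is correct and follows essentially the same route as the paper: a two-way application of Lemma~\ref{lemma:con_measure}, with the pointwise continuity of $\phi$ transferring the almost-everywhere convergence along subsequences. The measurability bookkeeping via separability that you spell out is left implicit in the paper but is a sound addition.
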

			\begin{proof}[Proof of Lemma~\ref{lemma:convergence}]
				Observe that Lemma~\ref{lemma:con_measure}
				and the assumption that
				\begin{equation} 
				\limsup_{n \to \infty} 
				\int_\Omega \min \{ 1, d (f_n, f_0 ) \} \, d\nu = 0 
				\end{equation}
				ensure that for every
				strictly increasing function $ n \colon \N \to \N $
				there exists a strictly increasing function
				$ m \colon \N \to \N $
				such that
				\begin{equation}
				\nu \bigg(
				\bigg\{
				\omega \in \Omega \colon
				\limsup_{ k \to \infty }
				d ( f_{n(m(k))}(\omega) , f_0(\omega ) ) > 0
				\bigg\}
				\bigg)
				=
				0.
				\end{equation}
				The assumption that $ \phi $ is continuous hence shows that
				for every strictly increasing
				function
				$ n \colon \N \to \N $
				there exists
				a strictly increasing
				function $ m \colon \N \to \N $
				such that
				\begin{equation}
				\nu \bigg(
				\bigg\{
				\omega \in \Omega \colon
				\limsup_{ k \to \infty }
				\delta ( \phi ( f_{n(m(k)) } (\omega) ) , \phi( f_0(\omega ) ) ) > 0
				\bigg\}
				\bigg)
				=
				0.
				\end{equation}
				Combining this with
				Lemma~\ref{lemma:con_measure}
				completes the proof of Lemma~\ref{lemma:convergence}.
				\end{proof}
				\begin{corollary}
					\label{corollary:convergence}
					%
					%	Consider the notation in Section~\ref{sec:notation},
					Let $ ( \Omega, \mathcal{F}, \nu ) $
					be a finite measure space,
					let $ (E, d ) $ and $ ( \mathcal{E}, \delta ) $
					be separable pseudometric spaces,
					let $ p, q \in (0,\infty) $,
					let $ \phi \colon E \to \mathcal{E} $
					be a continuous and globally bounded function,
					and let
					$ f_n \colon \Omega \to E $,
					$ n \in \{0, 1, 2, \ldots \} $,
					be
					$ \mathcal{F} / \B(E ) $-measurable functions which
					satisfy
					$ \limsup_{ n \to \infty }
					\int_{\Omega} | d (f_n, f_0) |^p \, d\nu = 0 $.
					Then
					\begin{equation}
					\limsup_{ n \to \infty }
					\int_{\Omega} | \delta ( \phi \circ f_n, \phi \circ f_0 ) |^q \, d\nu = 0.
					\end{equation}
					\end{corollary}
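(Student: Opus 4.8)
The plan is to reduce the $q$-th moment convergence of the images $\phi\circ f_n$ to convergence in $\nu$-measure, which is precisely the regime handled by Lemma~\ref{lemma:con_measure} and Lemma~\ref{lemma:convergence}, and then to re-upgrade from convergence in measure back to the $q$-th moment using the global boundedness of $\phi$. So the argument splits into three short steps: pass from the hypothesis to a ``$\min\{1,\cdot\}$''-statement, transport it through $\phi$, and strip off the truncation.

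\textbf{Step 1.} First I would observe that $\min\{1,t\}\le t^{\min\{1,p\}}$ for every $t\in[0,\infty)$, and that, since $\nu(\Omega)<\infty$ (the case $\nu(\Omega)=0$ being trivial), Jensen's inequality applied to the probability measure $\nu/\nu(\Omega)$ and the concave function $[0,\infty)\ni s\mapsto s^{\min\{1,p\}/p}$ yields
\begin{equation}
\int_\Omega \min\{1,d(f_n,f_0)\}\,d\nu
\le \int_\Omega |d(f_n,f_0)|^{\min\{1,p\}}\,d\nu
\le [\nu(\Omega)]^{\,1-\min\{1,p\}/p}\bigg(\int_\Omega |d(f_n,f_0)|^p\,d\nu\bigg)^{\!\min\{1,p\}/p}.
\end{equation}
Hence the hypothesis $\limsup_{n\to\infty}\int_\Omega|d(f_n,f_0)|^p\,d\nu=0$ implies $\limsup_{n\to\infty}\int_\Omega\min\{1,d(f_n,f_0)\}\,d\nu=0$, which is exactly the input required by Lemma~\ref{lemma:convergence}.

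\textbf{Step 2.} Applying Lemma~\ref{lemma:convergence} with the continuous function $\phi\colon E\to\mathcal{E}$ gives $\limsup_{n\to\infty}\int_\Omega\min\{1,\delta(\phi\circ f_n,\phi\circ f_0)\}\,d\nu=0$. Feeding this into the implication $(i)\Rightarrow(ii)$ of Lemma~\ref{lemma:con_measure} (applied to the nonnegative measurable functions $R_n=\delta(\phi\circ f_n,\phi\circ f_0)$) shows that for every strictly increasing $n\colon\N\to\N$ there is a strictly increasing $m\colon\N\to\N$ with $\delta(\phi\circ f_{n(m(k))},\phi\circ f_0)\to0$ $\nu$-a.e.\ as $k\to\infty$. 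Since $[0,\infty)\ni s\mapsto s^q$ is continuous and vanishes at $0$, this forces $|\delta(\phi\circ f_{n(m(k))},\phi\circ f_0)|^q\to0$ $\nu$-a.e., i.e.\ statement~(ii) of Lemma~\ref{lemma:con_measure} holds for the functions $\Omega\ni\omega\mapsto|\delta(\phi(f_n(\omega)),\phi(f_0(\omega)))|^q$; hence the implication $(ii)\Rightarrow(i)$ of that lemma yields $\limsup_{n\to\infty}\int_\Omega\min\{1,|\delta(\phi\circ f_n,\phi\circ f_0)|^q\}\,d\nu=0$.

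\textbf{Step 3.} Finally, to remove the truncation I would use that $\phi$ is globally bounded: fixing $M\in[0,\infty)$ with $\delta(\phi(x),\phi(y))\le M$ for all $x,y\in E$ (possible since $\phi(E)$ has finite $\delta$-diameter), one has the pointwise bound $|\delta(\phi\circ f_n,\phi\circ f_0)|^q\le\max\{1,M^q\}\min\{1,|\delta(\phi\circ f_n,\phi\circ f_0)|^q\}$, and therefore
\begin{equation}
\limsup_{n\to\infty}\int_\Omega|\delta(\phi\circ f_n,\phi\circ f_0)|^q\,d\nu
\le \max\{1,M^q\}\limsup_{n\to\infty}\int_\Omega\min\{1,|\delta(\phi\circ f_n,\phi\circ f_0)|^q\}\,d\nu=0,
\end{equation}
which is the assertion of Corollary~\ref{corollary:convergence}. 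The only non-cosmetic points are the measurability of $d(f_n,f_0)$ and of $\delta(\phi\circ f_n,\phi\circ f_0)$ (which follows from continuity of $d$, $\delta$, and $\phi$, from separability of $E$ and $\mathcal{E}$ so that the Borel $\sigma$-algebra of each product coincides with the product $\sigma$-algebra, and from measurability of the $f_n$) and the elementary bookkeeping of the exponent $\min\{1,p\}$ in the two regimes $p<1$ and $p\ge1$; I do not expect a genuine obstacle, since the substantive analytic content — the subsequence principle underlying convergence in measure — is already packaged in Lemma~\ref{lemma:con_measure} and Lemma~\ref{lemma:convergence}, and what remains is a short measure-theoretic wrap-up.
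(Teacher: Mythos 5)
Your proof is correct and follows essentially the same route as the paper's: reduce the hypothesis to the truncated statement $\limsup_{n\to\infty}\int_\Omega\min\{1,d(f_n,f_0)\}\,d\nu=0$, push it through Lemma~\ref{lemma:convergence}, compose with $x\mapsto|x|^q$ (which the paper does by a second appeal to Lemma~\ref{lemma:convergence}, you by unwinding Lemma~\ref{lemma:con_measure} directly), and finally remove the truncation using the global boundedness of $\phi$. The only cosmetic deviations are Jensen in place of H\"older in your Step 1 and your elementary pointwise bound $|\delta(\phi\circ f_n,\phi\circ f_0)|^q\le\max\{1,M^q\}\min\{1,|\delta(\phi\circ f_n,\phi\circ f_0)|^q\}$ in place of the paper's citation of Klenke's Corollary~6.26 in the last step.
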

					\begin{proof}[Proof of Corollary~\ref{corollary:convergence}]
						Observe that the assumption that
						$ \limsup_{ n \to \infty }
						\int_{\Omega} | d (f_n, f_0) |^p \, d\nu = 0 $
						and H\"older's inequality
						ensure that
						$ \limsup_{ n \to \infty }
						\int_{\Omega} | d (f_n, f_0) |^{ \min \{ p, 1 \} } \, d\nu = 0 $.
						Hence, we obtain that
						\begin{equation}
						\limsup_{ n \to \infty }
						\int_{\Omega}
						\min \{ 1, d(f_n, f_0 ) \} \, d\nu
						\leq
						\limsup_{ n \to \infty }
						\int_{\Omega}
						| \min \{ 1, d(f_n, f_0) \} |^{ \min \{p,1 \}} \, d\nu
						=
						0.
						\end{equation}
						This allows us to apply Lemma~\ref{lemma:convergence}
						to obtain that
						\begin{equation}
						\limsup_{ n \to \infty }
						\int_{\Omega}
						\min \{ 1, \delta ( \phi \circ f_n, \phi \circ f_0 ) \} \, d \nu = 0.
						\end{equation}
						The fact that the function
						$ [0,\infty) \ni x \mapsto | x |^q \in [0,\infty) $
						is continuous and again Lemma~\ref{lemma:convergence} 
						hence show that
						\begin{equation}
						\limsup_{ n \to \infty }
						\int_{\Omega}
						\min \{ 1, | \delta( \phi \circ f_n, \phi \circ f_0 ) |^q \} \, d \nu = 0.
						\end{equation}
						Combining this and, e.g., 
						Klenke~\cite[Corollary~6.26]{Klenke2008}
						with the fact that
						$ \sup( \{ | \delta( ( \phi \circ f_n ) ( \omega ),
						( \phi \circ f_0 )( \omega ) ) |^p
						\colon 
						\omega \in \Omega, n \in \N 
						\}
						\cup \{ 0 \}
						) < \infty $
						ensures that
						\begin{equation}
						\limsup_{ n \to \infty }
						\int_{\Omega}
						| \delta( \phi \circ f_n, \phi \circ f_0 ) |^q \, d\nu = 0.
						\end{equation}
						The proof of Corollary~\ref{corollary:convergence}
						it thus completed.
						\end{proof}
\subsection{Regular test functions}
\label{subsection:RegularTestFunctions}
\begin{proposition}
	\label{proposition:nonlinearityF}
	Consider the notation in Subsection~\ref{sec:notation},
	let $ k, l, d, n \in \N $,
	$ p \in [1, \infty) $,
	$ q \in ( n p, \infty) $,
	let
	$ \O \in \mathcal{B}( \R^d ) $
	be a bounded set,
	let $ f \colon \R^k \to \R^l $
	be an $ n $-times continuously differentiable
	function with globally bounded  
	derivatives, 
	and
	let $ F \colon L^q(\lambda_\O; \R^k ) \to L^p(\lambda_\O; \R^l ) $
	be the function which satisfies for all $ v \in \L^q(\lambda_\O; \R^k ) $ that
	\begin{equation}
	F ( [ v ]_{\lambda_\O, \B(\R^l)} ) 
	=
	[ \{ f ( v ( x ) ) \}_{x \in \O} ]_{ \lambda_\O, \B(\R^l) }
	=
	[ f \circ v ]_{ \lambda_\O, \B(\R^l) }
	. 
	\end{equation}
	Then
	\begin{enumerate}[(i)]
		\item \label{item:1} it holds that $ F $ is $ n $-times continuously Fr\'echet differentiable
		with globally bounded derivatives,
		\item \label{item:2} it holds for all
		$ m \in \{ 1, 2, \ldots, n \} $,
		$ v, u_1, \ldots, u_m \in \L^q(\lambda_\O; \R^k ) $
		that
		\begin{equation}
		\begin{split}
		&
		F^{( m )}( [ v ]_{\lambda_\O, \B(\R^k ) } )
		( [ u_1 ]_{\lambda_\O, \B(\R^k ) }, \ldots, 
		[ u_m ]_{\lambda_\O, \B(\R^k ) } )
		\\
		&
		=
		[ \{ f^{(m)}(v(x)) ( u_1(x), \ldots, u_m(x) ) \}_{ x \in \O } ]_{\lambda_\O, \B(\R^l)}
		,
		\end{split}
		\end{equation}
		\item \label{item:3}  it holds for all
		$ m \in \{ 1, 2, \ldots, n \} $, $ r \in [mp, \infty) $ 
		that
		\begin{equation}
		\begin{split}
		&
		\sup_{ v \in L^q( \lambda_\O; \R^k ) }
		\sup_{
			u_1, \ldots, u_m \in L^{ \max \{r, q \} }( \lambda_\O; \R^k ) \backslash \{ 0 \}
		}
		\Bigg[
		\frac{
			\| F^{(m)}(v) (u_1, \ldots, u_m ) \|_{L^p(\lambda_\O; \R^l ) 
			}}{
			\| u_1 \|_{L^r(\lambda_\O; \R^k )}
			\cdot\ldots\cdot
			\| u_m \|_{L^r(\lambda_\O; \R^k )}
		}
		\Bigg]
		\\
		&
		\leq
		\bigg[
		\sup_{ x \in \R^k }
		\| f^{(m)}(x) \|_{L^{(m)}(\R^k, \R^l )}
		\bigg]
		[ \lambda_{\R^d}( \O ) ]^{[ \frac{1}{p} - \frac{m}{r} ]}
		< \infty
		,
		\end{split}
		\end{equation}
		\item \label{item:4} it holds for all $ m \in \{ 1, 2, \ldots, n \} $,
		$ r, s \in (p,\infty) $, 
		$ v, w \in L^{\max\{r,q\}}( \lambda_\O; \R^k ) $
		with 
		$ \frac{1}{r}+ \frac{m}{s} \leq \frac{1}{p} $
		that
		\begin{equation}
		\begin{split}
		& 
		\sup_{  
			u_1, \ldots, u_m \in L^{ \max \{s, q\}}( \lambda_\O; \R^k )\backslash \{0\} 
		}
		\Bigg[
		\frac{
			\| ( F^{(m)}(v) - F^{(m)}(w) ) ( u_1, \ldots, u_m ) \|_{   L^p(\lambda_\O; \R^l )  } 
		}
		{
			\| u_1 \|_{ L^{ s  } (\lambda_\O; \R^k) }
			\cdot
			\ldots
			\cdot
			\| u_m \|_{ L^{ s  } (\lambda_\O; \R^k) }	
		}
		\Bigg]
		\\
		&
		\leq
		\Bigg[
		\sup_{\substack{ x,y \in \R^k, \\ x \neq y }}
		\frac{
			\| f^{(m)}(x) - f^{(m)}(y) \|_{L^{(m)}(\R^k, \R^l)}
		}{
		\| x - y \|_{\R^k}
	}
	\Bigg]
	[ \lambda_{\R^d}( \O ) ]^{ [ \frac{1}{p} - \frac{1}{r} - \frac{m}{s} ]}
	\|  v - w \|_{ L^r( \lambda_\O; \R^k) } 
	%	\prod_{i=1}^{m}
	%	\| u_i \|_{ L^{ s  } (\lambda_\O; \R^k) }
	%
	% 
	,
	\end{split}
	\end{equation}
	and
	\item \label{item:5} 
	it holds for all $ m \in \{ 1, 2, \ldots, n \} $,
	$ r \in [(m+1)p,\infty) $, 
	$ v, w \in L^{\max\{r, q\}}( \lambda_\O; \R^k ) $
	that
	\begin{equation}
	\begin{split}
	& 
	\sup_{  
		u_1, \ldots, u_m \in L^{\max\{r,q\}}( \lambda_\O; \R^k )\backslash \{0\} 
	}
	\Bigg[
	\frac{
		\| ( F^{(m)}(v) - F^{(m)}(w) ) ( u_1, \ldots, u_m ) \|_{   L^p(\lambda_\O; \R^l )  } 
	}
	{
		\| u_1 \|_{ L^{ r } (\lambda_\O; \R^k) }
		\cdot
		\ldots
		\cdot
		\| u_m \|_{ L^{ r } (\lambda_\O; \R^k) }	
	}
	\Bigg]
	\\
	&
	\leq
	\Bigg[
	\sup_{\substack{ x,y \in \R^k, \\ x\neq y }}
	\frac{
		\| f^{(m)}(x) - f^{(m)}(y) \|_{L^{(m)}(\R^k, \R^l)}
	}{
	\| x - y \|_{\R^k}
}
\Bigg]
[ \lambda_{\R^d}( \O ) ]^{ [ \frac{1}{p} - \frac{m+1}{r} ]}
\|  v - w \|_{ L^r( \lambda_\O; \R^k) }  
.
\end{split}
\end{equation}
\end{enumerate}
\end{proposition}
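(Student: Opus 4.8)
\emph{Plan of proof.} The strategy is to establish items~(i) and~(ii) simultaneously by induction, and then to read off items~(iii), (iv) and~(v) from the explicit formula in~(ii) by means of the generalized H\"older inequality on the finite measure space $(\O,\B(\O),\lambda_\O)$ (finite since $\O$ is bounded, so that $\lambda_{\R^d}(\O)<\infty$), together with the elementary embedding $L^a(\lambda_\O)\hookrightarrow L^b(\lambda_\O)$, $\|g\|_{L^b(\lambda_\O;\R)}\le[\lambda_{\R^d}(\O)]^{\frac1b-\frac1a}\|g\|_{L^a(\lambda_\O;\R)}$, valid for $\infty\ge a\ge b>0$. Throughout, the arithmetic fact that $q>np\ge mp$ for every $m\in\{1,\dots,n\}$ is what guarantees that all the dual exponents appearing below are strictly positive. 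First I would record that $F$ is well defined: since $f'$ is globally bounded, $f$ is globally Lipschitz, hence $\|f(v(x))\|_{\R^l}\le\|f(0)\|_{\R^l}+[\sup_y\|f'(y)\|_{L(\R^k,\R^l)}]\|v(x)\|_{\R^k}$, and then $q\ge p$ and $\lambda_{\R^d}(\O)<\infty$ give $f\circ v\in\L^p(\lambda_\O;\R^l)$ for every $v\in\L^q(\lambda_\O;\R^k)$; independence of the representative is immediate.

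Granting the formula in~(ii), I would prove~(iii)--(v) by pointwise estimates followed by H\"older. For~(iii): $\|f^{(m)}(v(x))(u_1(x),\dots,u_m(x))\|_{\R^l}\le[\sup_y\|f^{(m)}(y)\|_{L^{(m)}(\R^k,\R^l)}]\prod_{j=1}^m\|u_j(x)\|_{\R^k}$, and since $r\ge mp$ the product $\prod_j\|u_j\|$ lies in $L^{r/m}(\lambda_\O;\R)\subseteq L^p(\lambda_\O;\R)$ with norm $\le[\lambda_{\R^d}(\O)]^{\frac1p-\frac mr}\prod_j\|u_j\|_{L^r(\lambda_\O;\R^k)}$; note $u_j\in L^{\max\{r,q\}}\subseteq L^q$, so the formula of~(ii) applies and $\|u_j\|_{L^r}<\infty$. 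For~(iv): $\|(f^{(m)}(v(x))-f^{(m)}(w(x)))(u_1(x),\dots,u_m(x))\|_{\R^l}$ is bounded by $[\sup_{x\ne y}\tfrac{\|f^{(m)}(x)-f^{(m)}(y)\|_{L^{(m)}(\R^k,\R^l)}}{\|x-y\|_{\R^k}}]\,\|v(x)-w(x)\|_{\R^k}\prod_{j=1}^m\|u_j(x)\|_{\R^k}$, and since $\frac1r+\frac ms\le\frac1p$, H\"older puts $\|v-w\|\prod_j\|u_j\|$ into $L^t(\lambda_\O;\R)\subseteq L^p(\lambda_\O;\R)$ with $\frac1t=\frac1r+\frac ms$, which yields the claimed bound (trivially so, both sides read in $[0,\infty]$, when the Lipschitz seminorm of $f^{(m)}$ is infinite --- which can happen only for $m=n$, since for $m<n$ it is $\le\sup_y\|f^{(m+1)}(y)\|_{L^{(m+1)}(\R^k,\R^l)}<\infty$). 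Finally~(v) is the case $s=r$ of~(iv), legitimate since $r\ge(m+1)p>p$.

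For items~(i) and~(ii) I would prove, by induction on $m\in\{1,\dots,n\}$, that $F$ is $m$-times continuously Fr\'echet differentiable and that, for $1\le j\le m$, the derivative $F^{(j)}$ is given by the formula in~(ii); write $L^q=L^q(\lambda_\O;\R^k)$, $L^p=L^p(\lambda_\O;\R^l)$, and use the convention $F^{(0)}=F$ (merely known to be well defined). In the step from $m-1$ to $m$, let $G_m(v)$ denote the candidate symmetric $m$-linear map of~(ii); the H\"older estimate underlying~(iii) with $r=q$ shows $G_m(v)\in L^{(m)}(L^q,L^p)$ with $\|G_m(v)\|_{L^{(m)}(L^q,L^p)}\le[\sup_y\|f^{(m)}(y)\|_{L^{(m)}(\R^k,\R^l)}][\lambda_{\R^d}(\O)]^{\frac1p-\frac mq}$ uniformly in $v$, which already yields the ``globally bounded derivatives'' clause of~(i). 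To differentiate $F^{(m-1)}$ at $v$, apply the fundamental theorem of calculus to $t\mapsto f^{(m-1)}(v(x)+th(x))$: pointwise in $x\in\O$,
\[
\big\|[F^{(m-1)}(v+h)-F^{(m-1)}(v)-G_m(v)h](u_1,\dots,u_{m-1})(x)\big\|_{\R^l}\le\Big(\int_0^1\|f^{(m)}(v(x)+th(x))-f^{(m)}(v(x))\|_{L^{(m)}(\R^k,\R^l)}\,dt\Big)\|h(x)\|_{\R^k}\prod_{j=1}^{m-1}\|u_j(x)\|_{\R^k},
\]
so that two (or, for $m=1$, one) applications of H\"older's inequality --- first pairing $\prod_{j=1}^{m-1}\|u_j\|$ against the remaining factors, then splitting off $\|h\|\in L^q$, both valid because $q>mp$ --- give
\[
\frac{\|F^{(m-1)}(v+h)-F^{(m-1)}(v)-G_m(v)h\|_{L^{(m-1)}(L^q,L^p)}}{\|h\|_{L^q}}\le\Big\|\O\ni x\mapsto\int_0^1\|f^{(m)}(v(x)+th(x))-f^{(m)}(v(x))\|_{L^{(m)}(\R^k,\R^l)}\,dt\Big\|_{L^b(\lambda_\O;\R)},
\]
where $\frac1b=\frac1p-\frac mq>0$. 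Since the map $\R^k\times\R^k\ni(y,z)\mapsto\int_0^1\|f^{(m)}(y+tz)-f^{(m)}(y)\|_{L^{(m)}(\R^k,\R^l)}\,dt\in[0,\infty)$ is continuous (dominated convergence, using continuity of $f^{(m)}$), globally bounded by $2\sup_y\|f^{(m)}(y)\|_{L^{(m)}(\R^k,\R^l)}$, and vanishes on $\{z=0\}$, applying Corollary~\ref{corollary:convergence} to the $(\R^k\times\R^k)$-valued functions $\O\ni x\mapsto(v(x),h_n(x))$ and $\O\ni x\mapsto(v(x),0)$ (note $\int_\O\|h_n(x)\|^q\,d\lambda_\O=\|h_n\|_{L^q}^q\to0$) shows that the right-hand side above tends to $0$ along any sequence $h_n\to0$ in $L^q$; hence $F^{(m)}(v)=G_m(v)$. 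Continuity of $F^{(m)}=G_m\colon L^q\to L^{(m)}(L^q,L^p)$ follows in the same way: $\|G_m(w)-G_m(v)\|_{L^{(m)}(L^q,L^p)}\le\big\|\O\ni x\mapsto\|f^{(m)}(w(x))-f^{(m)}(v(x))\|_{L^{(m)}(\R^k,\R^l)}\big\|_{L^c(\lambda_\O;\R)}$ with $\frac1c=\frac1p-\frac mq>0$, and Corollary~\ref{corollary:convergence} applied to the continuous, globally bounded, diagonal-vanishing map $(y,z)\mapsto\|f^{(m)}(y)-f^{(m)}(z)\|_{L^{(m)}(\R^k,\R^l)}$ forces this to $0$ as $w\to v$ in $L^q$. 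This closes the induction.

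The main obstacle is the induction step in the regime $m=n$: there $f^{(n)}$ is only continuous and globally bounded, so it is in general neither Lipschitz nor uniformly continuous, and the classical modulus-of-continuity argument collapses; the limiting argument must instead be routed through convergence in measure, i.e.\ through Lemma~\ref{lemma:convergence}/Corollary~\ref{corollary:convergence} (which is exactly why those lemmas precede this proposition). The other point requiring care is the bookkeeping of the H\"older exponents so that the indices $\frac1p-\frac mq$, $\frac1p-\frac{m-1}q$ and their kin stay strictly positive --- which is precisely where the hypothesis $q>np$ is used.
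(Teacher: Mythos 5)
Your proposal is correct and follows essentially the same route as the paper: induction on the order of the derivative with the remainder controlled pointwise via the fundamental theorem of calculus, the limit handled through Corollary~\ref{corollary:convergence} (convergence in measure) precisely because $f^{(n)}$ is only continuous and bounded, and items (iii)--(v) read off from the formula in (ii) by H\"older's inequality on the finite measure space, with (v) obtained as the case $s=r$ of (iv). The only cosmetic differences are that you absorb the $t$-integral into one continuous, bounded test map on $\R^k\times\R^k$ (the paper instead applies the corollary for each fixed parameter and then uses dominated convergence) and that you work directly with the exponent $\tfrac1b=\tfrac1p-\tfrac mq$ rather than the paper's $\varepsilon$-parametrized exponents followed by an embedding into $L^q$.
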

\begin{proof}[Proof of Proposition~\ref{proposition:nonlinearityF}]
	Throughout this proof we
	assume w.l.o.g.\ that $ \lambda_{\R^d}( \O ) > 0 $.
	We claim that for all $ m \in \{ 1, 2, \ldots, n \} $ it holds  
	\begin{enumerate}[(a)]
		\item \label{item:a} that $ F $ is $ m $-times 
		Fr\'echet differentiable 
		and
		\item \label{item:b} 
		that for all
		$ v, u_1, \ldots, u_m \in \L^q(\lambda_\O; \R^k ) $
		it holds that
		\begin{equation}
		\begin{split}
		&
		F^{( m )}( [ v ]_{\lambda_\O, \B(\R^k ) } )
		( [ u_1 ]_{\lambda_\O, \B(\R^k ) }, \ldots, 
		[ u_m ]_{\lambda_\O, \B(\R^k ) } )
		\\
		&
		=
		[ \{ f^{(m)}(v(x)) ( u_1(x), \ldots, u_m(x) ) \}_{ x \in \O } ]_{\lambda_\O, \B(\R^l)}
		.
		\end{split}
		\end{equation}
	\end{enumerate}
	We now prove item~\eqref{item:a}
	and item~\eqref{item:b}
	by induction on
	$ m \in \{ 1, 2, \ldots, n \} $.
	For the base case $ m = 1 $
	we note that 
	Minkowski's integral inequality
%	(see, e.g., \cite[Proposition~8 in Section~A.1]{JentzenKloeden2011})
	and
	H\"older's inequality show that
	for all   
	$ v, h \in \L^q( \lambda_\O; \R^k ) $,
	% $ \varepsilon \in (0,\nicefrac{q}{p}-1) $ 
	$ \varepsilon \in (0,\infty) $
	it holds that
	\begin{equation}
	\begin{split}
	\label{eq:first_estimate}
	&
	\| 
	f \circ ( v + h ) - f \circ  v - ( f' \circ v ) h
	\|_{ \L^p( \lambda_\O; \R^l )  }
	\\
	&
	\leq 
	\int_0^1 
	\| 
	[ f' \circ ( v + r h ) - f' \circ v ] 
	h
	\|_{ \L^p( \lambda_\O; \R^l) }
	\, dr  
	\\
	&
	\leq
	\int_0^1 
	\| f' \circ ( v + r h ) - f' \circ v \|_{\L^{ p( 1 + \nicefrac{1}{\varepsilon} ) }( \lambda_\O; L( \R^k, \R^l ) ) } \, dr 
	\,
	\| h \|_{\L^{ p( 1 + \varepsilon ) }( \lambda_\O; \R^k ) }
	.
	\end{split}
	\end{equation}
	Next observe that
	Corollary~\ref{corollary:convergence}
	(with
	$ ( \Omega, \mathcal{F}, \nu ) 
	=
	( \O, \B( \O ), \lambda_\O ) $,
	$ E = \R^k $,
	$ \mathcal{E} = L ( \R^k, \R^l ) $,
	$ p = p(1+\varepsilon) $,
	$ q = p ( 1 + \nicefrac{1}{\varepsilon} ) $,
	$ \phi = f' $,
	$ f_0 = v $,
	$ f_j = v + r h_j $
	for $ r \in [0,1] $,
	$ j \in \N $,
	$ v \in \L^q( \lambda_\O; \R^k ) $,
	$ ( h_j )_{ j \in \N } \in \{
	( u_j )_{ j \in \N } \subseteq \L^{p(1+\varepsilon)} ( \lambda_\O; \R^k ) 
	\colon
	\limsup_{j \to \infty }
	\| u_j \|_{\L^{p(1+\varepsilon)}( \lambda_\O; \R^k )}
	= 0
	\} $,
	$ \varepsilon \in (0,\infty ) $
	in the notation of Corollary~\ref{corollary:convergence}),
	% H\"older's inequality,
	the fact that
	$ \sup_{ x \in \R^k } \| f'(x) \|_{L(\R^k, \R^l )} < \infty $,
	and
	the fact that $ f' $ is continuous
	ensure that for all
	$ r \in [0,1] $,
	$ v \in \L^q( \lambda_\O; \R^k ) $,
	$ \varepsilon \in (0, \infty) $,
	$ (h_j)_{ j \in \N} \subseteq \L^{p ( 1 + \varepsilon ) }( \lambda_\O; \R^k ) $
	with
	$ \limsup_{ j \to \infty }
	\| h_j \|_{\L^{p(1+\varepsilon)}( \lambda_\O; \R^k ) } = 0 $
	it holds that
	\begin{equation}
	\limsup_{ j \to \infty }
	\| f' \circ ( v + r h_j ) - f' \circ v \|_{\L^{ p(1+\nicefrac{1}{\varepsilon}) }( \lambda_\O; L( \R^k, \R^l ) ) } = 0 .
	\end{equation}
	This, the fact that
	$ \sup_{ x \in \R^k } \| f'(x) \|_{L(\R^k, \R^l)} < \infty $,
	and Lebesgue's theorem of dominated convergence
	prove that for all
	$ v \in \L^q( \lambda_\O; \R^k ) $,
	$ \varepsilon \in (0,\infty) $,
	$ ( h_j )_{ j \in \N } \subseteq \L^{p(1+\varepsilon)}( \lambda_\O; \R^k ) $
	with
	$ \limsup_{ j \to \infty }
	\| h_j \|_{\L^{p(1+\varepsilon)}( \lambda_\O; \R^k ) } = 0 $
	it holds
	that
	\begin{equation}
	\limsup_{ j \to \infty }
	\bigg(
	\int_0^1
	\| f' \circ ( v + r h_j ) - f' \circ v \|_{\L^{ p(1+\nicefrac{1}{\varepsilon}) }( \lambda_\O; L( \R^k, \R^l ) ) } 
	\, dr
	\bigg)
	=
	0.
	\end{equation}
	This together with
	H\"older's inequality and~\eqref{eq:first_estimate}
	implies that for all 
	$ v \in \L^q( \lambda_\O; \R^k ) $,
	$ \varepsilon \in (0,\nicefrac{q}{p}-1) $,
	$ ( h_j )_{j \in \N} \subseteq \L^{p(1+\varepsilon)}(\lambda_\O; \R^k) $
	with
	$ \limsup_{ j \to \infty} \| h_j \|_{\L^{p(1+\varepsilon)}( \lambda_\O; \R^k )} = 0 $
	and
	$ \forall \, j \in \N \colon 
	\| h_j \|_{\L^{p(1+\varepsilon)}( \lambda_\O; \R^k )} > 0 $
	it holds that
	\begin{equation}
	\begin{split}
	&
	\limsup_{ j \to \infty} 
	\bigg(
	\frac{ \| 
		f \circ ( v + h_j ) - f \circ v - ( f' \circ v )  h_j 
		\|_{\L^p( \lambda_\O; \R^l ) } }
	{ \| h_j \|_{ \L^{ p( 1 + \varepsilon ) }  ( \lambda_\O; \R^k ) } }
	\bigg)
	% %
	% 
	%
	\\
	&
	\leq
	\limsup_{ j \to \infty }
	\bigg(
	\int_0^1
	\| f' \circ ( v + r h_j ) - f' \circ v \|_{\L^{ p(1+\nicefrac{1}{\varepsilon}) }( \lambda_\O; L( \R^k, \R^l ) ) } 
	\, dr
	\bigg)
	=
	0
	.
	\end{split}
	\end{equation}
	H\"older's inequality hence shows that for all
	$ v \in \L^q( \lambda_\O; \R^k ) $,
	$ \varepsilon \in (0, \nicefrac{q}{p} - 1 ) $,
	$ ( h_j )_{j \in \N} \subseteq \L^{p(1+\varepsilon)}( \lambda_\O; \R^k ) $
	with 
	$ \limsup_{ j \to \infty } \| h_j \|_{\L^q( \lambda_\O; \R^k ) } = 0 $
	and
	$ \forall \, j \in \N \colon \| h_j \|_{\L^q( \lambda_\O; \R^k ) } > 0 $
	it holds that
	\begin{equation}
	\begin{split}
	&
	\limsup_{ j \to \infty} 
	\bigg(
	\frac{ \| 
		f \circ ( v + h_j ) - f \circ v - ( f' \circ v )  h_j
		\|_{\L^p( \lambda_\O;  \R^l  ) } }
	{ \| h_j \|_{ \L^{ q }  ( \lambda_\O; \R^k ) } }
	\bigg) 
	=
	0
	.
	\end{split}
	\end{equation}
	This demonstrates that 
	$ F $ is Fr\'echet differentiable and that
	for all 
	$ v, h \in \L^q( \lambda_\O; \R^k ) $
	it holds that
	\begin{equation}
	\label{eq:base1}
	F'( [ v ]_{\lambda_\O, \B ( \R^k ) } ) [ h ]_{\lambda_\O, \B( \R^k )} 
	= 
	[ \{ f'(  v(x) ) h(x) \}_{ x \in \O } ]_{\lambda_\O, \B( \R^l ) }
	.
	\end{equation} 
	This proves item~\eqref{item:a} and item~\eqref{item:b}
	in the base case $ m = 1 $.
	For the induction step
	$ \N \cap [0, n-1] \ni m \to m+1 \in \{ 1, 2, \ldots, n \} $
	assume that there exists a natural number
	$ m \in \N \cap [0, n-1] $
	such that item~\eqref{item:a}
	and item~\eqref{item:b}
	hold for $ m = m $.
	Next observe that
	Minkowski's integral inequality
	%(see, e.g., \cite[Proposition~8 in Section~A.1]{JentzenKloeden2011})
	and
	H\"older's inequality show that
	for all  
	$ v, h, u_1 \ldots, u_m \in \L^q( \lambda_\O; \R^k ) $, 
	$ \varepsilon \in ( 0, \frac{q}{p(1+m) } - 1  ) $
	it holds that
	\begin{equation}
	\begin{split}
	\label{eq:first_estimate2}
	&
	\| 
	[ ( f^{(m)} \circ ( v + h ) )  
	- 
	( f^{(m)} \circ v )]
	( u_1, \ldots, u_m ) - ( f^{(m+1)} \circ v ) 
	( h, u_1, \ldots, u_m ) 
	\|_{ \L^p( \lambda_\O; \R^l ) }
	%
	% %
	% \\
	% &
	% =
	% \bigg\|
	% \int_0^1 [ f^{(m+1)} \circ ( v + r h ) - f^{(m+1)} \circ v ] 
	% ( h, u_1, \ldots, u_m )
	% \, dr  
	% \bigg\|_{ \L^p( \lambda_\O; \R^l ) }
	% %
	\\
	&
	\leq 
	\int_0^1 
	\| 
	[ f^{(m+1)} \circ ( v + r h ) - f^{(m+1)} \circ v ] 
	( h, u_1, \ldots, u_m )
	\|_{ \L^p( \lambda_\O; \R^l ) }
	\, dr  
	\\
	&
	\leq
	\int_0^1 
	\| f^{(m+1)} \circ ( v + r h ) - f^{(m+1)} \circ v \|_{\L^{ p( 1 + \nicefrac{1}{\varepsilon} ) }( \lambda_\O; L^{(m+1)} ( \R^k, \R^l ) ) } \, dr 
	\\
	&
	\quad 
	\cdot
	\| h \|_{\L^{ p( 1 + \varepsilon )( 1 + m ) }( \lambda_\O; \R^k ) }
	\prod_{i=1}^{m}
	\| u_i \|_{\L^{ p( 1 + \varepsilon )( 1 + m ) }( \lambda_\O; \R^k ) }
	.
	\end{split}
	\end{equation}
	Moreover, note that
	Corollary~\ref{corollary:convergence}
	(with
	$ ( \Omega, \mathcal{F}, \nu ) 
	=
	( \O, \B( \O ), \lambda_\O ) $,
	$ E = \R^k $,
	$ \mathcal{E} = L^{(m+1)} ( \R^k, \R^l ) $,
	$ p = p(1+\varepsilon)(1+m) $,
	$ q = p ( 1 + \nicefrac{1}{\varepsilon} ) $,
	$ \phi = f^{(m+1)} $,
	$ f_0 = v $,
	$ f_j = v + r h_j $
	for $ r \in [0,1] $,
	$ j \in \N $,
	$ v \in \L^q( \lambda_\O; \R^k ) $,
	$ ( h_j )_{ j \in \N } \in \{
	( u_j )_{ j \in \N } \subseteq \L^{p(1+\varepsilon)(1+m)} ( \lambda_\O; \R^k ) 
	\colon
	$
	$
	\limsup_{j \to \infty }
	\| u_j \|_{\L^{p(1+\varepsilon)(1+m)}( \lambda_\O; \R^k )}
	= 0
	\} $,
	$ \varepsilon \in (0,\infty ) $
	in the notation of Corollary~\ref{corollary:convergence}),
	the fact that
	$ \sup_{ x \in \R^k } \| f^{(m+1)}( x ) \|_{L^{(m+1)}(\R^k, \R^l )}
	< \infty $,
	and
	the fact that $ f^{(m+1)} $ is continuous 
	ensure that for all
	$ r \in [0,1] $,
	$ v \in \L^q( \lambda_\O; \R^k ) $,
	$ \varepsilon \in (0,\infty) $,
	$ (h_j)_{j \in \N} \subseteq \L^{p(1+\varepsilon)(1+m)}(\lambda_\O; \R^k ) $
	with
	$ \limsup_{ j \to \infty } \| h_j \|_{\L^{ p ( 1 + \varepsilon )(1+m) }( \lambda_\O; \R^k) } = 0 $
	it holds that
	\begin{equation}
	\begin{split}
	&
	\limsup_{j \to \infty}
	\| f^{(m+1)} \circ ( v + r h_j ) - f^{(m+1)} \circ v    
	\|_{ \L^{p( 1 + \nicefrac{1}{\varepsilon})}( \lambda_\O; L^{(m+1)}(\R^k, \R^l) ) } % 
	=
	0.
	\end{split}
	\end{equation}
	This, the fact that
	$ \sup_{ x \in \R^k } \| f^{(m+1)}(x) \|_{L^{(m+1)}(\R^k, \R^l)}< \infty $,
	and Lebesgue's theorem of dominated convergence
	prove that for all
	$ v \in \L^q( \lambda_\O; \R^k ) $, $ \varepsilon \in (0,\infty) $,
	$ ( h_j )_{ j \in \N} 
	\subseteq \L^{p(1+\varepsilon)(1+m)}(\lambda_\O; \R^k ) $
	with
	$ \limsup_{ j \to \infty} \| h_j \|_{\L^{p(1+\varepsilon)(1+m)}( \lambda_\O; \R^k )}= 0 $
	it holds that 
	\begin{equation}
	\begin{split}
	&
	\limsup_{j \to \infty}
	\bigg(
	\int_0^1 
	\|
	f^{(m+1)} \circ ( v + r h_j ) - f^{(m+1)} \circ v ] 
	\|_{ \L^{ p( 1 + \nicefrac{1}{\varepsilon} ) } ( \lambda_\O ; L^{(m+1)}(\R^k, \R^l) ) }
	\, dr  
	\bigg) 
	% % 
	=
	0. 
	\end{split}
	\end{equation}
	The fact that
	$ \forall \, \varepsilon \in (0, \frac{q}{p( 1 + m )} - 1 ) \colon
	\L^{ p (1+\varepsilon)(1+m) } ( \lambda_\O; \R^k )
	\subseteq 
	\L^q( \lambda_\O; \R^k ) $
	and~\eqref{eq:first_estimate2}
	hence
	imply that for all 
	$ v \in \L^q( \lambda_\O; \R^k ) $,
	$ \varepsilon \in (0,\frac{q}{p(1+m)}-1) $,
	$ ( h_j )_{j \in \N} \subseteq \L^{p(1+\varepsilon)(1+m)}(\lambda_\O; \R^k) $
	with
	$ \limsup_{ j \to \infty} \| h_j \|_{\L^{p(1+\varepsilon)(1+m)}( \lambda_\O; \R^k )} = 0 $
	and
	$ \forall \, j \in \N \colon
	\| h_j \|_{\L^{ p( 1 + \varepsilon )( 1 + m ) }( \lambda_\O; \R^k ) }
	> 0 $ 
	it holds that
	\begin{equation}
	\begin{split}
	&
	\limsup_{ j \to \infty} 
	\sup_{ \substack{ u_1, \ldots, u_m \in \L^q( \lambda_\O; \R^k ), 
			\\
			\prod^m_{i=1}
			\| u_i \|_{\L^{ p ( 1 + \varepsilon) ( 1 + m ) }( \lambda_\O; \R^k ) }	> 0
		} }
		\!\!\!
		\tfrac{ \| 
			[( f^{(m)} \circ ( v + h_j ) )  
			- 
			( f^{(m)} \circ v )
			]
			( u_1, \ldots, u_m )
			- 
			( f^{(m+1)} \circ v ) 
			( h_j, u_1, \ldots, u_m )
			\|_{\L^p( \lambda_\O; \R^l ) } }
		{ 
			\| h_j \|_{\L^{ p( 1 + \varepsilon )( 1 + m ) }( \lambda_\O; \R^k ) }
			\prod_{i=1}^m \| u_i \|_{ \L^{ p( 1 + \varepsilon ) (1+m)}  ( \lambda_\O; \R^k ) } }
		\\
		&
		\leq
		\limsup_{j \to \infty}
		\bigg(
		\int_0^1 
		\| f^{(m+1)} \circ ( v + r h_j  ) - f^{(m+1)} \circ v 
		\|_{ \L^{ p( 1 + \nicefrac{1}{\varepsilon} ) } ( \lambda_\O ; L^{(m+1)}( \R^k, \R^l) ) }
		\, dr  
		\bigg) 
		=
		0
		.
		\end{split}
		\end{equation}
		%
		%
		% Therefore, we establish that 
		% for all 
		% $ v \in \L^q( \lambda_\O; \R^k ) $,
		% $ \varepsilon \in (0,\nicefrac{q}{p(1+m)}-1) $
		% %
		% %
		% it holds that
		% %
		% %
		% \begin{equation}
		% \begin{split}
		% \label{eq:gives_differentiability2}
		% % 
		% &
		% \limsup_{ [ \L^q  ( \lambda_\O; \R^l ) ]^{(m+1)} \ni (h, u_1, \ldots, u_m) \to 0 } 
		% \tfrac{ \| 
		% 	[ f^{(m)} \circ ( v + h )  - f^{(m)} \circ v ]  ( u_1, \ldots, u_m )
		% 	- 
		% 	( f^{(m+1)} \circ v )  ( h, u_1, \ldots, u_m )
		% 	\|_{\L^p( \lambda_\O; \R^l ) } }
		% { 
		% 	\| h \|_{\L^{ q}( \lambda_\O; \R^k ) }
		% 	\prod_{i=1}^m \| u_i \|_{ \L^{ q }  ( \lambda_\O; \R^k ) }
		% 	}
		% %
		% \\
		% &
		% \leq
		% \limsup_{ \L^q  ( \lambda_\O; \R^l ) \ni h \to 0 }
		% \tfrac{ \| 
		% [ f^{(m)} \circ ( v + h ) - f^{(m)} \circ v ] 
		% ( u_1, \ldots, u_m )
		% - 
		% ( f^{(m+1)} \circ v ) ( h , u_1, \ldots, u_m ) 
		% 	\|_{\L^p( \lambda_\O; \R^l ) } }
		% { \| h \|_{\L^{ p( 1 + \varepsilon )( 1 + m ) }( \lambda_\O; \R^k ) }
		% 	\prod_{i=1}^m \| u_i \|_{ \L^{ p( 1 + \varepsilon ) (1+m)}
		% 		  ( \lambda_\O; \R^k ) } }
		% %
		% \\
		% &
		% \quad
		% \cdot 
		% \tfrac{
		% 	\| h \|_{ \L^{ p( 1 + \varepsilon )(1+m) }  ( \lambda_\O; \R^k ) } 
		% }{
		% \| h \|_{ \L^q  ( \lambda_\O; \R^k ) }
		% } 
		% \cdot
		% \prod_{i=1}^m
		% \tfrac{
		% 	 \| u_i \|_{ \L^{ p( 1 + \varepsilon ) (1+m)}
		% 	 	( \lambda_\O; \R^k ) }
		% 	}{
		% 	 \| u_i \|_{ \L^q
		% 	 	( \lambda_\O; \R^k ) }
		% 	}
		%=
		%0
		%.
		%%
		%\end{split}
		%\end{equation}
		%
		%
		%
		H\"older's inequality therefore shows that for all 
		$ v \in \L^q( \lambda_\O; \R^k ) $
		and all
		$ ( h_j )_{ j \in \N } \subseteq \L^q( \lambda_\O; \R^k ) $
		with
		$ \limsup_{j \to \infty} 
		\| h_j \|_{ \L^q( \lambda_\O; \R^k ) } = 0 $
		and
		$ \forall \, j \in \N \colon \| h_j \|_{\L^q( \lambda_\O; \R^k ) } > 0 $
		it holds that
		\begin{multline}
		\limsup_{ j \to \infty} 
		\sup_{ \substack{ u_1, \ldots, u_m \in \L^q( \lambda_\O; \R^k ), 
				\\
				\prod^m_{i=1}
				\| u_i \|_{\L^q( \lambda_\O; \R^k ) }	> 0
			} }
			\!\!\!
			\bigg(
			\tfrac{ \| 
				[( f^{(m)} \circ ( v + h_j ) )  
				- 
				( f^{(m)} \circ v )
				]
				( u_1, \ldots, u_m )
				- 
				( f^{(m+1)} \circ v ) 
				( h_j, u_1, \ldots, u_m )
				\|_{\L^p( \lambda_\O; \R^l ) } }
			{ 
				\| h_j \|_{\L^{ q }( \lambda_\O; \R^k ) }
				\prod_{i=1}^m \| u_i \|_{ \L^{ q }  ( \lambda_\O; \R^k ) } }
			\bigg)
			\\=
			0
			.
			\end{multline}
			The induction hypothesis hence
			implies that 
			$ F^{(m)} $ is Fr\'echet differentiable and that for all 
			$ v, h, u_1, \ldots, u_m \in \L^q( \lambda_\O; \R^k ) $
			it holds that
			\begin{equation}
			\begin{split}
			\label{eq:F_representation}
			&
			F^{(m+1)}( [ v ]_{\lambda_\O, \B( \R^k ) } )( [ h ]_{\lambda_\O, \B( \R^k ) }, 
			[ u_1 ]_{\lambda_\O, \B( \R^k ) }, \ldots, 
			[ u_m ]_{\lambda_\O, \B( \R^k ) } ) 
			\\
			&
			=
			[
			\{ f^{(m+1)}( v(x) ) ( h(x) , u_1(x), \ldots u_m(x) ) \}_{ x \in \O } 
			]_{ \lambda_\O, \B( \R^l ) }
			.
			\end{split}
			\end{equation}
			This establishes item~\eqref{item:a} and item~\eqref{item:b}
			in the case $ m + 1 $.
			Induction thus completes the proof of item~\eqref{item:a}
			and item~\eqref{item:b}.

			In the next step we observe that 
			H\"older's inequality ensures that for all
			$ m \in \{ 1, 2, \ldots, n \} $,
			$ v, w \in \L^q( \lambda_\O; \R^k ) $,
			$ r, s \in ( p, \infty ) $,
			$ u_1, \ldots, u_m \in \L^s( \lambda_\O; \R^k ) $
			with
			$ \frac{1}{r} + \frac{m}{s} \leq \frac{1}{p} $
			it holds that
			\begin{equation} 
			\begin{split}
			\label{eq:F_cont} 
			&
			\| [f^{(m)} \circ v  - f^{(m)} \circ w ]
			( u_1, \ldots,  u_m )  \|_{ \L^p( \lambda_\O; \R^l ) }
			\\
			&
			\leq
			[ \lambda_{\R^d}( \O ) ]^{ [ \frac{1}{p} - \frac{1}{r}- \frac{m}{s} ] }
			\| f^{(m)}\circ v - f^{(m)} \circ w \|_{ \L^r( \lambda_\O; L^{(m)}( \R^k, \R^l) ) }
			\prod_{i=1}^{m}
			\| u_i \|_{ \L^{ s } (\lambda_\O; \R^k) }
			.
			\end{split}
			\end{equation}
			This implies that for all
			$ m \in \{ 1, 2, \ldots, n \} $,
			$ v, w \in \L^q( \lambda_\O; \R^k ) $,
			$ r, s \in ( p, \infty ) $
			with $ \frac{1}{r} + \frac{m}{s} \leq \frac{1}{p} $
			it holds that
			\begin{equation} 
			\begin{split}
			\label{eq:F_cont2} 
			&
			\sup_{ \substack{ u_1, \ldots, u_m \in \L^s( \lambda_\O; \R^k ), 
					\\
					\prod^m_{i=1}
					\| u_i \|_{\L^s ( \lambda_\O; \R^k ) }	> 0
				} }
				\Bigg( 
				\frac{
					\| [f^{(m)} \circ v  - f^{(m)} \circ w ]
					( u_1, \ldots,  u_m )  \|_{ \L^p( \lambda_\O; \R^l ) }
				}
				{
					\| u_1 \|_{\L^s ( \lambda_\O; \R^k ) }
					\cdot
					\ldots
					\cdot
					\| u_m \|_{\L^s ( \lambda_\O; \R^k ) }
				}
				\Bigg)
				\\
				&
				\leq
				[ \lambda_{\R^d}( \O ) ]^{ [ \frac{1}{p} - \frac{1}{r}- \frac{m}{s} ] }
				\| f^{(m)}\circ v - f^{(m)} \circ w \|_{ \L^r( \lambda_\O; L^{(m)}( \R^k, \R^l) ) }
				.
				\end{split}
				\end{equation}
				Corollary~\ref{corollary:convergence}
				(with
				$ ( \Omega, \mathcal{F}, \nu ) = ( \O, \B(\O), \lambda_\O ) $,
				$ E = \R^k $,
				$ \mathcal{E} = L^{(n)}( \R^k, \R^l ) $,
				$ p = q $,
				$ q = r $,
				$ \phi = f^{(n)} $,
				$ f_j = v_j $
				for 
				$ r \in (0,\infty) $,
				$ j \in \N_0 $
				in the notation of 
				Corollary~\ref{corollary:convergence})
				and the fact that
				$ \sup_{ x \in \R^k } \| f^{(n)} ( x ) \|_{L^{(n)}( \R^k, \R^l ) } < \infty $
				hence show that for all
				$ (v_j)_{ j \in \N_0 } \subseteq \L^q( \lambda_\O; \R^k ) $,
				$ r, s \in (p,\infty) $
				with 
				$ \limsup_{ j \to \infty } \| v_j \|_{\L^q( \lambda_\O; \R^k )} = 0 $
				and
				$ \frac{1}{r} + \frac{n}{s} \leq \frac{1}{p} $
				it holds that
				\begin{equation} 
				\begin{split}
				\label{eq:F_cont3} 
				&
				\limsup_{ j \to \infty }
				\sup_{ \substack{ u_1, \ldots, u_n \in \L^s( \lambda_\O; \R^k ), 
						\\
						\prod^m_{i=1}
						\| u_i \|_{\L^s ( \lambda_\O; \R^k ) }	> 0
					} }
					\Bigg( 
					\frac{
						\| [f^{(n)} \circ v_j  - f^{(n)} \circ v_0 ]
						( u_1, \ldots,  u_n )  \|_{ \L^p( \lambda_\O; \R^l ) }
					}
					{
						\| u_1 \|_{\L^s ( \lambda_\O; \R^k ) }
						\cdot
						\ldots
						\cdot
						\| u_n \|_{\L^s ( \lambda_\O; \R^k ) }
					}
					\Bigg)
					\\
					&
					\leq
					[ \lambda_{\R^d}( \O ) ]^{ [ \frac{1}{p} - \frac{1}{r}- \frac{m}{s} ] }
					\bigg[
					\limsup_{ j \to \infty }
					\| f^{(n)}\circ v_j - f^{(n)} \circ v_0 \|_{ \L^r( \lambda_\O; L^{(n)}( \R^k, \R^l) ) } 
					\bigg]
					=
					0
					.
					\end{split}
					\end{equation}
					This establishes that
					$ F^{(n)} $ is continuous.
					Combining this with item~\eqref{item:a} and item~\eqref{item:b} proves
					item~\eqref{item:1} and item~\eqref{item:2}.
					Next note that H\"older's inequality shows that for all
					$ m \in \{ 1, 2, \ldots, n \} $,
					$ r \in [mp, \infty) $,
					$ v \in \L^q( \lambda_\O; \R^k ) $,
					$ u_1, \ldots, u_m \in \L^{ \max \{ r, q \} }( \lambda_\O; \R^k ) $
					it holds that
					\begin{equation}
					\begin{split}
					&
					\| ( f^{(m)} \circ v ) ( u_1, \ldots, u_m ) \|_{\L^p( \lambda_\O; \R^l ) }
					\\
					&
					\leq
					\bigg[
					\sup_{ x \in \R^k }
					\| f^{(m)}(x) \|_{L^{(m)}(\R^k, \R^l )}
					\bigg]
					[ \lambda_{\R^d}( \O)]^{[ \frac{1}{p} - \frac{m}{r} ]}
					\prod_{i=1}^m
					\| u_i \|_{\L^r( \lambda_\O; \R^k ) }
					.
					\end{split}
					\end{equation}
					This and item~\eqref{item:2}
					imply that for all $ m \in \{ 1, 2 , \ldots, n \} $,
					$ r \in [mp, \infty) $,
					$ v \in \L^q( \lambda_\O; \R^k ) $ it holds that
					\begin{equation}
					\begin{split}
					&
					\sup_{ \substack{
							u_1, \ldots, u_m \in \L^{\max\{ r, q \} }( \lambda_\O; \R^k ),
							\\
							\prod^m_{i=1}
							\| u_i \|_{\L^r(\lambda_\O; \R^k ) }
							> 0
						}}
						\Bigg(
						\frac{
							\| F^{(m)}( [ v ]_{\lambda_\O, \B(\R^k ) } ) 
							( 
							[ u_1 ]_{\lambda_\O, \B(\R^k ) },
							\ldots, 
							[ u_m ]_{\lambda_\O, \B(\R^k ) }
							)
							\|_{ L^p ( \lambda_\O; \R^l ) }
						}{
						\| u_1 \|_{\L^r(\lambda_\O; \R^k )}
						\cdot
						\ldots
						\cdot
						\| u_m \|_{\L^r(\lambda_\O; \R^k )}
					}
					\Bigg)
					\\
					&
					\leq
					\bigg[\sup_{x\in \R^k} \| f^{(m)}(x) \|_{L^{(m)}(\R^k, \R^l )}\bigg]
					[ \lambda_{\R^d}( \O ) ]^{[ \frac{1}{p} - \frac{m}{r} ]}
					.
					\end{split}
					\end{equation}
					Hence, we obtain that for all
					$ m \in \{ 1, 2, \ldots, n \} $,
					$ r \in [mp, \infty) $
					it holds that
					\begin{equation}
					\begin{split}
					&
					\sup_{ v \in L^q( \lambda_\O; \R^k ) }
					\sup_{
						u_1, \ldots, u_m \in L^{ \max \{r, q \} }( \lambda_\O; \R^k ) \backslash \{ 0 \}
					}
					\Bigg(
					\frac{
						\| F^{(m)}(v) (u_1, \ldots, u_m ) \|_{L^p(\lambda_\O; \R^l ) 
						}}{
						\| u_1 \|_{L^r(\lambda_\O; \R^k )}
						\cdot
						\ldots
						\cdot
						\| u_m \|_{L^r(\lambda_\O; \R^k )}
					}
					\Bigg)
					\\
					&
					\leq
					\bigg[
					\sup_{ x \in \R^k }
					\| f^{(m)}(x) \|_{L^{(m)}(\R^k, \R^l )}
					\bigg]
					[ \lambda_{\R^d}( \O ) ]^{[ \frac{1}{p} - \frac{m}{r} ]}
					< \infty
					.
					\end{split}
					\end{equation}
					This proves item~\eqref{item:3}.
					In the next step we observe that~\eqref{eq:F_cont2} 
					assures that for all
					$ m \in \{ 1, 2, \ldots, n \} $,
					$ r, s \in (p, \infty) $,
					$ v, w \in \L^{ \max \{ r, q \} }( \lambda_\O; \R^k ) $
					with
					$ \frac{1}{r} + \frac{m}{s} \leq \frac{1}{p} $
					it holds that
					\begin{equation}
					\begin{split}
					&
					\sup_{
						\substack{
							u_1, \ldots, u_m \in \L^s( \lambda_\O; \R^k ), 
							\\
							\prod^m_{i=1}
							\| u_i \|_{\L^s(\lambda_\O; \R^k ) }
							> 0
						}
					}
					\Bigg(
					\frac{
						\| ( f^{(m)} \circ v - f^{(m)} \circ w ) ( u_1, \ldots, u_m ) 
						\|_{\L^p( \lambda_\O; \R^l ) }
					}{
					\| u_1 \|_{ \L^s( \lambda_\O; \R^k ) }
					\cdot
					\ldots
					\cdot
					\| u_m \|_{ \L^s( \lambda_\O; \R^k ) }
				}
				\Bigg)
				\\
				&
				\leq
				\left[
				\sup_{ \substack{
						x, y \in \R^k,
						\\
						x \neq y
					}}
					\frac{ 
						\| f^{(m)}(x) - f^{(m)}(y) \|_{L^{(m)}(\R^k, \R^l )}
					}
					{
						\| x - y \|_{\R^k}
					}
					\right]
					\!
					[ \lambda_{\R^d} ( \O ) ]^{ [ \frac{1}{p} - \frac{1}{r} - \frac{m}{s} ] }
					\| v - w \|_{\L^r( \lambda_\O; \R^k ) }
					.
					\end{split}
					\end{equation}
					This and item~\eqref{item:2}
					establish that for all
					$ m \in \{ 1, 2, \ldots, n \} $,
					$ r, s \in (p,\infty) $,
					$ v, w \in L^{  \max \{ r, q \} } ( \lambda_\O ; \R^k ) $
					with
					$ \frac{1}{r} + \frac{m}{s} \leq \frac{1}{p} $
					it holds that
					\begin{equation}
					\begin{split}
					&
					\sup_{ 
						u_1, \ldots, u_m \in L^{ \max \{ s, q \} }( \lambda_\O; \R^k ) \backslash \{ 0 \}
					}
					\Bigg(
					\frac{
						\| F^{(m)}(v) ( u_1, \ldots, u_m ) 
						-
						F^{(m)}(w) ( u_1, \ldots, u_m ) 
						\|_{L^p(\lambda_\O; \R^l ) }
					}{
					\| u_1 \|_{L^s ( \lambda_\O ; \R^k ) }
					\cdot
					\ldots
					\cdot
					\| u_m \|_{L^s ( \lambda_\O ; \R^k ) }
				}
				\Bigg)
				\\
				&
				\leq
				\left[
				\sup_{ \substack{
						x, y \in \R^k,
						\\
						x \neq y
					}}
					\frac{ 
						\| f^{(m)}(x) - f^{(m)}(y) \|_{L^{(m)}(\R^k, \R^l )}
					}
					{
						\| x - y \|_{\R^k}
					}
					\right]
					\!
					[ \lambda_{\R^d} ( \O ) ]^{ [ \frac{1}{p} - \frac{1}{r} - \frac{m}{s} ] }
					\| v - w \|_{ L^r( \lambda_\O; \R^k ) }
					.
					\end{split}
					\end{equation}
					This proves item~\eqref{item:4}.
					Item~\eqref{item:5} 
					is an immediate consequence of item~\eqref{item:4}.
					The proof of Proposition~\ref{proposition:nonlinearityF}
					is thus completed.
				\end{proof}
\subsection{Regular diffusion coefficients}
\label{subsection:Regular_diffusion}
\begin{lemma}
	\label{lemma:InterpolationSpaces}
	Consider the notation in Subsection~\ref{sec:notation}, 
	let
	$ p \in [ 2,\infty) $, 
	$ r \in ( \nicefrac{1}{4}, \infty) $,
	let $ ( H, \langle \cdot, \cdot \rangle_H, \left \| \cdot \right \|_H ) 
	=
	( L^2(\lambda_{(0,1)}; \R ), 
	\langle \cdot, \cdot \rangle_{ L^2(\lambda_{(0,1)}; \R ) },
	\left \| \cdot \right \|_{ L^2(\lambda_{(0,1)}; \R ) } )
	$,
	and
	let $ A \colon D(A) \subseteq H \to H $
	be the Laplacian with Dirichlet boundary conditions on $ H $.
	Then
	\begin{enumerate}[(i)]
		\item \label{item:i} it holds for all $ v \in H $
		that $ ( - A)^{-r} v \in L^p ( \lambda_{(0,1)};  \R) $,
		\item \label{item:ii} it holds that
		$ ( H \ni v \mapsto (-A)^{-r} v \in L^p ( \lambda_{(0,1)};  \R) )
		\in \gamma ( H, L^p ( \lambda_{(0,1)};  \R) ) $,
		and
		\item \label{item:iii} it holds that
		\begin{equation}
		\begin{split}
		&
		\| H \ni v \mapsto (-A)^{- r} v \in L^p ( \lambda_{(0,1)};  \R) \|_{\gamma( H, L^p( \lambda_{(0,1)}; \R) ) }
		\\
		&
		\leq
		\bigg[
		\int_\R
		\tfrac{ | x |^p }{\sqrt{2\pi}} \, e^{ - \nicefrac{x^2}{2} } 
		\, dx
		\bigg]^{\nicefrac{1}{p}}
		\bigg[
		\smallsum\limits_{ n = 1 }^\infty
		\tfrac{ 1 }{ n ^{4r} } 
		\bigg]^{\nicefrac{1}{2}}
		< \infty.
		\end{split}
		\end{equation}
	\end{enumerate}
\end{lemma}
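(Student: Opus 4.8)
The plan is to diagonalise $A$ and reduce the three assertions to an elementary Fourier sine-series computation, using only the spectral decomposition of $A$ together with the standard identification of $\gamma$-radonifying operators from a Hilbert space into an $L^p$-space. Recall first that $-A$, the Dirichlet Laplacian on $(0,1)$, is diagonalised by the orthonormal basis $(e_n)_{n\in\N}$ of $H$ given by $e_n(x)=\sqrt{2}\,\sin(n\pi x)$, with $-A e_n=\pi^2 n^2 e_n$ and hence $(-A)^{-r}e_n=(\pi n)^{-2r}e_n$ for every $n\in\N$; in particular $\|e_n\|_{L^\infty(\lambda_{(0,1)};\R)}=\sqrt{2}$ and $\|e_n\|_{L^2(\lambda_{(0,1)};\R)}=1$ for all $n\in\N$.

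For item~\eqref{item:i} I would argue as follows. Given $v=\smallsum_{n\in\N}\langle v,e_n\rangle_H e_n\in H$, the triangle inequality in $L^p(\lambda_{(0,1)};\R)$ and the Cauchy--Schwarz inequality yield $\smallsum_{n\in\N}|\langle v,e_n\rangle_H|(\pi n)^{-2r}\|e_n\|_{L^p(\lambda_{(0,1)};\R)}\leq\sqrt{2}\,\|v\|_H\big(\smallsum_{n\in\N}(\pi n)^{-4r}\big)^{1/2}<\infty$, where finiteness of the last series uses $4r>1$. Hence the partial sums of $\smallsum_{n\in\N}\langle v,e_n\rangle_H(\pi n)^{-2r}e_n$ form a Cauchy sequence in $L^p(\lambda_{(0,1)};\R)$; since this series also converges in $H=L^2(\lambda_{(0,1)};\R)$ and the two limits coincide $\lambda_{(0,1)}$-a.e., the element $(-A)^{-r}v$ admits an $L^p(\lambda_{(0,1)};\R)$-representative, which is item~\eqref{item:i}, and the same estimate shows that the linear operator $T:=(H\ni v\mapsto(-A)^{-r}v\in L^p(\lambda_{(0,1)};\R))$ is bounded.

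For items~\eqref{item:ii} and~\eqref{item:iii} I would invoke the elementary fact (see, e.g., \cite{VanNeerven2010Gamma}) that for every finite-rank operator $R\colon H\to L^p(\mu)$ with $p\in[2,\infty)$ and every orthonormal basis $(h_n)_{n\in\N}$ of $H$ one has $\|R\|_{\gamma(H,L^p(\mu))}\leq\big[\int_\R\frac{|x|^p}{\sqrt{2\pi}}e^{-\nicefrac{x^2}{2}}\,dx\big]^{1/p}\big\|(\smallsum_{n\in\N}|Rh_n|^2)^{1/2}\big\|_{L^p(\mu)}$. This is a one-line computation with a sequence $(\gamma_n)_{n\in\N}$ of independent standard normal random variables: since $\smallsum_{n\in\N}\gamma_n(Rh_n)(s)$ is a centred Gaussian with variance $\smallsum_{n\in\N}|(Rh_n)(s)|^2$, Tonelli's theorem gives $\E\big\|\smallsum_{n\in\N}\gamma_n Rh_n\big\|_{L^p(\mu)}^p=\E|\gamma_1|^p\cdot\int_S(\smallsum_{n\in\N}|(Rh_n)(s)|^2)^{p/2}\,d\mu(s)$, and $\|R\|_{\gamma(H,L^p(\mu))}=\big(\E\|\smallsum_{n\in\N}\gamma_n Rh_n\|_{L^p(\mu)}^2\big)^{1/2}\leq\big(\E\|\smallsum_{n\in\N}\gamma_n Rh_n\|_{L^p(\mu)}^p\big)^{1/p}$ because $p\geq 2$. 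Applying this to the truncations $T_N:=(H\ni v\mapsto\smallsum_{n=1}^N\langle v,e_n\rangle_H(\pi n)^{-2r}e_n\in L^p(\lambda_{(0,1)};\R))$, and using that $\big\|(\smallsum_{n>N}|(-A)^{-r}e_n|^2)^{1/2}\big\|_{L^p(\lambda_{(0,1)};\R)}\leq\big(2\pi^{-4r}\smallsum_{n>N}n^{-4r}\big)^{1/2}\to0$ (whence $(T_N)_{N\in\N}$ is Cauchy in $\gamma(H,L^p(\lambda_{(0,1)};\R))$) together with $T_N\to T$ in operator norm (the same tail bound), one concludes that $T\in\gamma(H,L^p(\lambda_{(0,1)};\R))$ and $\|T\|_{\gamma(H,L^p(\lambda_{(0,1)};\R))}\leq\big[\int_\R\frac{|x|^p}{\sqrt{2\pi}}e^{-\nicefrac{x^2}{2}}\,dx\big]^{1/p}\big\|(\smallsum_{n\in\N}|(-A)^{-r}e_n|^2)^{1/2}\big\|_{L^p(\lambda_{(0,1)};\R)}$.

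It then remains to estimate this last $L^p$-norm. Since $((-A)^{-r}e_n)(x)=(\pi n)^{-2r}\sqrt{2}\,\sin(n\pi x)$, we obtain $\smallsum_{n\in\N}|((-A)^{-r}e_n)(x)|^2=2\smallsum_{n\in\N}(\pi n)^{-4r}\sin^2(n\pi x)\leq 2\pi^{-4r}\smallsum_{n\in\N}n^{-4r}$ for $\lambda_{(0,1)}$-a.e.\ $x$, so, since $\lambda_{(0,1)}((0,1))=1$, $\big\|(\smallsum_{n\in\N}|(-A)^{-r}e_n|^2)^{1/2}\big\|_{L^p(\lambda_{(0,1)};\R)}\leq\big(2\pi^{-4r}\smallsum_{n\in\N}n^{-4r}\big)^{1/2}$; moreover $r>\nicefrac{1}{4}$ forces $\pi^{4r}>\pi>2$, so the factor $2\pi^{-4r}$ is strictly less than $1$, and $\smallsum_{n\in\N}n^{-4r}<\infty$ because $4r>1$. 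Chaining the last displays proves item~\eqref{item:iii}, which also re-establishes item~\eqref{item:ii}. The only genuinely non-routine ingredient is the $\gamma$-radonifying characterisation for $L^p$-target spaces together with the passage from finite-rank to general operators; everything else is an explicit and elementary sine-series estimate.
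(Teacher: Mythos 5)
Your proposal is correct and follows essentially the same route as the paper: both arguments diagonalise $-A$ in the sine basis $e_n = \sqrt{2}\sin(n\pi\cdot)$, reduce the $\gamma$-norm to a Gaussian moment computation (a centred Gaussian sum has $p$-th moment $\E|\gamma_1|^p$ times the $p$-th power of its standard deviation), and conclude via the tail estimate $\sum_{n>N}(\pi n)^{-4r}\to 0$ together with the square-function bound $2\pi^{-4r}\le 1$; the paper shows the partial Gaussian sums are Cauchy in $L^2(\P;L^p(\lambda_{(0,1)};\R))$ and invokes \cite[Theorem~3.20]{VanNeerven2010Gamma}, while you phrase the same computation as a square-function estimate for the finite-rank truncations $T_N$ and pass to the limit in $\gamma(H,L^p)$, which is an equivalent packaging. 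The only genuine (and harmless) deviation is item~(i), where the paper cites the Sobolev embedding theorem and you instead give a direct Cauchy--Schwarz argument showing absolute convergence of the sine series in $L^p(\lambda_{(0,1)};\R)$, which is equally valid and somewhat more self-contained.
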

\begin{proof}[Proof of Lemma~\ref{lemma:InterpolationSpaces}]
	Throughout this proof let
	$ ( \Omega, \mathcal{F}, \P ) $
	be a probability space,
	let $ \gamma_n \colon \Omega \to \R $, $ n \in \N $,
	be independent standard normal random variables,
	let $ f_n \colon (0,1) \to \R $, $ n \in \N $,
	satisfy for all $ n \in \N $, $ x \in (0,1) $
	that
	$ f_n(x) = \sqrt{2} \sin(n \pi x ) $,
	let $ ( \rho_n )_{ n \in \N } \subseteq \R $
	satisfy for all
	$ n \in \N $ that $ \rho_n = \pi^2 n^2 $,
	and let $ e_n \in H $, $ n \in \N $,
	satisfy for all $ n \in \N $ that
	$ e_n = [ f_n ]_{ \lambda_{(0,1)}, \B(\R) } $.
	Note that item~\eqref{item:i} is an immediate 
	consequence from the Sobolev embedding theorem.
	It thus remains to prove item~\eqref{item:ii} and~\eqref{item:iii}.
	For this observe that Jensen's inequality ensures that for all
	$ M, N \in \N $ with $ M \leq N $ it holds that
	\begin{equation}
	\begin{split}
	&
	\E \Bigg[
	\bigg\|
	\smallsum\limits_{n=M}^N \gamma_n (-A)^{- r} e_n
	\bigg\|_{L^p( \lambda_{(0,1)}; \R ) }^2
	\Bigg]
	=
	\E \Bigg[
	\bigg\|
	\smallsum\limits_{n=M}^N \gamma_n (\rho_n)^{-r} e_n
	\bigg\|_{L^p( \lambda_{(0,1)}; \R ) }^2
	\Bigg]
	\\
	&
	\leq
	\Bigg(
	\E \Bigg[
	\bigg\|
	\smallsum\limits_{n=M}^N \gamma_n (\rho_n)^{-r} e_n
	\bigg\|_{L^p( \lambda_{(0,1)}; \R ) }^p
	\Bigg]
	\Bigg)^{\nicefrac{2}{p}}
	\end{split}
	\end{equation}
	This implies that  
	for all
	$ M, N \in \N $ with $ M \leq N $ it holds that
	\begin{equation}
	\begin{split}
	&
	\E \Bigg[
	\bigg\|
	\smallsum\limits_{n=M}^N \gamma_n (-A)^{-r} e_n
	\bigg\|_{L^p( \lambda_{(0,1)}; \R ) }^2
	\Bigg]
	\\
	&
	\leq
	\bigg[
	\int_0^1
	\E\bigg[
	\bigg|
	\smallsum\limits_{n=M}^N
	\gamma_n ( \rho_n )^{-r} f_n(x)
	\bigg|^p
	\bigg]
	\,
	dx
	\bigg]^{\nicefrac{2}{p}}
	\\
	&
	=
	\bigg[
	\int_0^1
	\bigg|
	\E\bigg[
	\bigg|
	\smallsum\limits_{n=M}^N
	\gamma_n ( \rho_n )^{-r} f_n(x)
	\bigg|^2
	\bigg]
	\bigg|^{ \nicefrac{p}{2} }
	\E \big[ |\gamma_1 |^p \big]
	\,
	dx
	\bigg]^{\nicefrac{2}{p}}
	\\
	&
	=
	\bigg[
	\int_0^1
	\bigg(
	\smallsum\limits_{n=M}^N
	( \rho_n )^{-2r}
	| f_n(x) |^2
	\bigg)^{\nicefrac{p}{2}}
	\E \big[ | \gamma_1 |^p \big]
	\, dx
	\bigg]^{\nicefrac{2}{p}}
	\\
	&
	=
	\bigg[
	\int_0^1
	\bigg(
	\smallsum\limits_{n=M}^N
	( \rho_n )^{-2r}
	| f_n(x) |^2
	\bigg)^{\nicefrac{p}{2}}
	\, dx
	\bigg]^{\nicefrac{2}{p}}
	\| \gamma_1 \|_{ \L^p( \P; \R )}^2
	.
	\end{split}
	\end{equation}
	The Minkowski inequality hence shows that for all
	$ M, N \in \N $ with $ M \leq N $ it holds that
	\begin{equation}
	\begin{split}
	&
	\E \Bigg[
	\bigg\|
	\smallsum\limits_{n=M}^N \gamma_n (-A)^{-r} e_n
	\bigg\|_{L^p( \lambda_{(0,1)}; \R ) }^2
	\Bigg]
	\leq
	\| \gamma_1 \|_{ \L^p( \P; \R )}^2
	\bigg\|
	\smallsum\limits_{n=M}^N
	( \rho_n )^{-2r} | f_n |^2
	\bigg\|_{\L^{\nicefrac{p}{2}}( \P; \R)}
	\\
	&
	\leq
	\| \gamma_1 \|_{ \L^p( \P; \R )}^2
	\bigg(
	\smallsum\limits_{n=M}^N
	( \rho_n )^{-2r}
	\| f_n \|_{\L^p(\P; \R) }^2
	\bigg)
	.
	\end{split}
	\end{equation}
	This proves that for all $ M, N \in \N $
	with $ M \leq N $
	it holds that
	\begin{equation}
	\begin{split}
	& 
	\bigg\|
	\smallsum\limits_{n=M}^N \gamma_n (-A)^{-r} e_n
	\bigg\|_{ L^2( \P; L^p( \lambda_{(0,1)}; \R ) ) }^2
	\leq
	\| \gamma_1 \|_{ \L^p( \P; \R )}
	\bigg[
	\smallsum\limits_{n=M}^N
	( \rho_n )^{-2r}
	\| f_n \|_{\L^p(\P; \R)}^2
	\bigg]^{\nicefrac{1}{2}}
	\\
	&
	\leq
	\| \gamma_1 \|_{ \L^p( \P; \R )}
	\bigg[
	2
	\smallsum\limits_{n=M}^N 
	( \rho_n )^{-2r}
	\bigg]^{\nicefrac{1}{2}}
	=
	\| \gamma_1 \|_{ \L^p( \P; \R )}
	\tfrac{ \sqrt{2} }{ \pi^{2r} }
	\bigg[
	\smallsum\limits_{n=M}^N n^{-4r}
	\bigg]^{ \nicefrac{1}{2}}
	\\
	&
	\leq
	\| \gamma_1 \|_{ \L^p( \P; \R )} 
	\bigg[
	\smallsum\limits_{n=M}^N n^{-4r}
	\bigg]^{ \nicefrac{1}{2}}
	=
	\bigg[
	\int_\R
	\tfrac{ | x |^p }{\sqrt{2\pi}} \, e^{ - \nicefrac{x^2}{2} } \, dx
	\bigg]^{\nicefrac{1}{p}}
	\bigg[ \sum\limits_{n=M}^N n^{-4r} \bigg]^{ \nicefrac{1}{2} } < \infty.
	\end{split}
	\end{equation}
	This and, e.g, \cite[Theorem~3.20]{VanNeerven2010Gamma} completes the proof of Lemma~\ref{lemma:InterpolationSpaces}.
\end{proof}
\begin{lemma}
	\label{lemma:B}
	Consider the notation in Subsection~\ref{sec:notation},
	let $ d \in \N $,
	$ p \in (2,\infty) $,
	$ \beta \in (- \infty, - \frac{d}{2p} ] $,
	let $ ( H, \langle \cdot, \cdot \rangle_H, \left \| \cdot \right \|_H ) 
	=
	( L^2(\lambda_{(0,1)^d}; \R), 
	\langle \cdot, \cdot \rangle_{ L^2(\lambda_{(0,1)^d}; \R ) },
	\left \| \cdot \right \|_{ L^2(\lambda_{(0,1)^d}; \R ) } )
	$,
	let $ A \colon D(A) \subseteq H \to H $
	be the Laplacian with Dirichlet boundary conditions on $ H $,
	and let
	$ ( H_r, \langle \cdot, \cdot \rangle_{H_r}, 
	\left \| \cdot \right \|_{H_r} ) $, $ r \in \R $,
	be a family of interpolation spaces
	associated to $ - A $.
	Then
	\begin{enumerate}[(i)]
		\item\label{item:exist.multiplicator} there exists a unique bounded linear operator
		$ B \in L( L^{ p } ( \lambda_{(0,1)^d}; \R) , L ( H, H_\beta ) ) $
		which satisfies for all
		$ v \in L^{ \max \{p, 4 \} }( \lambda_{(0,1)^d}; \R ) $,
		$ u \in L^4( \lambda_{(0,1)^d}; \R ) $
		that
		\begin{equation} 
		( B v ) u = v \cdot u 
		\end{equation}
		and
		\item it holds that
		\begin{equation}
		\| B \|_{ L ( L^p( \lambda_{(0,1)^d}; \R), L (H, H_\beta) ) }
		\leq
		\sup_{ w \in H_{-\beta} \backslash \{0\} } 
		\bigg[
		\tfrac{ \| w \|_{ L^{2p/(p-2)}( \lambda_{(0,1)^d}; \R ) } }
		{ \| w \|_{ H_{ -\beta} } }
		\bigg]
		< \infty.
		\end{equation}
	\end{enumerate}
\end{lemma}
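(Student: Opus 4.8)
The plan is to realise $B$ as the (continuously extended) pointwise multiplication operator and to control it by the duality between $H_\beta$ and $H_{-\beta}$. Write $q := \frac{2p}{p-2} \in (2,\infty)$, so that $\frac1p + \frac12 + \frac1q = 1$, and set $C := \sup_{ w \in H_{-\beta} \setminus \{0\} } \frac{ \| w \|_{ L^q(\lambda_{(0,1)^d};\R) } }{ \| w \|_{ H_{-\beta} } }$. The first step is to show that $C < \infty$, and this is the only place where the hypothesis $\beta \leq -\frac{d}{2p}$ enters in an essential way: since $-\beta \geq \frac{d}{2p} = \frac{d}{2}\big( \frac12 - \frac1q \big)$, the negative fractional power $(-A)^\beta = (-A)^{-(-\beta)}$ maps $H = L^2(\lambda_{(0,1)^d};\R)$ boundedly into $L^q(\lambda_{(0,1)^d};\R)$ by the Sobolev embedding theorem for the Dirichlet Laplacian on the bounded domain $(0,1)^d$; equivalently, $H_{-\beta} = D((-A)^{-\beta})$ embeds continuously into $L^q(\lambda_{(0,1)^d};\R)$, i.e.\ $C < \infty$. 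I expect the borderline case $\beta = -\frac{d}{2p}$, where the relevant embedding is the critical Sobolev embedding of regularity-$\tfrac{d}{p}$ functions into $L^q$, to be the only slightly delicate point; it still holds since $q < \infty$ and the domain is bounded.

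Given this, I would proceed by H\"older's inequality. For every $v \in L^p(\lambda_{(0,1)^d};\R)$ the bilinear form $H \times H_{-\beta} \ni (u,w) \mapsto \int_{(0,1)^d} v\,u\,w \, d\lambda_{(0,1)^d} \in \R$ is well defined and, by H\"older with exponents $p$, $2$, $q$ together with the embedding of the first step, satisfies $\big| \int_{(0,1)^d} v\,u\,w\, d\lambda_{(0,1)^d} \big| \leq \| v \|_{L^p(\lambda_{(0,1)^d};\R)} \| u \|_H \| w \|_{ L^q(\lambda_{(0,1)^d};\R) } \leq C \, \| v \|_{L^p(\lambda_{(0,1)^d};\R)} \| u \|_H \| w \|_{H_{-\beta}}$. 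Since $-A$ is self-adjoint and positive, it is well known that $H_\beta$ can be identified with the continuous dual of $H_{-\beta}$ via the pairing extending the $H$-inner product; hence for each such $v$ and each $u \in H$ there is a unique element $(Bv)u \in H_\beta$ with $\langle (Bv)u, w \rangle_{ H_\beta, H_{-\beta} } = \int_{(0,1)^d} v\,u\,w\, d\lambda_{(0,1)^d}$ for all $w \in H_{-\beta}$, and $\| (Bv)u \|_{H_\beta} \leq C \, \| v \|_{L^p(\lambda_{(0,1)^d};\R)} \| u \|_H$. The maps $u \mapsto (Bv)u$ and $v \mapsto (u \mapsto (Bv)u)$ are linear by construction, so $B := ( v \mapsto (u \mapsto (Bv)u) )$ defines an element of $L(L^p(\lambda_{(0,1)^d};\R), L(H, H_\beta))$ with $\|B\| \leq C$, which is the asserted norm bound.

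It then remains to verify the multiplication identity and uniqueness. For $v \in L^{\max\{p,4\}}(\lambda_{(0,1)^d};\R)$ and $u \in L^4(\lambda_{(0,1)^d};\R)$, H\"older's inequality shows $v \cdot u \in L^2(\lambda_{(0,1)^d};\R) = H$, which in turn embeds continuously into $H_\beta$ because $\beta < 0$; moreover, for every $w \in H_{-\beta} \subseteq H$ one has $\langle v\cdot u, w \rangle_H = \int_{(0,1)^d} v\,u\,w\, d\lambda_{(0,1)^d} = \langle (Bv)u, w \rangle_{H_\beta, H_{-\beta}}$, and since the $H_\beta$/$H_{-\beta}$-pairing extends $\langle \cdot, \cdot \rangle_H$ this forces $(Bv)u = v\cdot u$ in $H_\beta$, establishing the stated formula. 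Finally, $L^{\max\{p,4\}}(\lambda_{(0,1)^d};\R)$ is dense in $L^p(\lambda_{(0,1)^d};\R)$ and $L^4(\lambda_{(0,1)^d};\R)$ is dense in $H$ (here we use that $\lambda_{(0,1)^d}$ is a finite measure), so any bounded operator in $L(L^p(\lambda_{(0,1)^d};\R), L(H,H_\beta))$ satisfying the stated identity agrees with $B$ on a dense set and hence equals $B$; this yields the uniqueness assertion and completes the proof. The only genuine work is the Sobolev-type embedding $H_{-\beta} \hookrightarrow L^q(\lambda_{(0,1)^d};\R)$ of the first step; everything else is H\"older's inequality together with routine duality and density arguments.
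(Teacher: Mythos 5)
Your proof is correct and follows essentially the same route as the paper: H\"older's inequality with exponents $\tfrac{2p}{p-2}$, $p$, $2$, the Sobolev embedding $H_{-\beta}\hookrightarrow L^{2p/(p-2)}(\lambda_{(0,1)^d};\R)$ guaranteed by $-2\beta\geq\tfrac{d}{p}$, and the duality between $H_\beta$ and $H_{-\beta}$, with density of $L^{\max\{p,4\}}$ in $L^p$ and of $L^4$ in $H$ handling extension and uniqueness. The only difference is presentational: you define $(Bv)u$ directly as a functional on $H_{-\beta}$ and then verify the multiplication identity on the dense subspaces, whereas the paper first defines the multiplication operator on the nicer spaces, derives the same bound via the identity $\|x\|_{H_\beta}=\sup_{w\in H_{-\beta}\setminus\{0\}}|\langle w,x\rangle_H|/\|w\|_{H_{-\beta}}$, and then extends by continuity.
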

\begin{proof}[Proof of Lemma~\ref{lemma:B}]
	Throughout this proof let
	\begin{equation} 
	M \colon L^{ \max \{p, 4 \} }( \lambda_{(0,1)^d}; \R) 
	\to
	L ( L^4( \lambda_{(0,1)^d}; \R ), H ) 
	\end{equation}
	be the function which satisfies for all
	$ v \in  L^{ \max \{p, 4 \} }( \lambda_{(0,1)^d}; \R)  $,
	$ u \in  L^4( \lambda_{(0,1)^d}; \R) $
	that
	$ M ( v ) u = v \cdot u $.
	Observe that for all 
	$ v \in  L^{ \max \{p, 4 \} }( \lambda_{(0,1)^d}; \R)  $,
	$ u \in  L^4( \lambda_{(0,1)^d}; \R) $
	it holds that
	\begin{equation}
	\begin{split}
	\| ( M ( v ) ) u \|_{H_\beta}
	&
	=
	\| v \cdot u \|_{H_\beta}
	= 
	\| ( - A )^\beta ( u \cdot v ) \|_H
	=
	\sup_{ w \in H \backslash \{0\} }
	\bigg[
	\tfrac{
		| \langle w, ( - A )^\beta ( v \cdot u ) \rangle_H |
	}{
	\| w \|_H
}
\bigg]
\\
&
=
\sup_{ w \in H \backslash \{ 0 \} }
\bigg[
\frac{
	| \langle ( - A )^\beta w, v \cdot u \rangle_H |
}{
\| ( - A )^{ - \beta } ( - A )^\beta w \|_H
}
\bigg]
.
\end{split}
\end{equation}
H\"older's inequality hence ensures that for all
$ v \in  L^{ \max \{p, 4 \} }( \lambda_{(0,1)^d}; \R)  $,
$ u \in  L^4( \lambda_{(0,1)^d}; \R) $
it holds that
\begin{equation}
\begin{split}
\| ( M ( v ) ) u \|_{H_\beta}
&
=
\sup_{ w \in H_{ - \beta } \backslash \{ 0 \} }
\bigg[
\tfrac{
	| \langle w, v \cdot u \rangle_H |
}{
\| ( - A)^{-\beta} w \|_H
}
\bigg]
\\
&
\leq
\sup_{ w \in H_{-\beta} \backslash \{ 0 \} }
\Bigg[
\tfrac{
	\| w \|_{ L^{1/ ( 1/2-1/p) } ( \lambda_{ (0,1)^d}; \R ) }
	\| v \|_{ L^p ( \lambda_{ (0,1)^d}; \R ) }
	\| u \|_H
}{
\| ( - A )^{-\beta} w \|_H
}
\Bigg]
\\
&
=
\bigg[
\sup_{ w \in H_{ - \beta } \backslash \{ 0 \} }
\tfrac{
	\| w \|_{ L^{ 2p/(p-2) } ( \lambda_{ (0,1)^d }; \R ) }
}{
\| ( - A )^{-\beta} w \|_H
}
\bigg]
\| v \|_{ L^p ( \lambda_{ (0,1)^d }; \R ) }
\| u \|_H
.
\end{split}
\end{equation}
Combining this and the Sobolev embedding theorem with the fact that
\begin{equation} 
( - 2 \beta ) - 0 = - 2 \beta \geq \tfrac{d}{p}
=
d \big [ \tfrac{ 1 }{ 2 } - [ \tfrac{1}{2} - \tfrac{1}{p} ] \big]
=
d \big [ \tfrac{ 1 }{ 2 } - \tfrac{1}{(2p/(p-2))} \big] 
\end{equation}
proves that
for all
$ v \in  L^{ \max \{p, 4 \} }( \lambda_{(0,1)^d}; \R)  $
it holds that
\begin{equation}
\begin{split}
\sup_{ u \in L^4 ( \lambda_{(0,1)^d}; \R) \backslash \{0 \} }
\bigg[
\tfrac{ \| ( M ( v ) ) u \|_{H_\beta} }
{ \| u \|_H  }
\bigg]
\leq
\underbrace{
	\bigg[
	\sup_{ w \in H_{ - \beta } \backslash \{ 0 \} }
	\tfrac{
		\| w \|_{ L^{ 2p/(p-2) } ( \lambda_{ (0,1)^d }; \R ) }
	}{
	\| ( - A )^{-\beta} w \|_H
}
\bigg]}_{< \infty}
\| v \|_{ L^p ( \lambda_{ (0,1)^d }; \R ) }
.
\end{split}
\end{equation}
This implies that there exists a unique function
$ \mathcal{M} \colon L^{ \max \{p, 4 \} } ( \lambda_{(0,1)^d}; \R) \to L ( H, H_\beta ) $
which satisfies for all
$ v \in L^{ \max \{p, 4\} }  ( \lambda_{(0,1)^d}; \R) $,
$ u \in L^4 ( \lambda_{(0,1)^d}; \R) $ that
\begin{equation}
\label{eq:defM1}
( \mathcal{M}(v))u
=
( M ( v ) )( u ) 
= v \cdot u
\end{equation}
and
\begin{equation}
\| \mathcal{M}( v ) \|_{L ( H, H_\beta)}
\leq
\bigg[
\sup_{ w \in H_{-\beta} \backslash \{0 \} }
\tfrac{ \| w \|_{L^{2p/(p-2)}( \lambda_{(0,1)^d}; \R ) } }
{ \| (-A)^{-\beta} w \|_H }
\bigg]
\| v \|_{ L^p ( \lambda_{ (0,1)^d}; \R ) } 
< \infty .
\end{equation}
This, in turn, assures that there exists a unique bounded linear
operator
\begin{equation} 
B \in L ( L^p ( \lambda_{ (0,1) }; \R ), L ( H, H_\beta)  ) 
\end{equation}
which satisfies for all
$ v \in L^{ \max \{p, 4\} }  ( \lambda_{(0,1)^d}; \R) $
that
\begin{equation}
\label{eq:defM2}
B(v) = \mathcal{M}(v)
\end{equation}
and
\begin{equation}
\label{eq:defM3}
\| B \|_{L( L^p ( \lambda_{ (0,1) }; \R ), L ( H, H_\beta)  ) }
\leq 
\sup_{ w \in H_{-\beta} \backslash \{0 \} }
\Bigg[
\tfrac{ \| w \|_{L^{2p/(p-2)}( \lambda_{(0,1)^d}; \R ) } }
{ \| (-A)^{-\beta} w \|_H }
\Bigg]
< \infty . 
\end{equation}
Combining~\eqref{eq:defM1}, \eqref{eq:defM2}, and~\eqref{eq:defM3}
completes the proof of Lemma~\ref{lemma:B}.
\end{proof}
\begin{lemma}
\label{lem:eps.beta.radonifying}
%	Consider the notation in Section~\ref{sec:notation}, 
Let $ \lambda_{(0,1)} \colon
\B ( ( 0,1) ) \to [0,\infty] $
be the Lebesgue-Borel
measure on $ (0,1) $,
	let $p\in[2,\infty)$, 
	$\varepsilon\in[0,\infty)$, 
	$\beta\in(-\infty,-\nicefrac{1}{4}- \varepsilon)$, 
	$ 
	( H, \langle \cdot, \cdot \rangle_H, \left \| \cdot \right \|_H ) 
	=
	( L^2(\lambda_{(0,1)}; \R), 
	\langle \cdot, \cdot \rangle_{ L^2(\lambda_{(0,1)}; \R ) } $,
	$
	\left \| \cdot \right \|_{ L^2(\lambda_{(0,1)}; \R ) } )
	$, 
	$
	(V,\left\|\cdot\right\|_V)
	=
	(\lpnb{p}{\lambda_{(0,1)}}{\R},
	\left\|\cdot\right\|_{\lpnb{p}{\lambda_{(0,1)}}{\R}})
	$,
	let $A\colon D(A)\subseteq H \to H$ be the Laplacian with Dirichlet boundary conditions on $H$, 
	let 
	$ ( H_r, \langle \cdot, \cdot \rangle_{H_r}, 
	\left \| \cdot \right \|_{H_r} ) $, $ r \in \R $,
	be a family of interpolation spaces
	associated to $ - A $, 
		let $ \mathcal{A}\colon D(\mathcal{A})\subseteq V \to V $ be the Laplacian with Dirichlet boundary conditions on $V$, 
	and let 
	$
	(V_r,\left\|\cdot\right\|_{V_r})
	$, 	$ r \in \R $, 
	be a family of interpolation spaces associated to $ -\mathcal{A} $.
	Then
	\begin{enumerate}[(i)]
	\item
	\label{item:H.V.embedding}
	there exists a unique continuous function 
	$\iota\colon H_{-\varepsilon} \to V_\beta$ 
	which satisfies for all 
	$ v \in V $ 
	that
	$ \iota (v) = v $,
	\item
	\label{item:H.V.radonifying}
	it holds that $ \iota \in \gamma(H_{-\varepsilon},V_\beta) $,
	and
	\item \label{item:gamma_norm}
	it holds that
	\begin{equation}
	\| \iota \|_{ \gamma(H_{-\varepsilon}, V_\beta ) } 
		\leq
		\bigg[
		\int_\R
		\tfrac{ | x |^p }{\sqrt{2\pi}} \, e^{ - \nicefrac{x^2}{2} } 
		\, dx
		\bigg]^{\nicefrac{1}{p}}
		\bigg[
		\smallsum\limits_{ n = 1 }^\infty
		n^{4(\beta+\varepsilon)}  
		\bigg]^{\nicefrac{1}{2}}
		< \infty
		.
	\end{equation}
	\end{enumerate}
\end{lemma}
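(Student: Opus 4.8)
The plan is to construct $ \iota $ by composing the $ \gamma $-radonifying operator produced by Lemma~\ref{lemma:InterpolationSpaces} with the canonical isometric isomorphisms of the two interpolation scales, and then to read off all three assertions from the ideal property in Lemma~\ref{lemma:multiplication_estimate}.

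First I would fix the spectral data common to $ A $ and $ \mathcal{A} $: for $ n \in \N $ let $ f_n \colon (0,1) \to \R $ be given by $ f_n(x) = \sqrt{2}\,\sin(n\pi x) $, let $ \rho_n = \pi^2 n^2 $, and let $ e_n = [ f_n ]_{\lambda_{(0,1)}, \B(\R)} $. Then $ e_n \in V = \lpnb{p}{\lambda_{(0,1)}}{\R} \subseteq H $, the family $ ( e_n )_{n \in \N} $ is an orthonormal basis of $ H $, each $ e_n $ is an eigenfunction of both $ -A $ and $ -\mathcal{A} $ with eigenvalue $ \rho_n $, and $ \operatorname{span}\{ e_n \colon n \in \N \} $ is dense in $ V $ (e.g.\ via Fej\'er's theorem, or since the sine system is a Schauder basis of $ \lpnb{p}{\lambda_{(0,1)}}{\R} $ for $ p \in (1,\infty) $). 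Hence all fractional powers of $ -A $ and of $ -\mathcal{A} $ act on the $ e_n $ by the corresponding power of $ \rho_n $, and the standard interpolation-scale identities make $ (-A)^{-\varepsilon} \colon H_{-\varepsilon} \to H $ and $ (-\mathcal{A})^{-\beta} \colon V \to V_\beta $ isometric isomorphisms with inverses $ (-A)^{\varepsilon} $ and $ (-\mathcal{A})^{\beta} $; in particular $ (-A)^{-\varepsilon} e_n = \rho_n^{-\varepsilon} e_n $ and $ (-\mathcal{A})^{-\beta} e_n = \rho_n^{-\beta} e_n $.

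Since $ \beta \in ( -\infty, -\nicefrac{1}{4} - \varepsilon ) $ we have $ r := -( \beta + \varepsilon ) \in ( \nicefrac{1}{4}, \infty ) $, so Lemma~\ref{lemma:InterpolationSpaces} applied with this $ r $ (and with $ p $ as in the present lemma) yields $ \Psi := \big( H \ni v \mapsto (-A)^{\beta + \varepsilon} v \in \lpnb{p}{\lambda_{(0,1)}}{\R} \big) \in \gamma( H, V ) $ with $ \Psi e_n = \rho_n^{\beta + \varepsilon} e_n $ for all $ n \in \N $ and $ \| \Psi \|_{ \gamma( H, V ) } \leq \big[ \int_\R \tfrac{ |x|^p }{ \sqrt{2\pi} } e^{ - \nicefrac{x^2}{2} } \, dx \big]^{ \nicefrac{1}{p} } \big[ \smallsum_{n=1}^\infty n^{ 4( \beta + \varepsilon ) } \big]^{ \nicefrac{1}{2} } < \infty $. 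I would then \emph{define} $ \iota := (-\mathcal{A})^{-\beta} \circ \Psi \circ (-A)^{-\varepsilon} \colon H_{-\varepsilon} \to V_\beta $. By Lemma~\ref{lemma:multiplication_estimate} (with $ (-\mathcal{A})^{-\beta} \in L(V, V_\beta) $, $ \Psi \in \gamma(H, V) $, $ (-A)^{-\varepsilon} \in L(H_{-\varepsilon}, H) $ in the roles of $ A, B, C $) it follows that $ \iota \in \gamma( H_{-\varepsilon}, V_\beta ) $ and $ \| \iota \|_{ \gamma( H_{-\varepsilon}, V_\beta ) } \leq \| (-\mathcal{A})^{-\beta} \|_{ L(V, V_\beta) } \, \| \Psi \|_{ \gamma(H,V) } \, \| (-A)^{-\varepsilon} \|_{ L(H_{-\varepsilon}, H) } = \| \Psi \|_{ \gamma(H,V) } $, which establishes item~\eqref{item:H.V.radonifying} and the estimate in item~\eqref{item:gamma_norm}.

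It then remains to pin down $ \iota $ on $ V $ and to argue uniqueness. Evaluating the three factors on an eigenfunction gives $ \iota e_n = (-\mathcal{A})^{-\beta}\big( \Psi( \rho_n^{-\varepsilon} e_n ) \big) = (-\mathcal{A})^{-\beta}\big( \rho_n^{\beta} e_n \big) = e_n $, so $ \iota $ agrees with the inclusion $ V \hookrightarrow V_\beta $ on $ \operatorname{span}\{ e_n \colon n \in \N \} $; since $ \iota $ is bounded, the inclusions $ V \hookrightarrow H_{-\varepsilon} $ and $ V \hookrightarrow V_\beta $ are bounded, and $ \operatorname{span}\{ e_n \colon n \in \N \} $ is dense in $ V $, this forces $ \iota(v) = v $ for all $ v \in V $, which (together with the continuity of $ \iota $, it being $ \gamma $-radonifying) gives the existence statement of item~\eqref{item:H.V.embedding}; uniqueness follows because $ V = \lpnb{p}{\lambda_{(0,1)}}{\R} $ is dense in $ H = \lpnb{2}{\lambda_{(0,1)}}{\R} $ and $ H $ is dense in $ H_{-\varepsilon} $, so $ V $ is dense in $ H_{-\varepsilon} $ and two continuous extensions of the inclusion must coincide. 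The only genuinely delicate point I anticipate is the bookkeeping across the two interpolation scales — in particular the fact that $ (-A)^{\bullet} $ on $ \lpnb{2}{\lambda_{(0,1)}}{\R} $ and $ (-\mathcal{A})^{\bullet} $ on $ \lpnb{p}{\lambda_{(0,1)}}{\R} $ act identically on the common eigenfunctions, so that the three factors telescope on $ \operatorname{span}\{ e_n \colon n \in \N \} $ to the inclusion — whereas the quantitative content (the Gaussian-sum estimate producing the series $ \sum_{n=1}^\infty n^{4(\beta+\varepsilon)} $) is already packaged inside Lemma~\ref{lemma:InterpolationSpaces}, so no new computation is required.
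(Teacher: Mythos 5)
Your construction is essentially the paper's own proof: the same three-factor composition $\iota = (-\mathcal{A})^{-\beta}\circ\big[H\ni v\mapsto(-A)^{\beta+\varepsilon}v\in V\big]\circ(-A)^{-\varepsilon}$, with Lemma~\ref{lemma:InterpolationSpaces} (applied with $r=-(\beta+\varepsilon)>\nicefrac{1}{4}$) supplying the $\gamma$-radonifying middle factor and Lemma~\ref{lemma:multiplication_estimate} yielding $\iota\in\gamma(H_{-\varepsilon},V_\beta)$ together with the stated norm bound. The only (harmless) divergence is how the identity $\iota(v)=v$ for $v\in V$ is checked: you verify it on the common sine eigenfunctions and extend by linearity, continuity, and density of their span in $L^p$, whereas the paper invokes consistency of the fractional powers of $A$ and $\mathcal{A}$ on $V$, i.e.\ $(-A)^\beta v=(-\mathcal{A})^\beta v$ for $v\in V$.
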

\begin{proof}[Proof of Lemma~\ref{lem:eps.beta.radonifying}]
	Throughout this proof
	let $ \varphi \in L(H_{-\varepsilon}, H) $
	be the unique
	bounded linear operator which satisfies for all
	$ v \in H $ that
	\begin{equation}
	\varphi( v ) = (-A)^{-\varepsilon} v
	\end{equation}
	and let
	$ \phi \in L(V,V_\beta) $
	be the unique bounded linear operator
	which satisfies
	for all
	$ v \in V_{-\beta} $
	that
	\begin{equation}
	\phi(v) = (- \mathcal{A} )^{-\beta } v .
	\end{equation}
	Observe that Lemma~\ref{lemma:InterpolationSpaces}
	and the assumption that
	$ \beta + \varepsilon < - \nicefrac{1}{4} $
	prove
	\begin{enumerate}[(a)]
		\item \label{item:aaa} that $ \forall \, v \in H \colon 
		(-A)^{\beta + \varepsilon} v \in V $,
		\item \label{item:bbb} that
		$ ( H \ni v \mapsto (-A)^{\beta+ \varepsilon} v \in V ) 
		\in \gamma(H, V) $,
		and
		\item \label{item:ccc} that
		\begin{equation}
		\!\!
		\| 
		H \ni v \mapsto (-A)^{\beta + \varepsilon} v \in V
		\|_{\gamma(H, U)}
		\leq
		\bigg[
		\int_\R
		\tfrac{ |x|^p }{ \sqrt{2 \pi} }
		e^{-\nicefrac{x^2}{2}}
		\, dx 
		\bigg]^{\nicefrac{1}{2}}
		\bigg[
		\smallsum\limits_{n=1}^\infty
		n^{4(\beta + \varepsilon)}
		\bigg]^{\nicefrac{1}{2}}
		< \infty 
		.
		\end{equation}
	\end{enumerate}
	Note that item~\eqref{item:aaa} assures that there exist
	functions
	$ \Phi \colon H \to V $
	and 
	$ \iota \colon H_{-\varepsilon} \to V_\beta $
	which satisfy for all $ v \in H $ that
	\begin{equation}
	\Phi(v) = (-A)^{ \beta + \varepsilon } v
	\end{equation}
	and 
	\begin{equation}
	\iota = \phi \circ \Phi \circ \varphi .
	\end{equation}
	Observe that item~\eqref{item:bbb}
	and item~\eqref{item:ccc} 
	establish that
	$ \Phi \in \gamma(H, V) $
	and
	\begin{equation}
	\| \Phi \|_{\gamma(H,V)}
	\leq
	\bigg[
	\int_\R
	\tfrac{ |x|^p }{ \sqrt{2 \pi} }
	e^{-\nicefrac{x^2}{2}}
	\, dx 
	\bigg]^{\nicefrac{1}{2}}
	\bigg[
	\smallsum\limits_{n=1}^\infty
	n^{4(\beta + \varepsilon)}
	\bigg]^{\nicefrac{1}{2}}
	< \infty .
	\end{equation}
	Combining this, the fact that
	$ \varphi \in L(H_{-\varepsilon}, H) $,
	and the fact that
	$ \phi \in L(V, V_\beta) $
	with Lemma~\ref{lemma:multiplication_estimate} 
	ensures that
	$ \iota \in \gamma( H_{-\varepsilon}, V_\beta ) $
	and
	\begin{equation}
	\begin{split}
	\label{eq:Iota_estimate}
	\| \iota \|_{ \gamma(H_{-\varepsilon}, V_\beta)}
	&
	\leq
	\| \phi \|_{L(V, V_\beta)}
	\| \Phi \|_{\gamma(H,V)}
	\| \varphi \|_{L(H_{-\varepsilon}, H)}
	\\
	&
	=
	\| \Phi \|_{\gamma(H, V)}
	\leq
	\bigg[
	\int_\R
	\tfrac{ |x|^p }{ \sqrt{2 \pi} }
	e^{-\nicefrac{x^2}{2}}
	\, dx 
	\bigg]^{\nicefrac{1}{2}}
	\bigg[
	\sum_{n=1}^\infty
	n^{4(\beta + \varepsilon)}
	\bigg]^{\nicefrac{1}{2}}
	< \infty .
	\end{split}
	\end{equation}
	Next note that the fact that $ \forall \, v \in V,
	t \in [0,\infty) \colon 
	e^{tA} v = e^{ t \mathcal{A} } v $,
	e.g., \cite[item~(ii) of Theorem~1.10 in Chapter~II]{en00} and, e.g., \cite[Definition~5.25 in Chapter~II]{en00} 
	ensure that for all
	$ v \in V $ it holds that
	\begin{equation}
	( - A)^\beta v = ( - \mathcal{A} )^\beta v 
	.
	\end{equation}
	Hence, we obtain for all $ v \in V $ that
	\begin{equation}
	\begin{split}
	\iota(v) 
	&
	= 
	\phi( \Phi( \varphi( v ) ) ) 
	=
	\phi( (-A)^{\beta + \varepsilon} ( - A)^{-\varepsilon} v )
	=
	\phi( (-A)^\beta v ) 
	\\
	&
	=
	\phi( (-\mathcal{A} )^\beta v ) 
	=
	( - \mathcal{A} )^{-\beta} (- \mathcal{A} )^\beta v 
	=
	v. 
	\end{split}
	\end{equation}
	This and~\eqref{eq:Iota_estimate}
	complete the proof of Lemma~\ref{lem:eps.beta.radonifying}.
\end{proof}
\begin{proposition}
	\label{prop.nonlinearityB}
	Consider the notation in Subsection~\ref{sec:notation}, 
	let $ n \in \N $, $ \beta\in(-\infty,-\nicefrac{1}{4}) $, $ p\in( \max \{ \frac{n}{2(|\beta|-\nicefrac{1}{4})}, 2 n \} ,\infty) $, 
	$ 
	( H, \langle \cdot, \cdot \rangle_H, \left \| \cdot \right \|_H ) 
	=
	( L^2(\lambda_{(0,1)}; \R), 
	\langle \cdot, \cdot \rangle_{ L^2(\lambda_{(0,1)}; \R ) } $,
	$
	\left \| \cdot \right \|_{ L^2(\lambda_{(0,1)}; \R ) } )
	$, 
	$
	(V,\left\|\cdot\right\|_V)
	=
	(\lpnb{p}{\lambda_{(0,1)}}{\R},
	$
	$
	\left\|\cdot\right\|_{\lpnb{p}{\lambda_{(0,1)}}{\R}})
	$, 
	let $ b \colon \R \to \R $ be
	an $ n $-times continuously differentiable 
	function
	with globally bounded derivatives, 
	let $A\colon D(A)\subseteq H \to H$ be the Laplacian with Dirichlet boundary conditions on $H$, 
	let 
	$ ( H_r, \langle \cdot, \cdot \rangle_{H_r}, 
	\left \| \cdot \right \|_{H_r} ) $, $ r \in \R $,
	be a family of interpolation spaces
	associated to $ - A $, 
	let $ \mathcal{A}\colon D(\mathcal{A})\subseteq V \to V $ be the Laplacian with Dirichlet boundary conditions on $V$, 
	and let 
	$
	(V_r,\left\|\cdot\right\|_{V_r})
	$, 	$ r \in \R $, 
	be a family of interpolation spaces associated to $ -\mathcal{A} $.
	Then
	\begin{enumerate}[(i)]
		\item
		\label{item:exist.B}
		there exists a unique continuous function 
		$ B\colon V \to \gamma(\lpnb{2}{\lambda_{(0,1)}}{\R},V_\beta) $
		which satisfies for all $ u, v \in \L^{ 2p }(
		\lambda_{(0,1)}; \R ) $ 
		that 
		\begin{equation}
		B\big([v]_{\lambda_{(0,1)},\mathcal{B}(\R)}\big)
		[u]_{\lambda_{(0,1)},\mathcal{B}(\R)}
		=
		\big[\{b(v(x)) \cdot u(x)\}_{x\in(0,1)}\big]_{\lambda_{(0,1)},\mathcal{B}(\R)}
		,
		\end{equation}
		\item
		\label{item:B.smooth}
		it holds that $B$ is $n$-times continuously Fr\'{e}chet differentiable with globally bounded derivatives,
		\item \label{item:finite_constant}
		it holds for all
		$ \delta \in ( 
		\frac{1}{p}
		\max \{ 
		\frac{n}{2( |\beta |- 1/4)}
		,
		2n
		\}, 1) $
		that
		\begin{equation}
		\sup_{ w \in H_{
				\nicefrac{ n }{ (2 p \delta) }
			} \backslash \{ 0 \} }
		\Bigg[
		\tfrac{
			\| w \|_{
				L^{
					\nicefrac{2 p \delta }{ ( p \delta - 2 n ) }
				}( \lambda_{(0,1)}; \R)
			}
		}{
		\| w \|_{ H_{
				\nicefrac{ n }{ (2 p \delta) }
			} }
		}
		\Bigg]
		+
		\smallsum\limits_{ l = 1 }^\infty
		l^{4(\beta+\nicefrac{n}{(2p\delta)})}  
		< \infty 
		,
		\end{equation}
		\item
		\label{item:B.Cb}
		it holds for all $k\in\{1,\ldots,n\}$,
		$ \delta \in ( 
		\frac{1}{p}
		\max \{ 
		\frac{n}{2( |\beta |- 1/4)}
		,
		2n
		\}, 1) $ that 
		\begin{multline}
		\sup_{ v \in V}
		\|B^{(k)}(v)\|_{L^{(k)}(V,\gamma(H,V_\beta))} 
		\leq
		\bigg[
		\int_\R
		\tfrac{ | x |^p }{\sqrt{2\pi}} \, e^{ - \nicefrac{x^2}{2} } 
		\, dx
		\bigg]^{\nicefrac{1}{p}}
		\bigg[
		\smallsum\limits_{ l = 1 }^\infty
		l^{4(\beta+\nicefrac{n}{(2p\delta)})}  
		\bigg]^{\nicefrac{1}{2}}
		\\ 
		\cdot
		\Bigg[
		\sup_{ w \in H_{
				\nicefrac{ n }{ (2 p \delta) }
			} \backslash \{ 0 \} }
		\tfrac{
			\| w \|_{
				L^{
					\nicefrac{2 p \delta }{ ( p \delta - 2 n ) }
				}( \lambda_{(0,1)}; \R)
			}
		}{
		\| w \|_{ H_{
				\nicefrac{ n }{ (2 p \delta) }
			} }
		}
		\Bigg]
		\bigg[
		\sup_{ x \in \R }
		| b^{(k)} ( x ) | 
		\bigg]
		< \infty
		,
		\end{multline}
		and 
		\item
		\label{item:B:Lip}
		it holds for all $k\in\{1,\ldots,n\}$,
		$ \delta \in ( 
		\frac{1}{p}
		\max \{ 
		\frac{n}{2( |\beta |- 1/4)}
		,
		2n
		\}, 1) $,
		$ r \in [ \frac{p\delta}{n-k\delta}, \infty) $
		that 
		\begin{equation}
		\begin{split} 
		&
		\sup_{\substack{v,w \in \lpnb{\max\{r,p\}}{\lambda_{(0,1)}}{\R},\\ v\neq w}}
		\Bigg[
		\frac{
			\| B^{(k)}(v)
			-
			B^{(k)}(w)
			\|_{L^{(k)}(V,\gamma( H, V_\beta) 
				)}
		}{
		\|v-w\|_{\lpnb{r}{\lambda_{(0,1)}}{\R}}
	}
	\Bigg]
	\\&
	\leq
	\bigg[
	\int_\R
	\tfrac{ | x |^p }{\sqrt{2\pi}} \, e^{ - \nicefrac{x^2}{2} } 
	\, dx
	\bigg]^{\nicefrac{1}{p}}
	\bigg[
	\smallsum\limits_{ l = 1 }^\infty
	l^{4(\beta+\nicefrac{n}{(2p\delta)})}  
	\bigg]^{\nicefrac{1}{2}}
	\\
	&
	\quad
	\cdot
	\bigg[
	\sup_{ w \in H_{
			\nicefrac{ n }{ (2 p \delta) }
		} \backslash \{ 0 \} }
	\tfrac{
		\| w \|_{
			L^{
				\nicefrac{2 p \delta }{ ( p \delta - 2 n ) }
			}( \lambda_{(0,1)}; \R)
		}
	}{
	\| w \|_{ H_{
			\nicefrac{ n }{ (2 p \delta) }
		} }
	}
	\bigg] 
	\Bigg[
	\sup_{
		\substack{
			x, y \in \R, \\ x \neq y
		}
	} 
	\frac{
		| b^{(k)}(x) - b^{(k)}(y) |
	}{
	| x - y |
}
\Bigg]
.
\end{split}
\end{equation}		
\end{enumerate}
\end{proposition}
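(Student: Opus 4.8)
The plan is to factor $B$ through the Nemytskii operator associated with $b$, a bounded multiplication operator, and the $\gamma$-radonifying embedding supplied by Lemma~\ref{lem:eps.beta.radonifying}, and then to transport the differentiability and the quantitative bounds of Proposition~\ref{proposition:nonlinearityF} along this factorization.

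Fix an admissible $\delta$ and put $\varepsilon = \nicefrac{n}{(2p\delta)}$. Then the hypothesis $\delta > \nicefrac{n}{(2p(|\beta| - \nicefrac14))}$ is equivalent to $\beta < -\nicefrac14 - \varepsilon$, so Lemma~\ref{lem:eps.beta.radonifying} (with this $\varepsilon$) produces an operator $\iota \in \gamma(H_{-\varepsilon}, V_\beta)$ which is the identity on $V$ and satisfies $\| \iota \|_{\gamma(H_{-\varepsilon}, V_\beta)} \leq [ \int_\R \nicefrac{|x|^p}{\sqrt{2\pi}} \, e^{-\nicefrac{x^2}{2}} \, dx ]^{\nicefrac1p} [ \smallsum_{l=1}^\infty l^{4(\beta+\varepsilon)} ]^{\nicefrac12} < \infty$. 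Moreover $\delta > \nicefrac{2n}{p}$ gives $\nicefrac{p\delta}{n} > 2$ and $-\varepsilon = -\nicefrac{1}{(2(\nicefrac{p\delta}{n}))}$, so Lemma~\ref{lemma:B} (applied with $d = 1$, with $\nicefrac{p\delta}{n}$ in place of the lemma's ``$p$'', and with $-\varepsilon$ in place of the lemma's ``$\beta$'') furnishes a bounded linear multiplication operator $\mathbf{M} \in L(\lpnb{p\delta/n}{\lambda_{(0,1)}}{\R}, L(H, H_{-\varepsilon}))$ with $(\mathbf{M} g) u = g \cdot u$ for sufficiently integrable $g, u$ and $\| \mathbf{M} \|_{L(\lpnb{p\delta/n}{\lambda_{(0,1)}}{\R}, L(H, H_{-\varepsilon}))} \leq \sup_{w \in H_\varepsilon \setminus \{0\}} [ \| w \|_{\lpnb{2p\delta/(p\delta-2n)}{\lambda_{(0,1)}}{\R}} / \| w \|_{H_\varepsilon} ] < \infty$. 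Since $H_\varepsilon = H_{\nicefrac{n}{(2p\delta)}}$, this last bound together with the series above is precisely item~\eqref{item:finite_constant}. Finally, the ideal property in Lemma~\ref{lemma:multiplication_estimate} shows that $\Lambda(g) = \iota \circ (\mathbf{M} g)$ defines a bounded linear operator $\Lambda \in L(\lpnb{p\delta/n}{\lambda_{(0,1)}}{\R}, \gamma(H, V_\beta))$ with $\| \Lambda \| \leq \| \iota \|_{\gamma(H_{-\varepsilon}, V_\beta)} \| \mathbf{M} \|_{L(\lpnb{p\delta/n}{\lambda_{(0,1)}}{\R}, L(H, H_{-\varepsilon}))}$.

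Next I would invoke Proposition~\ref{proposition:nonlinearityF} with $k = l = d = 1$, with the bounded set $(0,1) \subseteq \R$, with $f = b$, with $\nicefrac{p\delta}{n}$ in place of the proposition's ``$p$'', and with $p$ in place of the proposition's ``$q$'' (admissible, since $\nicefrac{p\delta}{n} \geq 1$ and $p > p\delta = n \cdot \nicefrac{p\delta}{n}$ because $\delta < 1$). This yields a Nemytskii operator $F_b \colon \lpnb{p}{\lambda_{(0,1)}}{\R} \to \lpnb{p\delta/n}{\lambda_{(0,1)}}{\R}$ that is $n$-times continuously Fr\'echet differentiable with globally bounded derivatives, obeys $F_b^{(m)}(v)(u_1, \dots, u_m) = [ \{ b^{(m)}(v(x)) \, u_1(x) \cdots u_m(x) \}_{x \in (0,1)} ]_{\lambda_{(0,1)}, \B(\R)}$, and obeys the estimates in Proposition~\ref{proposition:nonlinearityF}\,\eqref{item:3}--\eqref{item:5}. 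Now define $B = \Lambda \circ F_b$. For $u, v \in \L^{2p}(\lambda_{(0,1)}; \R)$ one has $(b \circ v) \cdot u \in \L^p(\lambda_{(0,1)}; \R) = V$ (by H\"older's inequality, since $b$ is Lipschitz), so, as $\iota$ is the identity on $V$, $B([v]_{\lambda_{(0,1)}, \B(\R)})[u]_{\lambda_{(0,1)}, \B(\R)} = \iota((b \circ v) \cdot u) = [ \{ b(v(x)) \cdot u(x) \}_{x \in (0,1)} ]_{\lambda_{(0,1)}, \B(\R)}$, which is the identity claimed in item~\eqref{item:exist.B}; uniqueness follows because this formula determines $B$ on the dense subset $\{ [v]_{\lambda_{(0,1)}, \B(\R)} \colon v \in \L^{2p}(\lambda_{(0,1)}; \R) \}$ of $V$, using density of $\{ [u]_{\lambda_{(0,1)}, \B(\R)} \colon u \in \L^{2p}(\lambda_{(0,1)}; \R) \}$ in $H$ and continuity of $B$ and of the operators $B(v)$. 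This proves item~\eqref{item:exist.B}. Since $\Lambda$ is bounded linear and $F_b$ is $n$-times continuously Fr\'echet differentiable with globally bounded derivatives, the chain rule gives that $B$ is $n$-times continuously Fr\'echet differentiable with globally bounded derivatives and $B^{(k)}(v)(u_1, \dots, u_k) = \Lambda(F_b^{(k)}(v)(u_1, \dots, u_k))$ for all $k \in \{1, \dots, n\}$, which is item~\eqref{item:B.smooth}.

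It remains to establish the bounds \eqref{item:B.Cb} and \eqref{item:B:Lip}, and these follow by estimating the operator norm of $B^{(k)}(v) = \Lambda \circ F_b^{(k)}(v)$, respectively of $B^{(k)}(v) - B^{(k)}(w) = \Lambda \circ (F_b^{(k)}(v) - F_b^{(k)}(w))$, by $\| \Lambda \|$ times the relevant operator norm from Proposition~\ref{proposition:nonlinearityF}: for \eqref{item:B.Cb} one applies Proposition~\ref{proposition:nonlinearityF}\,\eqref{item:3} with $m = k$ and ``$r$'' $= p$ (admissible since $p \geq k p\delta / n$, i.e.\ $k\delta \leq n$, which holds as $\delta < 1 \leq \nicefrac{n}{k}$), and for \eqref{item:B:Lip} one applies Proposition~\ref{proposition:nonlinearityF}\,\eqref{item:4} with $m = k$, ``$s$'' $= p$, and ``$r$'' $= r$, noting that the constraint $\nicefrac1r + \nicefrac kp \leq \nicefrac{n}{(p\delta)}$ is precisely $r \geq \nicefrac{p\delta}{(n - k\delta)}$. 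In both cases one uses $\lambda_\R((0,1)) = 1$ and $\| b^{(k)}(x) \|_{L^{(k)}(\R, \R)} = | b^{(k)}(x) |$; substituting the explicit bounds for $\| \iota \|_{\gamma(H_{-\varepsilon}, V_\beta)}$ from Lemma~\ref{lem:eps.beta.radonifying} and for $\| \mathbf{M} \|$ from Lemma~\ref{lemma:B} into $\| \Lambda \| \leq \| \iota \| \| \mathbf{M} \|$ then reproduces the stated right-hand sides (recalling $H_\varepsilon = H_{\nicefrac{n}{(2p\delta)}}$ and $l^{4(\beta+\varepsilon)} = l^{4(\beta + \nicefrac{n}{(2p\delta)})}$). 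The only genuinely delicate point is the exponent bookkeeping: one must verify that the single choice $\varepsilon = \nicefrac{n}{(2p\delta)}$ simultaneously makes $\iota$ $\gamma$-radonifying, makes $\mathbf{M}$ bounded into $H_{-\varepsilon}$ (a borderline Sobolev embedding, valid because $\nicefrac{n}{(p\delta)} < \nicefrac12$ thanks to $\delta > \nicefrac{2n}{p}$), and is compatible with all admissibility conditions on ``$p$'', ``$q$'', ``$r$'', ``$s$'' in Proposition~\ref{proposition:nonlinearityF}; each of these reduces to the two standing inequalities $\delta > \nicefrac{2n}{p}$ and $\delta > \nicefrac{n}{(2p(|\beta| - \nicefrac14))}$ together with $\delta < 1$.
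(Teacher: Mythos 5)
Your proposal is correct and follows essentially the same route as the paper: the same factorization $B=\iota\circ M\circ(\text{Nemytskii operator of }b)$ built from Proposition~\ref{proposition:nonlinearityF} (with the parameters $\nicefrac{p\delta}{n}$ and $q=p$), Lemma~\ref{lemma:B} with exponent $-\nicefrac{n}{(2p\delta)}$, Lemma~\ref{lem:eps.beta.radonifying} with $\varepsilon=\nicefrac{n}{(2p\delta)}$, and the ideal property of Lemma~\ref{lemma:multiplication_estimate}, with the same choices $r=p$ resp.\ $s=p$ in the quantitative estimates. Your admissibility bookkeeping matches (and in places is more explicit than) the paper's argument, so no gap to report.
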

\begin{proof}[Proof of Proposition~\ref{prop.nonlinearityB}]
	Throughout this proof
	let
	$ \delta \in ( 
	\frac{1}{p}
	\max \{ 
	\frac{n}{2( |\beta |- 1/4)}
	,
	2n
	\}, 1) $
	and
	let $ \psi \colon V \to L^{\nicefrac{p\delta}{n}}(\lambda_{(0,1)}; \R) $
	be the function which satisfies for all
	$ v \in \L^p( \lambda_{(0,1)}; \R) $ that
		\begin{equation}
		\label{eq:Nr1}
		\psi([v]_{\lambda_{(0,1)},\mathcal{B}(\R)})
		=
		[
		\{ b(v(x)) \}_{x \in (0,1) }
		]_{ \lambda_{(0,1)}, \B(\R) }
		=
		[b \circ v]_{\lambda_{(0,1)},\mathcal{B}(\R)}
		.
		\end{equation}
Note that 	
item~\eqref{item:1} of Proposition~\ref{proposition:nonlinearityF}
(with
$ k = 1 $, $ l = 1 $, $ d = 1 $, $ n = n $,
$ p = \frac{ p \delta }{n} $,
$ q = p $,
$ \O = (0,1) $,
$ f = b $,
$ F = \psi $
in the notation of
item~\eqref{item:1} of
Proposition~\ref{proposition:nonlinearityF}) establishes that
\begin{equation}
 \label{eq:Nr2}
 \psi
 \in 
 C^n_b(V,\lpnb{\nicefrac{p \delta}{n}}{\lambda_{(0,1)}}{\R})
 .
\end{equation}
Moreover, observe that item~\eqref{item:3} 
of Proposition~\ref{proposition:nonlinearityF}
(with
$ k = 1 $,
$ l = 1 $,
$ d = 1 $,
$ n = n $,
$ p = \frac{p \delta}{n} $,
$ q = p $,
$ \O = (0,1) $,
$ f = b $,
$ F = \psi $,
$ m = k $,
$ r = p $
for
$ k \in \{ 1, \ldots, n \} $
in the notation of
item~\eqref{item:3} 
of
Proposition~\ref{proposition:nonlinearityF})
proves
that
for all
$ k \in \{ 1, \ldots, n \} $
it holds that
\begin{equation}
\begin{split}
\label{eq:77}
& 
\sup_{ v \in V }
\| \psi^{(k)}(v) \|_{L^{(k)}(V, L^{\nicefrac{p \delta}{n}} ( \lambda_{(0,1)}; \R  )  )} 
\leq
\sup_{ x \in \R }
| b^{(k)} ( x ) |
< \infty
.
\end{split}
\end{equation}		
In addition, we apply item~\eqref{item:4}
of
Proposition~\ref{proposition:nonlinearityF}
(with
$ k = 1 $,
$ l = 1 $,
$ d = 1 $,
$ n = n $,
$ p = \frac{p\delta}{n} $,
$ q = p $,
$ \O = (0,1) $,
$ f = b $,
$ F = \psi $,
$ m = k $,
$ r = r $,
$ s = p $,
$ v = v $,
$ w = w $
for
$ v, w \in \lpnb{\max\{r,p\}}{\lambda_{(0,1)}}{\R} $,
$ r \in [ \frac{p\delta}{n-k\delta}, \infty) $, 
$ k \in \{1, \ldots, n \} $
in the notation of
 item~\eqref{item:4}
 of
Proposition~\ref{proposition:nonlinearityF})
to obtain that for all
$ k \in \{ 1, \ldots, n\} $,
$ r \in [ \frac{p\delta}{n - k \delta }, \infty ) $ 
it holds that
\begin{equation}
\begin{split}
\label{eq:Nr3b}
& 
\sup_{\substack{v,w \in \lpnb{\max\{r,p\}}{\lambda_{(0,1)}}{\R},\\ v\neq w}}
\Bigg[
\frac{
	\| \psi^{(k)}(v)
	-
	\psi^{(k)}(w)
	\|_{L^{(k)}(V, L^{ \nicefrac{p\delta}{n}}(\lambda_{(0,1)}; \R)  )
	}
}{
\|v-w\|_{\lpnb{r}{\lambda_{(0,1)}}{\R}}
}
\Bigg]
\\
&
=
\sup_{\substack{v,w \in \lpnb{\max\{r,p\}}{\lambda_{(0,1)}}{\R},\\ v\neq w}} 
\sup_{  v_1, \ldots, v_k \in V \backslash \{0\}}
\Bigg[
\frac{	
	\| [ \psi^{(k)}(v) - \psi^{(k)}(w) ](v_1, \ldots, v_k)\|_{\lpnb{\nicefrac{p\delta}{n}}{\lambda_{(0,1)}}{\R}}
}
{
\|v-w\|_{\lpnb{r}{\lambda_{(0,1)}}{\R}} \cdot	\| v_1 \|_V \cdot \ldots \cdot \| v_k \|_V	
}
\Bigg]
\\
&
\leq
\sup_{
	\substack{
		x, y \in \R, \\ x \neq y
	}
} 
\Bigg[
\frac{
	| b^{(k)}(x) - b^{(k)}(y) |
}{
| x - y |
}
\Bigg]
.
\end{split}
\end{equation}
Moreover, note that for all
$ q \in [p,\infty) $,
$ v \in \L^q( \lambda_{(0,1)}; \R) $ it holds that
	\begin{equation}
	\begin{split}
	&
	\int_0^1
	| b ( v ( x ) ) |^q
	\, dx
	=
	\int_0^1
	\bigg|
	b(0)
	+
	\int_0^1
	b'(rv(x)) v(x) 
	\, dr
	\bigg|^q
	\, dx
	\\
	&
	\leq
	\int_0^1
	\bigg(
	| b(0) |
	+
	|v(x)|
	\sup_{ y \in \R } | b'( y ) |
	\bigg)^q
	\, dx
	\\
	&
	\leq
	2^{q-1}
	\int_0^1
	\bigg(
	| b(0) |^q
	+
	|v(x)|^q
	\sup_{ y \in \R } | b'( y ) |^q
	\bigg)
	\, dx
	\\
	&
	=
	2^{q-1} 
	\bigg(
	| b(0) |^q
	+
	\sup_{ y \in \R } | b'( y ) |^q
	\int_0^1
	|v(x)|^q
	\, dx
	\bigg)
	< \infty
	.
	\end{split}
	\end{equation}
	This proves that for all
	$ q \in [p,\infty) $, $ v \in L^q( \lambda_{(0,1)}; \R ) $
	it holds that
	\begin{equation}
	\label{eq:Nr3d}
	\psi(v) \in L^q(\lambda_{(0,1)}; \R ) 
	.
	\end{equation}
In the next step we observe that 
	Lemma~\ref{lemma:B} 
	(with
	$ d = 1 $,
	$ p = \frac{p \delta}{n} $,
	$ \beta = - \frac{ n }{ 2p\delta } $,
	$ A = A $
	in the notation of
	Lemma~\ref{lemma:B})
	assures that there exists a unique
	\begin{equation}
	\label{eq:Nr3X}
	M \in L ( L^{\nicefrac{p\delta}{n}}( \lambda_{(0,1)}; \R),
	L (H , H_{ - \nicefrac{n}{(2p\delta)} } ) ) 
	\end{equation}
	which satisfies for all
	$ v \in L^{ \max \{ \nicefrac{p\delta}{n}, 4 \} }
	( \lambda_{(0,1)}; \R) $,
	$ u \in L^4( \lambda_{(0,1)}; \R ) $
	 that
	\begin{equation}
	\label{eq:Nr3e}
	( M v ) u = v \cdot u 
	\end{equation}
	and
	\begin{equation}
	\label{eq:Nr3f}
	\| M \|_{ L ( L^{ \nicefrac{p\delta}{n}} ( \lambda_{(0,1)} ; \R),
		L( H, H_{ - \nicefrac{n}{(2p\delta)} } ) ) }
	\leq
		\sup_{ w \in H_{
				\nicefrac{ n }{ (2 p \delta) }
			} \backslash \{ 0 \} }
		\bigg[
		\tfrac{
			\| w \|_{
				L^{
					\nicefrac{2 p \delta }{ ( p \delta - 2 n ) }
				}( \lambda_{(0,1)}; \R)
			}
		}{
		\| w \|_{ H_{
				\nicefrac{ n }{ (2 p \delta) }
			} }
		}
		\bigg]
		<
		\infty
		.
	\end{equation}
Moreover, we note that H\"older's inequality shows that for all
$ u, v \in L^{2p}( \lambda_{(0,1)}; \R ) $
it holds that
\begin{equation}
\label{eq:Nr4}
( M v ) u \in V.
\end{equation}
Furthermore, we observe that 
 Lemma~\ref{lem:eps.beta.radonifying} 
	(with
	$ p = p $,
	$ \varepsilon = \frac{n}{2p\delta} $,
	$ \beta = \beta $,
	$ A = A $,
	$ \mathcal{A} = \mathcal{A} $
	in the notation of
	Lemma~\ref{lem:eps.beta.radonifying})
	and the fact that 
	\begin{equation}
	\beta + \tfrac{n}{(2p\delta)} 
	=
	- | \beta | + \tfrac{n}{(2p\delta)} < -\tfrac{1}{4}
	\end{equation} 
	yield that there exists a unique 
	\begin{equation}
	\label{eq:Unique_I}
	 \iota \in \gamma ( H_{- \nicefrac{n}{(2p\delta)}}, V_\beta ) 
	 \end{equation} 
		which satisfies for all 
		$ v \in V $ that 
		\begin{equation}
		\label{eq:Nr6}
		\iota ( v ) = v
		\end{equation}
		and
		\begin{equation}
		\label{eq:I_estimate}
		\| \iota \|_{ \gamma(H_{-\nicefrac{n}{(2p\delta)}},V_\beta) }
		\leq  
		\bigg[
		\int_\R
		\tfrac{ | x |^p }{\sqrt{2\pi}} \, e^{ - \nicefrac{x^2}{2} } 
		\, dx
		\bigg]^{\nicefrac{1}{p}}
		\bigg[
		\smallsum\limits_{ l = 1 }^\infty
		l^{4(\beta+\nicefrac{n}{(2p\delta)})}  
		\bigg]^{\nicefrac{1}{2}}
		< \infty
		.
		\end{equation}
		In addition, note that~\eqref{eq:Nr1},
		\eqref{eq:Nr3d},
		\eqref{eq:Nr3e},
		\eqref{eq:Nr4},
		and~\eqref{eq:Nr6}
		demonstrate that for all 
		$ u, v \in \L^{2p}( \lambda_{(0,1)}; \R) $
		it holds that 
		\begin{equation} 
		\label{eq:defined_on_dense_subset}
		\iota \big( M(\psi([v]_{\lambda_{(0,1)}, \B(\R) }))[ u ]_{ \lambda_{(0,1)}, \B( \R)}
		\big)
		=
		\big[
		\{
		b(v(x)) \cdot u (x) 
		\}_{ x \in (0,1) }
		\big]_{ \lambda_{(0,1)}, \B(\R) }
		.
		\end{equation}
		Next observe that
		Lemma~\ref{lemma:multiplication_estimate},
		\eqref{eq:Nr3X},
		\eqref{eq:Nr3f},
		\eqref{eq:Unique_I},
		and~\eqref{eq:I_estimate}
		establish that
		\begin{enumerate}[(a)]
		\item  
		for all
		$ v \in L^{ \nicefrac{p\delta}{n}}( \lambda_{(0,1)}; \R ) $
		it holds that
		$
		\iota \circ [ M( v ) ]
		=
		\iota M ( v ) \in \gamma (H, V_\beta)
		$
		and
		\begin{equation}  
		\begin{split} 
		\label{eq:Nr9}
		\| \iota M(v) \|_{\gamma(H, V_\beta)}
		\leq
		\| \iota \|_{\gamma( H_{- \nicefrac{n}{(2p\delta)}}, V_\beta )}
		\| M(v) \|_{ L ( H, H_{- \nicefrac{n}{(2p\delta)}} )}
		< \infty 
		\end{split}
		\end{equation}
		and
		\item 
		that
			\begin{equation}
			\label{eq:110}
			(
			L^{\nicefrac{p\delta}{n}}(\lambda_{(0,1)}; \R) \ni w
			\mapsto
			\iota M(w) \in \gamma( H, V_\beta ) 
			)
			\in
			L(
			L^{\nicefrac{p\delta}{n}}(\lambda_{(0,1)}; \R),
			\gamma(H, V_\beta)
			).
			\end{equation}
	\end{enumerate}
Combining this with~\eqref{eq:Nr2}
and the chain rule for differentiation 
implies that there exists a unique function
\begin{equation}
\label{eq:Nr100}
B \in C_b^n( V, \gamma(H, V_\beta) ) 
\end{equation}
which satisfies for all $ v \in V $, $ u \in H $ that
\begin{equation}
\label{eq:Nr101}
B(v) u = \iota \big( M( \psi( v ) ) u \big)
.
\end{equation}
This and~\eqref{eq:defined_on_dense_subset}
prove items~\eqref{item:exist.B}
and~\eqref{item:B.smooth}.
Next observe that~\eqref{eq:Nr3f}
and~\eqref{eq:I_estimate}
establish item~\eqref{item:finite_constant}.
It thus remains to prove items~\eqref{item:B.Cb}
and~\eqref{item:B:Lip}.
For this note that~\eqref{eq:Nr2}, \eqref{eq:110},
and the chain rule for differentiation
assure that
for all
$ k \in \{ 1, \ldots, n \} $,
$ v, v_1, \ldots, v_k \in V $,
$ u \in H $
it holds that
\begin{equation}
\label{eq:112} 
	B^{(k)}(v)(v_1,\ldots,v_k) (u)
	=
	\iota M(\psi^{(k)}(v)(v_1,\ldots,v_k))u
	. 
\end{equation}
Therefore, we obtain
that for all 
$ k \in \{1,\ldots,n\} $,
$ v \in V $ 
it holds that 
\begin{equation}
\begin{split}
& 
\|B^{(k)}(v)\|_{L^{(k)}(V,\gamma(H,V_\beta))}
=
\sup_{ v_1, \ldots, v_k \in V \backslash \{0 \} }
\frac{
	\| B^{(k)}(v) (v_1, \ldots, v_k )
	\|_{\gamma( H, V_\beta) 
	}
}{
\| v_1 \|_V \cdot \ldots \cdot \| v_k \|_V
}
\\
&
=
\sup_{ v_1, \ldots, v_k \in V \backslash \{0\} }
\frac{
	\| 
	\iota M(\psi^{(k)}(v)(v_1,\ldots,v_k))
	\|_{\gamma( H, V_\beta) 
	}
}{
\| v_1 \|_V \cdot \ldots \cdot \| v_k \|_V
}	
\\&\leq
\|\iota M\|_{L(\lpnb{\nicefrac{p\delta}{n}}{\lambda_{(0,1)}}{\R},\gamma(H,V_\beta))} 
\sup_{  v_1, \ldots, v_k \in V \backslash \{0\}}
\frac{	
	\|\psi^{(k)}(v)(v_1, \ldots, v_k)\|_{\lpnb{\nicefrac{p\delta}{n}}{\lambda_{(0,1)}}{\R}}
}
{
	\| v_1 \|_V \cdot \ldots \cdot \| v_k \|_V	
}
\\
&
\leq
\|
\iota  M
\|_{L(\lpnb{\nicefrac{p\delta}{n}}{\lambda_{(0,1)}}{\R},\gamma(H,V_\beta))} 
\| \psi^{(k)} ( v ) \|_{L^{(k)}( V, L^{ \nicefrac{p\delta}{n} }( \lambda_{(0,1)}; \R) ) } 
.
\end{split}
\end{equation}
This and~\eqref{eq:77}
ensure that for all
$ k \in \{ 1, \ldots, n \} $
it holds that
\begin{equation}
\begin{split}
\label{eq:114}
& 
\sup_{ v \in V}
\|B^{(k)}(v)\|_{L^{(k)}(V,\gamma(H,V_\beta))}
\leq
\|\iota M\|_{L(\lpnb{\nicefrac{p\delta}{n}}{\lambda_{(0,1)}}{\R},\gamma(H,V_\beta))} 
\bigg[
\sup_{ x \in \R }
| b^{(k)} ( x ) |
\bigg]
.
\end{split}
\end{equation}	
Combining this with~\eqref{eq:Nr9},
\eqref{eq:Nr3f},
and~\eqref{eq:I_estimate}
shows that for all
$ k \in \{1, \ldots, n \} $
it holds that
\begin{equation}
\begin{split}
\label{eq:115}
& 
\sup_{ v \in V }
\|B^{(k)}(v)\|_{L^{(k)}(V,\gamma(H,V_\beta))} 
\\
&
\leq
\| \iota \|_{ \gamma(H_{-\nicefrac{n}{(2p\delta)}},V_\beta) }
\| M \|_{
	L(\lpnb{\nicefrac{p \delta}{n}}{\lambda_{(0,1)}}{\R}, L(H,H_{-\nicefrac{n}{(2p\delta)}}))
}
\bigg[
\sup_{ x \in \R }
| b^{(k)} ( x ) |
\bigg]
\\
&
\leq
\bigg[
\int_\R
\tfrac{ | x |^p }{\sqrt{2\pi}} \, e^{ - \nicefrac{x^2}{2} } 
\, dx
\bigg]^{\nicefrac{1}{p}}
\bigg[
\smallsum\limits_{ l = 1 }^\infty
l^{4(\beta+\nicefrac{n}{(2p\delta)})}  
\bigg]^{\nicefrac{1}{2}}
\\
&
\quad
\cdot
\Bigg[
\sup_{ w \in H_{
		\nicefrac{ n }{ (2 p \delta) }
	} \backslash \{ 0 \} }
\tfrac{
	\| w \|_{
		L^{
			\nicefrac{2 p \delta }{ ( p \delta - 2 n ) }
		}( \lambda_{(0,1)}; \R)
	}
}{
\| w \|_{ H_{
		\nicefrac{ n }{ (2 p \delta) }
	} }
}
\Bigg]
\bigg[
\sup_{ x \in \R }
| b^{(k)} ( x ) |
\bigg]
< \infty
.
\end{split}
\end{equation}
This proves item~\eqref{item:B.Cb}.
Next note that~\eqref{eq:112} demonstrates
that for all
$ k \in \{ 1, \ldots, n \} $,
$ r \in [ \frac{p\delta}{n - k \delta }, \infty ) $, 
$ v, w \in \lpnb{\max\{r,p\}}{\lambda_{(0,1)}}{\R} $, 
$ v_1, \ldots, v_k \in V \setminus \{0\} $ 
it holds that
\begin{equation}
\begin{split}
&
\frac{
	\| ( B^{(k)}(v)
	-
	B^{(k)}(w) ) (v_1, \ldots, v_k )
	\|_{\gamma( H, V_\beta) 
	}
}{
\| v_1 \|_V \cdot \ldots \cdot \| v_k \|_V
}
\\
&
=
\frac{
	\| 
	\iota M( 
	[ \psi^{(k)}(v)
	-
	\psi^{(k)}(w)
	](v_1,\ldots,v_k))
	\|_{\gamma( H, V_\beta) 
	}
}{
\| v_1 \|_V \cdot \ldots \cdot \| v_k \|_V
}	
\\&\leq
\|\iota M\|_{L(\lpnb{\nicefrac{p\delta}{n}}{\lambda_{(0,1)}}{\R},\gamma(H,V_\beta))} \,
\frac{	
	\|
	[ \psi^{(k)}(v)
	-
	\psi^{(k)}(w)
	](v_1, \ldots, v_k)\|_{\lpnb{\nicefrac{p\delta}{n}}{\lambda_{(0,1)}}{\R}}
}
{
	\| v_1 \|_V \cdot \ldots \cdot \| v_k \|_V
}	
\\
&
\leq
\|\iota M\|_{L(\lpnb{\nicefrac{p\delta}{n}}{\lambda_{(0,1)}}{\R},\gamma(H,V_\beta))}
\| \psi^{(k)}(v) - \psi^{(k)}(w)
\|_{L^{(k)}( V, L^{ \nicefrac{p \delta }{n} }( \lambda_{(0,1)}; \R ) ) }
.
\end{split}
\end{equation}
This,
\eqref{eq:Nr3b}, 
and~\eqref{eq:Nr9}
assure that for all
$ k \in \{ 1, \ldots, n\} $,
$ r \in [ \frac{p\delta}{n - k \delta }, \infty ) $ 
it holds that
\begin{equation}
\begin{split}
\label{eq:NrR}
& 
\sup_{\substack{v,w \in \lpnb{\max\{r,p\}}{\lambda_{(0,1)}}{\R},\\ v\neq w}}
\Bigg[
\frac{
	\| B^{(k)}(v)
	-
	B^{(k)}(w)
	\|_{L^{(k)}(V,\gamma( H, V_\beta) )
	}
}{
\|v-w\|_{\lpnb{r}{\lambda_{(0,1)}}{\R}}
}
\Bigg]
\\
&
\leq
\|\iota M\|_{L(\lpnb{\nicefrac{p\delta}{n}}{\lambda_{(0,1)}}{\R},\gamma(H,V_\beta))} 
\Bigg[
\sup_{
	\substack{
		x, y \in \R, \\ x \neq y
	}
} 
\frac{
	| b^{(k)}(x) - b^{(k)}(y) |
}{
| x - y |
}
\Bigg]
\\
&
\leq 
\| \iota \|_{ \gamma(H_{-\nicefrac{n}{(2p\delta)}},V_\beta) }
\| M \|_{
	L(\lpnb{\nicefrac{p \delta}{n}}{\lambda_{(0,1)}}{\R}, L(H,H_{-\nicefrac{n}{(2p\delta)}}))
}
\Bigg[
\sup_{
	\substack{
		x, y \in \R, \\ x \neq y
	}
} 
\frac{
	| b^{(k)}(x) - b^{(k)}(y) |
}{
| x - y |
}
\Bigg]
.
\end{split}
\end{equation}
Combining~\eqref{eq:NrR}
with~\eqref{eq:I_estimate}
and~\eqref{eq:Nr3f}
establishes item~\eqref{item:B:Lip}.
The proof of Proposition~\ref{prop.nonlinearityB} is thus completed.
\end{proof}
\begin{corollary}
	\label{corollary:nonlinearityB}
	Consider the notation in Subsection~\ref{sec:notation}, 
	let $ n \in \N $, $ \beta\in(-\infty,-\nicefrac{1}{4}) $, $ p\in( \max \{ \frac{n+1}{2(|\beta|-\nicefrac{1}{4})}, 2 (n+1) \} ,\infty) $, 
	$
	(V,\left\|\cdot\right\|_V)
	=
	(\lpnb{p}{\lambda_{(0,1)}}{\R},
	$
	$
	\left\|\cdot\right\|_{\lpnb{p}{\lambda_{(0,1)}}{\R}})
	$, 
	let $ b \colon \R \to \R $ be
	an $ n $-times continuously differentiable 
	function
	with globally
	Lipschitz continuous and
	globally
	bounded 
	derivatives, 
	let $ A \colon D(A)\subseteq V \to V $ be the Laplacian with Dirichlet boundary conditions on $ V $, 
	and let 
	$
	(V_r,\left\|\cdot\right\|_{V_r})
	$, 	$ r \in \R $, 
	be a family of interpolation spaces associated to $ -A $.
	Then
	\begin{enumerate}[(i)]
		\item
		\label{item:exist_B}
		there exists a unique continuous function 
		$B\colon V \to \gamma(\lpnb{2}{\lambda_{(0,1)}}{\R},V_\beta)$
		which satisfies for all $v,u\in\lpn{2p}{\lambda_{(0,1)}}{\R}$
		that 
		\begin{equation}
		B\big([v]_{\lambda_{(0,1)},\mathcal{B}(\R)}\big)
		[u]_{\lambda_{(0,1)},\mathcal{B}(\R)}
		=
		\big[\{b(v(x)) \cdot u(x)\}_{x\in(0,1)}\big]_{\lambda_{(0,1)},\mathcal{B}(\R)}
		\end{equation}
		and
		\item
		\label{item:B_smooth}
		it holds that $B$ is $n$-times continuously Fr\'{e}chet differentiable
		with globally
		Lipschitz continuous and globally
		bounded derivatives.
		%
		%		%
		%			and 
		%			\item
		%			\label{item:B_Lip}
		%			it holds for all $k\in\{1,\ldots,n\}$ that 
		%			\begin{equation}
		%			\sup_{\substack{v,w\in V,\\ v\neq w}} \frac{
		%				\|B^{(k)}(v)-B^{(k)}(w)\|_{L^{(k)}(V,\gamma(\lpnb{2}{\lambda_{(0,1)}}{\R},V_\beta))}
		%			}{
		%			\|v-w\|_V
		%		}
		%		\leq
		%		.
		%		\end{equation}		
	\end{enumerate}
\end{corollary}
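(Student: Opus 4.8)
The plan is to reduce everything to Proposition~\ref{prop.nonlinearityB}. First I would introduce an auxiliary separable $\R$-Hilbert space, namely $(H,\langle\cdot,\cdot\rangle_H,\|\cdot\|_H)=(L^2(\lambda_{(0,1)};\R),\langle\cdot,\cdot\rangle_{L^2(\lambda_{(0,1)};\R)},\|\cdot\|_{L^2(\lambda_{(0,1)};\R)})$, let $\mathbb{A}$ be the Laplacian with Dirichlet boundary conditions on $H$, and let $(H_r)_{r\in\R}$ be a family of interpolation spaces associated to $-\mathbb{A}$; the operator denoted $A$ in this corollary then plays the role of the operator denoted $\mathcal{A}$ in Proposition~\ref{prop.nonlinearityB}, and $(V_r)_{r\in\R}$ is the corresponding interpolation scale. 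Since $p>\max\{\tfrac{n+1}{2(|\beta|-\nicefrac{1}{4})},2(n+1)\}$ clearly implies $p>\max\{\tfrac{n}{2(|\beta|-\nicefrac{1}{4})},2n\}$, Proposition~\ref{prop.nonlinearityB} is applicable with these data and the same $n$, $\beta$, $p$, $b$, $V$. Item~\eqref{item:exist.B} and item~\eqref{item:B.smooth} of Proposition~\ref{prop.nonlinearityB} then directly give item~\eqref{item:exist_B} of the corollary together with the facts that $B$ is $n$-times continuously Fr\'echet differentiable and that $B^{(1)},\dots,B^{(n)}$ are globally bounded.

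The only remaining task is to prove that $B^{(k)}$ is globally Lipschitz continuous for each $k\in\{1,\dots,n\}$. For $k\in\{1,\dots,n-1\}$ I would just invoke the mean value inequality together with the fact --- already contained in item~\eqref{item:B.smooth} of Proposition~\ref{prop.nonlinearityB} --- that $B^{(k)}$ possesses the globally bounded Fr\'echet derivative $B^{(k+1)}$; this yields that $B^{(k)}$ is globally Lipschitz without any use of the Lipschitz hypothesis on $b$. For the top-order derivative $k=n$ I would use item~\eqref{item:B:Lip} of Proposition~\ref{prop.nonlinearityB}. The point is that the hypothesis on $p$ is equivalent to $\tfrac{1}{p}\max\{\tfrac{n}{2(|\beta|-\nicefrac{1}{4})},2n\}<\tfrac{n}{n+1}$, so one may fix
\begin{equation}
\delta\in\Big(\tfrac{1}{p}\max\big\{\tfrac{n}{2(|\beta|-\nicefrac{1}{4})},2n\big\},\tfrac{n}{n+1}\Big],
\end{equation}
and then $\delta\leq\tfrac{n}{n+1}$ forces $\tfrac{p\delta}{n-n\delta}\leq p$, i.e.\ $p\in[\tfrac{p\delta}{n-n\delta},\infty)$. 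Applying item~\eqref{item:B:Lip} of Proposition~\ref{prop.nonlinearityB} with $k=n$, with this $\delta$, and with $r=p$, noting that $\lambda_{(0,1)}((0,1))=1$ so that the $L^r$-norm appearing there equals $\|\cdot\|_V$, using item~\eqref{item:finite_constant} of Proposition~\ref{prop.nonlinearityB} for the finiteness of the Gaussian moment, the $\ell^2$-sum and the Sobolev embedding constant, and using the hypothesis that $b^{(n)}$ is globally Lipschitz to bound $\sup_{x\neq y}\tfrac{|b^{(n)}(x)-b^{(n)}(y)|}{|x-y|}$, I obtain that $\sup_{v,w\in V,\,v\neq w}\tfrac{\|B^{(n)}(v)-B^{(n)}(w)\|_{L^{(n)}(V,\gamma(H,V_\beta))}}{\|v-w\|_V}<\infty$. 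This establishes the global Lipschitz continuity of $B^{(n)}$ and thereby item~\eqref{item:B_smooth}.

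I do not anticipate a real obstacle: the corollary is essentially a specialization of Proposition~\ref{prop.nonlinearityB}, and the only genuinely new ingredient is the short computation showing that the admissible interval for $\delta$ is non-empty and reaches down to $\tfrac{n}{n+1}$ --- which is precisely why the assumption on $p$ here is phrased with $n+1$ rather than with the $n$ occurring in Proposition~\ref{prop.nonlinearityB}. The only points requiring care are the consistent renaming of the two Dirichlet Laplacians and their interpolation scales when specializing the proposition, and keeping track of which $L^q$-norms the constants in item~\eqref{item:B:Lip} depend on (in particular the harmless use of $\lambda_{(0,1)}((0,1))=1$ to pass from the $L^p$-norm to the $V$-norm).
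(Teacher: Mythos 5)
Your proposal is correct and follows essentially the same route as the paper: both reduce the corollary to Proposition~\ref{prop.nonlinearityB} (with the auxiliary $L^2$-space and the renaming $\mathcal{A}=A$) via the key computation that the hypothesis on $p$ makes $\delta=\tfrac{n}{n+1}$ admissible and that then $\tfrac{p\delta}{n-k\delta}=\tfrac{p}{1+n-k}\leq p$, so that item~\eqref{item:B:Lip} applies with $r=p$. The only (harmless) difference is that the paper's estimate holds for every $k\in\{1,\ldots,n\}$ and so item~\eqref{item:B:Lip} is invoked for all derivatives directly, whereas you invoke it only for $k=n$ and obtain the Lipschitz continuity of $B^{(k)}$ for $k<n$ from the global boundedness of $B^{(k+1)}$ via the mean value inequality.
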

\begin{proof}[Proof of Corollary~\ref{corollary:nonlinearityB}]
	First, note that 
	for all $ k \in \{ 1, 2, \ldots, n \} $ it holds that
	\begin{equation}
	\begin{split} 
	&
	\tfrac{1}{p} \,
	\max \!\big\{
	\tfrac{n}{
		2(|\beta|-\nicefrac{1}{4})
	}
	,
	2n
	\big\}
	=
	\tfrac{n}{p}
	\max \! \big\{  \tfrac{1}{2(|\beta |- \nicefrac{1}{4})}, 2 \big\}
	\\
	&
	=
	\tfrac{n}{p(n+1)}
		\max \! \big\{  \tfrac{n+1}{2(|\beta |- \nicefrac{1}{4})}, 2(n+1) \big\}
	<
	\tfrac{n}{(n+1)}
	< 1
	\end{split}
	\end{equation}
	and 
	\begin{equation}
	\tfrac{
		p(\frac{n}{n+1})
	}{
	n-k(\frac{n}{n+1})
}
=
\tfrac{p}{(n+1)-k}
=
\tfrac{p}{1+n-k}
\leq
p
.
\end{equation}
Items~\eqref{item:exist.B}, \eqref{item:B.smooth},
\eqref{item:finite_constant}, and~\eqref{item:B:Lip}
of
Proposition~\ref{prop.nonlinearityB} 
(with 
$n=n$,
$\beta=\beta$,
$p=p$,
$b=b$,
$\mathcal{A}=A$,
$k=n$,
$\delta=\frac{n}{(n+1)}$,
$r=p$
in the notation of Proposition~\ref{prop.nonlinearityB})
therefore establish items~\eqref{item:exist_B}
and~\eqref{item:B_smooth}.
The proof of Corollary~\ref{corollary:nonlinearityB}
is thus completed.
\end{proof}

\section{Mild stochastic calculus in Banach spaces}
\label{sec:mildcalc}
In this section
we generalize the machinery in~\cite[Section~5]{JentzenKurniawan2015arXiv}
from separable Hilbert spaces to separable UMD Banach spaces
with type 2.
\subsection{Setting}
\label{sec:setting_mild_calculus}
Throughout this section we frequently assume the following setting.
Consider the notation in Subsection~\ref{sec:notation}, 
let $ t_0 \in [ 0, \infty ) $, 
$ T \in ( t_0, \infty ) $, 
$
\angle =
\left\{
(t_1, t_2) \in [ t_0, T ]^2 \colon
t_1 < t_2
\right\}
$,
let 
$
( \Omega, \mathcal{F}, \P )
$
be a probability space with a normal filtration
$ \mathbb{F} = ( \mathbb{F}_t )_{ t \in [ t_0, T ] } $,
let
$
\left( W_t \right)_{ t \in [ t_0, T ] }
$
be an $ \operatorname{Id}_U $-cylindrical
$ ( \Omega, \mathcal{F}, \P, \mathbb{F} ) $-Wiener
process,
let
$
(
\check{V}, 
\left\| \cdot \right\|_{ \check{V} }
)
$,
$
(
V, 
\left\| \cdot \right\|_V
)
$,
$
(
\hat{V}, 
\left\| \cdot \right\|_{ \hat{V} }
)
$, 
and 
$
( \V, 
\left\| \cdot \right\|_V
)
$
be separable
UMD
$  \R $-Banach spaces
with type $ 2 $
which satisfy
$
\check{V}
\subseteq
V
\subseteq
\hat{V}
$
continuously and densely, 
let
$
(
U,
\left< \cdot , \cdot \right>_U,
\left\| \cdot \right\|_U
)
$
be a separable $ \R $-Hilbert space,
let
$
\mathbb{U} \subseteq U
$
be an orthonormal basis of
$ U $,
and for every separable $ \R $-Banach space $ ( E, \left\| \cdot \right\|_E ) $ 
%  every probability space
%  $ ( \Omega, \mathcal{F}, \P ) $,
and every
$ a, b \in \R $, $ A \in \mathcal{B}( \R ) $,
$
X \in \mathcal{M}( \mathcal{B}( A ) \otimes \mathcal{F} , \mathcal{B}( E ) )
$
with $ a < b $,
$ (a,b) \subseteq A $,
and
$
\P\big(
\int_a^b \| X_s \|_E \, ds < \infty
\big) = 1
$
let
$
\int_a^b X_s \, {\bf ds}
\in
L^0( \P ; E )
$
be given by
$
\int_a^b X_s \, {\bf ds}
=
\big[
\int_a^b \mathbbm{1}_{ \{ \int_a^b \| X_u \|_E \, du < \infty \} } X_s \, ds
\big]_{
	\P , \mathcal{B}( E )
}
$.
\subsection{Mild It\^{o} processes}
\label{sec:mildprocesses}
\begin{definition}[Mild It\^{o} process]
	\label{def:mildIto}
	Consider the notation in Subsection~\ref{sec:notation},
	let 
	$
	(
	\check{V}, 
	\left\| \cdot \right\|_{ \check{V} }
	)
	$,
	$
	(
	V, 
	\left\| \cdot \right\|_{ V }
	)
	$,
	and
	$
	(
	\hat{V}, 
	\left\| \cdot \right\|_{ \hat{V} }
	)
	$ 
	be separable
	UMD
	$  \R $-Banach spaces
	with type $ 2 $
	which satisfy
	$
	\check{V}
	\subseteq
	V
	\subseteq
	\hat{V}
	$
	continuously and densely, 
	let
	$
	(
	U,
	\left< \cdot , \cdot \right>_U,
	\left\| \cdot \right\|_U
	)
	$
	be a separable $ \R $-Hilbert space,
	let $ t_0 \in [0,\infty) $, $ T \in (t_0, \infty) $, 
	let 
	$
	( \Omega, \mathcal{F}, \P )
	$
	be a probability space with a normal filtration
	$ \mathbb{F} = ( \mathbb{F}_t )_{ t \in [ t_0, T ] } $,
	and
	let
	$
	\left( W_t \right)_{ t \in [ t_0, T ] }
	$
	be an $ \operatorname{Id}_U $-cylindrical
	$ ( \Omega, \mathcal{F}, \P, \mathbb{F} ) $-Wiener
	process.
	Then we say that $ X $ is a mild It\^o process on
	$ ( \Omega, \mathcal{F}, \P, \mathbb{F}, W, 
	( \check{V}, \left \| \cdot \right \|_{\check{V}} ) $,
	$ ( V, \left \| \cdot \right \|_V ), 
	( \hat{V}, \left \| \cdot \right \|_{ \hat{V} } ) ) $
	with
	evolution family $ S $, mild drift $ Y $, and mild diffusion $ Z $
	(we say that $ X $ is a mild It\^o process 
	with
	evolution family $ S $, mild drift $ Y $, and mild diffusion $ Z $,
	we say that
	$ X $ is a mild It\^o process)
	if and only if it holds
	\begin{enumerate}[(i)]
		\item  that 
		$ X \in \mathbb{M}( [t_0, T ] \times \Omega, V ) $ 
		is an $ \mathbb{F} / \mathcal{B}( V ) $-predictable stochastic process,
		\item that 
		$ Y \in \mathbb{M}( [t_0, T ] \times \Omega, \hat{V} ) $ 
		is an $ \mathbb{F} / \mathcal{B}( \hat{V} ) $-predictable stochastic process,
		\item that 
		$ Z \in \mathbb{M}( [t_0, T] \times \Omega, \gamma (U, \hat{V}) ) $ 
		is an $ \mathbb{F} / \mathcal{B}( \gamma( U, \hat{V} ) ) $-predictable stochastic process,
		\item that 
		$
		S \in \mathbb{M}(
		\left\{
				(t_1, t_2) \in [ t_0, T ]^2 \colon
				t_1 < t_2
				\right\}, 
		L( \hat{V}, \check{V} )
		)
		$
		is a
		$
		\mathcal{B}( \left\{
		(t_1, t_2) \in [ t_0, T ]^2 \colon
		t_1 < t_2
		\right\} )
		/ $
		$
		\mathcal{S}( \hat{V}, \check{V} )
		$-measurable
		function which satisfies for all $ t_1, t_2, t_3 \in [ t_0, T ] $
		with $ t_1 < t_2 < t_3 $
		that
		$
		S_{ t_2, t_3 }
		S_{ t_1, t_2 }
		=
		S_{ t_1, t_3 }
		$,
		\item that $ \forall \, t \in (t_0, T] \colon \P ( \int_{ t_0 }^{ t }
		\|
		S_{ s, t } Y_s
		\|_{ \check{V} }
		+
		\|
		S_{ s, t } Z_s
		\|_{ \gamma( U, \check{V} ) }^2
		\,
		ds < \infty ) = 1 $, and
		\item that for all $ t \in (t_0,T] $
		it holds that
		\begin{equation}
		\label{eq:mildito}
		%	\forall \, t \in (t_0,T] \colon
		\!\!\!
		[ X_t ]_{\P, \mathcal{B}( V ) }
		=
		\bigg[
		S_{ t_0, t }   
		X_{ t_0 } 
		+ 
		\int_{ t_0 }^t
		\mathbbm{1}_{
			\{
			\int_{ t_0 }^t
			\|	S_{ s, t } \,
			Y_s \|_V
			\, ds
			< \infty 
			\}
		}
		S_{ s, t }  
		Y_s
		\, ds
		\bigg]_{ \P, \B(V) }
		+
		\int_{ t_0 }^t
		S_{ s, t }  
		Z_s
		\, dW_s
		.
		\end{equation}
	\end{enumerate}
\end{definition}

\begin{lemma}[Regularization of mild It\^{o} processes]
	\label{lemma:transformation}
	Assume the setting in Subsection~\ref{sec:setting_mild_calculus}
	and let
	$
	X \colon [ t_0, T ] \times \Omega
	\rightarrow V
	$
	be a mild It{\^o} process
	with evolution family
	$
	S \colon \angle
	\rightarrow L( \hat{V}, \check{V} )
	$,
	mild drift
	$
	Y \colon [ t_0, T ] \times
	\Omega \rightarrow \hat{V}
	$, 
	and mild
	diffusion
	$
	Z \colon [ t_0, T ] \times
	\Omega \rightarrow
	\gamma(U,\hat{V})
	$.
	Then  
	there exists an up to indistinguishability unique
	stochastic process 
	$
	\bar{X} \colon [ t_0, T ] \times \Omega \rightarrow \check{V}
	$ 
	with continuous sample paths
	which satisfies 
	$
	\forall \, 
	t \in [ t_0, T ) 
	\colon
	\P\big(
	\bar{X}_t
	=
	S_{ t, T } X_t
	\big) = 1
	$. 
\end{lemma}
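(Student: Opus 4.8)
The plan is to write down the process $\bar{X}$ explicitly by applying the operators $S_{\cdot,T}$ to the mild It\^o representation~\eqref{eq:mildito} of $X$ at the terminal time $T$ rather than at the running time $t$, and then to recognise the resulting right-hand side as a process with continuous sample paths in $\check{V}$. First I would observe that item~(v) of Definition~\ref{def:mildIto} applied with the time $T$ yields
\[
\P\!\left( \int_{t_0}^T \| S_{s,T} Y_s \|_{\check{V}} \, ds + \int_{t_0}^T \| S_{s,T} Z_s \|_{\gamma(U,\check{V})}^2 \, ds < \infty \right) = 1 .
\]
Combining this with the $\mathbb{F}/\mathcal{B}(\hat{V})$-predictability of $Y$, the $\mathbb{F}/\mathcal{B}(\gamma(U,\hat{V}))$-predictability of $Z$, the $\mathcal{S}(\hat{V},\check{V})$-measurability of $S$, and the joint measurability of the evaluation map $L(\hat{V},\check{V})\times\hat{V}\to\check{V}$ with respect to $\mathcal{S}(\hat{V},\check{V})\otimes\mathcal{B}(\hat{V})$ (cf.\ Subsection~\ref{sec:notation} and, e.g., \cite[Section~1.2]{dz92}), it follows that the map $([t_0,T)\times\Omega\ni(s,\omega)\mapsto S_{s,T}Y_s(\omega)\in\check{V})$ is a predictable process which is $\P$-a.s.\ Bochner integrable on $[t_0,T]$ and that the map $([t_0,T)\times\Omega\ni(s,\omega)\mapsto S_{s,T}Z_s(\omega)\in\gamma(U,\check{V}))$ is a predictable process which is $\P$-a.s.\ stochastically integrable on $[t_0,T]$, where here we use that $\check{V}$ is a separable UMD Banach space with type~$2$. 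I would then \emph{define} $\bar{X}\colon[t_0,T]\times\Omega\to\check{V}$ by
\[
\bar{X}_t = S_{t_0,T} X_{t_0} + \int_{t_0}^t \mathbbm{1}_{\{ \int_{t_0}^T \| S_{u,T} Y_u \|_{\check{V}} \, du < \infty \}} S_{s,T} Y_s \, ds + \int_{t_0}^t S_{s,T} Z_s \, dW_s , \qquad t\in[t_0,T],
\]
choosing for the last summand the (up to indistinguishability unique) continuous modification of the stochastic integral process. The first integral has absolutely continuous sample paths in $\check{V}$ and the second, by construction, has continuous sample paths in $\check{V}$, so $\bar{X}$ is a $\check{V}$-valued stochastic process with continuous sample paths.

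Next I would verify that $\bar{X}$ has the required property. For $t=t_0$ this is immediate, since both integrals over $[t_0,t_0]$ vanish and the continuous modification of the stochastic integral is $\P$-a.s.\ equal to $0$ at $t_0$, so that $\bar{X}_{t_0} = S_{t_0,T} X_{t_0}$ $\P$-a.s. For $t\in(t_0,T)$ I would apply the bounded linear operator $S_{t,T}\in L(\hat{V},\check{V})$ to both sides of~\eqref{eq:mildito}. Moving $S_{t,T}$ through the Bochner integral and through the stochastic integral (the latter being a standard property of the stochastic integral in UMD Banach spaces with type~$2$), invoking the evolution property $S_{t,T}S_{s,t}=S_{s,T}$ for $s\in(t_0,t)$ and $S_{t,T}S_{t_0,t}=S_{t_0,T}$ from item~(iv) of Definition~\ref{def:mildIto}, and using that, by item~(v) together with the continuous embeddings $\check{V}\subseteq V\subseteq\hat{V}$, the indicator functions occurring in~\eqref{eq:mildito} and in the definition of $\bar{X}$ both equal $1$ $\P$-a.s., one obtains $S_{t,T}X_t = \bar{X}_t$ $\P$-a.s. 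This proves that $\P(\bar{X}_t = S_{t,T} X_t)=1$ for all $t\in[t_0,T)$.

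For uniqueness, suppose $\bar{X}^{(1)}$ and $\bar{X}^{(2)}$ both have continuous sample paths in $\check{V}$ and both satisfy the property. Then $\P(\bar{X}^{(1)}_t=\bar{X}^{(2)}_t)=1$ for every $t$ in the countable dense set $[t_0,T)\cap\mathbb{Q}$, hence $\P(\forall\,t\in[t_0,T)\cap\mathbb{Q}\colon\bar{X}^{(1)}_t=\bar{X}^{(2)}_t)=1$, and the continuity of the sample paths together with the density of $[t_0,T)\cap\mathbb{Q}$ in $[t_0,T]$ upgrades this to $\P(\forall\,t\in[t_0,T]\colon\bar{X}^{(1)}_t=\bar{X}^{(2)}_t)=1$, i.e.\ $\bar{X}^{(1)}$ and $\bar{X}^{(2)}$ are indistinguishable. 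I expect the main technical obstacle to be the careful justification that $s\mapsto S_{s,T}Y_s$ and $s\mapsto S_{s,T}Z_s$ are genuinely predictable processes with values in $\check{V}$ and in $\gamma(U,\check{V})$, respectively, that is, handling the interplay between the $\mathcal{S}(\hat{V},\check{V})$-measurability of $S$ and the predictability of $Y$ and $Z$, together with the associated verification that $S_{t,T}$ may be moved inside the stochastic integral; once these measurability points are settled, the remaining arguments are routine.
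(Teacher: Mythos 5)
Your proposal is correct and follows essentially the same route as the paper: define $\bar{X}$ via the $S_{\cdot,T}$-evolved integral representation (with a continuous version of the stochastic integral), identify $\bar{X}_t$ with $S_{t,T}X_t$ for $t\in[t_0,T)$ by applying $S_{t,T}\in L(\hat{V},\check{V})$ to the mild It\^o representation at time $t$, pulling it through the Bochner and stochastic integrals and using the cocycle property of $S$, and conclude uniqueness from sample-path continuity. Your additional attention to the predictability of $s\mapsto S_{s,T}Y_s$ and $s\mapsto S_{s,T}Z_s$ is a point the paper passes over implicitly, but it does not change the structure of the argument.
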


\begin{proof}[Proof of Lemma~\ref{lemma:transformation}]
	The assumption that $ X $ is a mild It\^{o} process, in particular, 
	ensures that  
	$
	\P ( \int_{ t_0 }^T
	\left\| S_{ s, T } Y_s \right\|_{
		\check{V}
	}
	+
	\left\| S_{ s, T } Z_s \right\|_{
		\gamma( U, \check{V} )
	}^2
	ds < \infty ) = 1
	$.
	This implies that 
	there exists a stochastic 
	process 
	$ 
	\bar{X} \colon [ t_0, T ] \times \Omega \to \check{V}
	$
	with continuous sample paths
	which satisfies for all 
	$ t \in [ t_0, T ] $ that
	\begin{equation}
	\label{eq:semiX}
	[ \bar{X}_t ]_{\P, \mathcal{B}( \check{V}) }
	=
	[ S_{ t_0, T } \, X_{ t_0 }  ]_{\P, \mathcal{B}( \check{V}) }
	+
	\int_{ t_0 }^t
	S_{ s, T } \, Y_s \, { \bf ds }
	+
	\int_{ t_0 }^t
	S_{ s, T } \, Z_s \, dW_s
	.
	\end{equation}
	Next observe that Definition~\ref{def:mildIto}
	ensures  
	for all $ t \in ( t_0, T ) $ that
	\begin{equation}
	\begin{split}
	&
	[ S_{ t_0, T } \, X_{ t_0 }  ]_{\P, \mathcal{B}( \check{V}) }
	+
	\int_{ t_0 }^t
	S_{ s, T } \, Y_s \, { \bf ds }
	+
	\int_{ t_0 }^t
	S_{ s, T } \, Z_s \, dW_s
	\\ & =
	S_{ t, T }
	\left(
	[ S_{ t_0, t } \, X_{ t_0 }  ]_{\P, \mathcal{B}( \check{V}) }
	+
	\int_{ t_0 }^t
	S_{ s, t } \,
	Y_s \, { \bf ds }
	+
	\int_{ t_0 }^t
	S_{ s, t } \,
	Z_s \, dW_s
	\right)
	=
	S_{ t, T } \,
	[ X_t  ]_{\P, \mathcal{B}( \check{V}) }
	.
	\end{split}
	\end{equation}
	Hence, we obtain
	for all $ t \in [ t_0, T ) $
	that
	\begin{equation}
	\label{eq:fact}
	\begin{split}
	&
	[ S_{ t_0, T } \, X_{ t_0 }  ]_{\P, \mathcal{B}( \check{V}) }
	+
	\int_{ t_0 }^t
	S_{ s, T } \, Y_s \, { \bf ds }
	+
	\int_{ t_0 }^t
	S_{ s, T } \, Z_s \, dW_s
	=
	[ S_{ t, T } \,
	X_t  ]_{\P, \mathcal{B}( \check{V}) }
	.
	\end{split}
	\end{equation}
	Combining this and~\eqref{eq:semiX}
	shows that for all
	$ t \in [ t_0, T ) $ it holds that
	\begin{equation}
	\label{eq:semiX_2}
	[ \bar{X}_t  ]_{\P, \mathcal{B}( \check{V}) } 
	=
	[ S_{ t_0, T } \, X_{ t_0 }  ]_{\P, \mathcal{B}( \check{V}) }
	+
	\int_{ t_0 }^t
	S_{ s, T } \, Y_s \, { \bf ds }
	+
	\int_{ t_0 }^t
	S_{ s, T } \, Z_s \, dW_s
	= 
	[ S_{ t, T } X_t  ]_{\P, \mathcal{B}( \check{V}) } 
	.
	\end{equation}
	Moreover, observe that for all 
	stochastic processes 
	$ A, B \colon [0,T] \times \Omega \to \check{V} $
	with continuous sample paths
	which satisfy
	$
	\forall \, t \in [t_0,T)
	\colon
	\P\big(
	A_t = B_t
	\big) = 1
	$
	it holds that
	$
	\P\big(
	\forall \, t \in [ t_0, T ] \colon
	A_t = B_t
	\big)
	= 1
	$.
	Combining this with \eqref{eq:semiX_2} 
	completes the proof of 
	Lemma~\ref{lemma:transformation}.
\end{proof}
\begin{lemma}[Regularization of mild It\^{o} processes]
	\label{lemma:transformation2}
	Assume the setting in Subsection~\ref{sec:setting_mild_calculus}, let
	$
	X \colon [ t_0, T ] \times \Omega
	\rightarrow V
	$
	be a mild It{\^o} process
	with evolution family
	$
	S \colon \angle
	\rightarrow L( \hat{V}, \check{V} )
	$,
	mild drift
	$
	Y \colon [ t_0, T ] \times
	\Omega \rightarrow \hat{V}
	$, 
	and mild
	diffusion
	$
	Z \colon [ t_0, T ] \times
	\Omega \rightarrow
	\gamma(U,\hat{V})
	$,
	and let 
	$
	\bar{X} \colon [ t_0, T ] \times \Omega \rightarrow \check{V}
	$ 
	be a stochastic process
	with continuous sample paths
	which satisfies
	$ 
	\forall \, t \in [ t_0, T ) \colon
	\P\big(
	\bar{X}_t
	=
	S_{ t, T } X_t
	\big) = 1
	$.
	Then  
	\begin{enumerate}[(i)]
		\item
		\label{item:transformation_i} 
		it holds
		that
		$ \bar{X} $ 
		is 
		$ \mathbb{F}/\B( \check{V} ) $-predictable,
		\item \label{item:transfortmation2}
		it holds 
		that
		$ \P( 
		\bar{X}_T = X_T
		) = 1
		$,
		\item \label{item:well_defined} 
		it holds
		that
		$
		\P ( \int_{ t_0 }^T
		\left\| S_{ s, T } Y_s \right\|_{
			\check{V}
		}
		+
		\left\| S_{ s, T } Z_s \right\|_{
			\gamma( U, \check{V} )
		}^2
		ds < \infty ) = 1
		$,
		and
		\item \label{eq:SomeIto}
		it holds
		that
		\begin{equation}
		\forall \, t \in [t_0,T] \colon
		[ \bar{X}_t ]_{\P, \mathcal{B}( \check{V}) }
		=
		[
		S_{ t_0, T } \,
		X_{ t_0 }
		]_{\P, \mathcal{B}( \check{V}) }
		+
		\int_{ t_0 }^t
		S_{ s, T } \,
		Y_s
		\, { \bf ds }
		+
		\int_{ t_0 }^t
		S_{ s, T } \,
		Z_s
		\, dW_s
		.
		\end{equation}
	\end{enumerate}
\end{lemma}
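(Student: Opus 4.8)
The plan is to reduce everything to the process already constructed in the proof of Lemma~\ref{lemma:transformation}, together with the indistinguishability statement established there. First, item~\eqref{item:well_defined} is immediate: it is exactly property~(v) in Definition~\ref{def:mildIto} evaluated at $t = T \in (t_0,T]$. In particular the Bochner integral $\int_{t_0}^t S_{s,T} Y_s \, { \bf ds }$ and the stochastic integral $\int_{t_0}^t S_{s,T} Z_s \, dW_s$ are well-defined $\check{V}$-valued random variables for every $t \in [t_0,T]$, and, exactly as in~\eqref{eq:semiX}, there is a continuous-sample-path process $\hat{X} \colon [t_0,T] \times \Omega \to \check{V}$ with $[\hat{X}_t]_{\P,\mathcal{B}(\check{V})} = [S_{t_0,T}X_{t_0}]_{\P,\mathcal{B}(\check{V})} + \int_{t_0}^t S_{s,T} Y_s \, { \bf ds } + \int_{t_0}^t S_{s,T} Z_s \, dW_s$ for all $t \in [t_0,T]$. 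The computation leading to~\eqref{eq:fact} (which uses only that $X$ is a mild It\^o process) then shows $\P(\hat{X}_t = S_{t,T}X_t) = 1$ for every $t \in [t_0,T)$.

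Next I would transfer these properties from $\hat{X}$ to the given process $\bar{X}$. By hypothesis $\P(\bar{X}_t = S_{t,T}X_t) = 1$ for all $t \in [t_0,T)$, hence $\P(\bar{X}_t = \hat{X}_t) = 1$ for all $t \in [t_0,T)$; since both $\bar{X}$ and $\hat{X}$ have continuous sample paths, the elementary fact used at the end of the proof of Lemma~\ref{lemma:transformation} yields $\P(\forall\, t \in [t_0,T] \colon \bar{X}_t = \hat{X}_t) = 1$. Consequently $[\bar{X}_t]_{\P,\mathcal{B}(\check{V})} = [\hat{X}_t]_{\P,\mathcal{B}(\check{V})}$ for all $t \in [t_0,T]$, which is item~\eqref{eq:SomeIto}. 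For item~\eqref{item:transfortmation2} I would evaluate item~\eqref{eq:SomeIto} at $t = T$ and compare it with~\eqref{eq:mildito} at $t = T$: using the continuous dense embedding $\check{V} \subseteq V$ (so that equality of equivalence classes in $\check{V}$ implies equality of the corresponding equivalence classes in $V$), the fact that item~\eqref{item:well_defined} forces the indicator $\mathbbm{1}_{\{\int_{t_0}^T \|S_{s,T}Y_s\|_V\,ds<\infty\}}$ to equal $1$ almost surely, and hence that the two Bochner integrals in $V$ agree almost surely, one obtains $[\bar{X}_T]_{\P,\mathcal{B}(V)} = [X_T]_{\P,\mathcal{B}(V)}$, i.e.\ $\P(\bar{X}_T = X_T) = 1$.

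Finally, for item~\eqref{item:transformation_i}, I would observe that $X$ is $\mathbb{F}$-adapted (being $\mathbb{F}$-predictable), that the inclusion $V \subseteq \hat{V}$ and the operators $S_{t,T} \in L(\hat{V},\check{V})$ are continuous, and that $S$ is $\mathcal{B}(\angle)/\mathcal{S}(\hat{V},\check{V})$-measurable; hence for each fixed $t \in [t_0,T)$ the map $\Omega \ni \omega \mapsto S_{t,T} X_t(\omega) \in \check{V}$ is $\mathbb{F}_t/\mathcal{B}(\check{V})$-measurable, and since $\bar{X}_t = S_{t,T}X_t$ $\P$-a.s.\ and $\mathbb{F}$ is normal (hence $\mathbb{F}_t$ complete), $\bar{X}_t$ is $\mathbb{F}_t/\mathcal{B}(\check{V})$-measurable; for $t = T$ one uses continuity of the sample paths (as a limit of $\bar{X}_{T-1/m}$). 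Thus $\bar{X}$ is $\mathbb{F}$-adapted with continuous, in particular left-continuous, sample paths, and is therefore $\mathbb{F}/\mathcal{B}(\check{V})$-predictable. I expect the only genuinely delicate point to be item~\eqref{item:transfortmation2}: one must carefully match the $\check{V}$-valued identity~\eqref{eq:SomeIto} against the $V$-valued identity~\eqref{eq:mildito}, reconciling the two different integrability indicators and the two a priori different Bochner integrals via the continuous embedding $\check{V}\hookrightarrow V$.
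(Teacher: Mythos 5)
Your proposal is correct and follows essentially the same route as the paper's own proof: item~(iii) read off from Definition~\ref{def:mildIto}, the identity of item~(iv) obtained for $t\in[t_0,T)$ from the hypothesis $\P(\bar{X}_t=S_{t,T}X_t)=1$ together with the computation from Lemma~\ref{lemma:transformation}, extended to $t=T$ by sample-path continuity, and items~(i) and~(ii) deduced from adaptedness/completeness and a comparison with~\eqref{eq:mildito} at $t=T$. Your version is merely more explicit than the paper's (introducing the auxiliary process $\hat{X}$ and spelling out the embedding $\check{V}\hookrightarrow V$ and the indicator argument), which the paper leaves implicit.
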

\begin{proof}[Proof of Lemma~\ref{lemma:transformation2}]
	The assumption that $ \bar X $ has continuous sample paths,
	the fact that
	$ X $ is $ \mathbb{F}/ \B(V) $-adapted,
	and
	the fact that
	$ \forall \, t \in [t_0, T) \colon 
	\P( \bar X_t = S_{t,T} X_t ) = 1 $
	establish item~\eqref{item:transformation_i}.
	Moreover, note that the assumption that $ X $ is a mild It\^{o} process 
	proves item~\eqref{item:well_defined}.
	In addition, observe that the assumption that
	$ 
	\forall \, t \in [ t_0, T ) \colon
	\P\big(
	\bar{X}_t
	=
	S_{ t, T } X_t
	\big) = 1
	$
	implies that
	for all
	$ t \in [ t_0, T ) $
	it holds 
	that
	\begin{equation}
	[ \bar{X}_t  ]_{\P, \mathcal{B}( \check{V}) } 
	= 
	[ S_{ t, T } X_t  ]_{\P, \mathcal{B}( \check{V}) } 
	=
	[ S_{ t_0, T } \, X_{ t_0 }  ]_{\P, \mathcal{B}( \check{V}) }
	+
	\int_{ t_0 }^t
	S_{ s, T } \, Y_s \, { \bf ds }
	+
	\int_{ t_0 }^t
	S_{ s, T } \, Z_s \, dW_s
	.
	\end{equation} 
	Combining this with the assumption that $ \bar X $ has continuous sample paths
	shows items~\eqref{eq:SomeIto}
	and~\eqref{item:transfortmation2}.
	The proof of Lemma~\ref{lemma:transformation2}
	is thus completed.
\end{proof}
\subsection{Standard It\^o formula}
\label{sec:standardIto}
Theorem~\ref{theorem:standardIto}
is an elementary extension
of 
Theorem~2.4 
in Brze\'zniak et al.\ \cite{BrzezniakVanNeervenVeraarWeis2008}
(cf.\ Lemma~\ref{lemma:gamma_estimate} 
in Subsection~\ref{section:Preliminary} above).
\begin{theorem}
	\label{theorem:standardIto}
	Assume the setting in Subsection~\ref{sec:setting_mild_calculus},    
	let
	$ \varphi 
	= ( \varphi(t,x))_{t \in [t_0,T], x \in V} \in C^{1,2}( [t_0,T] \times V, \V ) $,
	$ \xi \in \mathcal{M}( \mathbb{F}_{t_0}, \mathcal{B}(V) ) $,
	let
	$ Z \colon [t_0,T] \times \Omega \to \gamma( U, V ) $
	be an  
	$ \mathbb{F} / \B( \gamma(U, V)) $-predictable stochastic process
	which satisfies
	$ \P(
	\int_{t_0}^T \| Z_t \|_{\gamma(U, V)}^2
	\, dt
	< \infty
	) = 1 $,
	let
	$ Y \colon [t_0,T] \times \Omega \to V $
	be 
	an 
	$ \mathbb{F}/\B(V) $-predictable stochastic process
	which satisfies
	$ \P ( \int_{t_0}^T \| Y_t \|_V \, dt < \infty ) = 1 $,
	and let
	$ X \colon [t_0,T] \times \Omega \to V $
	be an
	$ \mathbb{F}/\B(V) $-predictable stochastic process
	which satisfies for all
	$ t \in [t_0,T] $ that
	\begin{equation}
	[ X_t ]_{\P, \B(V) }
	=
	[ \xi ]_{\P, \B(V) }  
	+
	\int_{t_0}^t
%	\mathbbm{1}_{
%		\{
%		\int_{t_0}^t
%		\| Y_s \|_V \,ds
%		< \infty
%		\}
%	}
	Y_s \, { \bf ds } 
	+
	\int_{t_0}^t
	Z_s \, dW_s.
	\end{equation}
	Then 
	\begin{enumerate}[(i)]
		\item it holds that
		$ \P( \int_{t_0}^T 
		\| (\tfrac{\partial}{\partial t} \varphi)(s, X_s) \|_\V \, ds
		< \infty 
		) = 1 $,
		\item 
		it holds that
		$ \P( \int_{t_0}^T 
		\| ( \tfrac{\partial}{\partial x} \varphi )(s, X_s) Y_s \|_\V \, ds
		< \infty 
		) = 1 $, 
		\item it holds that 
		$ \P ( \int_{t_0}^T \| (\tfrac{\partial}{\partial x} \varphi)(s, X_s) Z_s
		\|_{\gamma(U, \V)}^2 \, ds < \infty ) = 1 $,
		\item
		it holds for
		all $ \omega \in \Omega $,
		$ s \in [t_0,T] $
		that there exists a unique $ v \in \V $ such that
		\begin{equation}
		\sup_{ \substack{ I \subseteq \mathbb{U}, \\ \#_I < \infty } }
		\sup_{ \substack{ I \subseteq J \subseteq \mathbb{U}, \\
				\#_J < \infty } }
		\bigg\|
		v
		-
		\smallsum\limits_{u \in J } 
		(\tfrac{ \partial^2 }{ \partial x^2} \varphi )
		(s, X_s(\omega) ) ( Z_s(\omega) u, Z_s(\omega) u )
		\bigg\|_\V
		=
		0,
		\end{equation}
		\item
		it holds that
		\begin{equation}
		\begin{split}
		& 
		\P\Bigg(
		\int_{t_0}^T
		\bigg\| 
		\smallsum\limits_{u \in \mathbb{U} }
		(\tfrac{ \partial^2 }{ \partial x^2} \varphi )(s, X_s  ) ( Z_s u, Z_s  u )
		\bigg\|_\V
		\, ds
		< \infty 
		\Bigg) = 1
		,
		\end{split}
		\end{equation}
		and
		\item 
		it holds for all
		$ t_1 \in [t_0,T] $ that
		\begin{equation}
		\begin{split}
		& 
		[ \varphi( t_1, X_{t_1} ) -\varphi(t_0, X_{t_0}) ]_{\P, \B( \V )}
		=
		\int_{t_0}^{t_1}	 
		\big[
		( \tfrac{\partial}{\partial t} \varphi )(s, X_s) \, + 
		( \tfrac{\partial}{\partial x} \varphi )(s, X_s) Y_s
		\big] \, { \bf ds } 
		\\
		& 
		+ 
		\tfrac{1}{2}
		\int_{t_0}^{t_1}
		\smallsum\limits_{u \in \mathbb{U} }
		(\tfrac{ \partial^2 }{ \partial x^2} \varphi )(s, X_s)  
		(Z_s u, Z_s u )
		\, { \bf ds }
		+
		\displaystyle\int_{t_0}^{t_1} 
		(\tfrac{\partial}{\partial x } \varphi )(s, X_s) Z_s \, dW_s
.
		\end{split}
		\end{equation}
	\end{enumerate}
\end{theorem}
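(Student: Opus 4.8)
The plan is to derive all six assertions from the standard It\^o formula for type~$2$ UMD Banach spaces, \cite[Theorem~2.4]{BrzezniakVanNeervenVeraarWeis2008}, combined with the compactness of the sample paths of $ X $ and with the $ \gamma $-radonifying bilinear-form machinery provided by Lemma~\ref{lemma:gamma_estimate}. First I would pass to a regularisation: since $ V $ is a type~$2$ UMD space and since the integrability hypotheses on $ Y $ and $ Z $ hold, both $ \int_{t_0}^{\cdot} Y_s \, {\bf ds} $ and $ \int_{t_0}^{\cdot} Z_s \, dW_s $ admit modifications with continuous sample paths, so there is an $ \mathbb{F}/\B(V) $-predictable process $ \bar X \colon [t_0,T]\times\Omega\to V $ with continuous sample paths which satisfies $ \forall\, t\in[t_0,T]\colon\P(\bar X_t = X_t)=1 $; in particular $ \bar X_{t_0}=\xi $ $ \P $-a.s., and since $ X $ and $ \bar X $ are modifications of one another the relevant pathwise integrals of $ X $ and of $ \bar X $ coincide $ \P $-a.s. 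For every $ \omega $ for which $ [t_0,T]\ni s\mapsto\bar X_s(\omega) $ is continuous (an event of full probability), the set $ \bar X_{[t_0,T]}(\omega) $ is compact in $ V $, so the continuity of the partial derivatives of $ \varphi $ yields a finite bound, uniform in $ s\in[t_0,T] $, for $ \|(\tfrac{\partial}{\partial t}\varphi)(s,\bar X_s(\omega))\|_\V $, $ \|(\tfrac{\partial}{\partial x}\varphi)(s,\bar X_s(\omega))\|_{L(V,\V)} $ and $ \|(\tfrac{\partial^2}{\partial x^2}\varphi)(s,\bar X_s(\omega))\|_{L^{(2)}(V,\V)} $.

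Given this, item~(i) is immediate (a bounded function on $ [t_0,T] $ is integrable), item~(ii) follows from $ \|(\tfrac{\partial}{\partial x}\varphi)(s,\bar X_s)Y_s\|_\V\le\|(\tfrac{\partial}{\partial x}\varphi)(s,\bar X_s)\|_{L(V,\V)}\|Y_s\|_V $ together with $ \P(\int_{t_0}^T\|Y_s\|_V\,ds<\infty)=1 $, and item~(iii) follows from the ideal property in Lemma~\ref{lemma:multiplication_estimate}, which gives $ \|(\tfrac{\partial}{\partial x}\varphi)(s,\bar X_s)Z_s\|_{\gamma(U,\V)}\le\|(\tfrac{\partial}{\partial x}\varphi)(s,\bar X_s)\|_{L(V,\V)}\|Z_s\|_{\gamma(U,V)} $, squared and integrated against $ \P(\int_{t_0}^T\|Z_s\|_{\gamma(U,V)}^2\,ds<\infty)=1 $. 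For item~(iv) I would fix \emph{arbitrary} $ \omega\in\Omega $ and $ s\in[t_0,T] $; since $ Z_s(\omega)\in\gamma(U,V) $ and $ (\tfrac{\partial^2}{\partial x^2}\varphi)(s,X_s(\omega))\in L^{(2)}(V,\V) $, item~(i) of Lemma~\ref{lemma:gamma_estimate} (with $ \beta=(\tfrac{\partial^2}{\partial x^2}\varphi)(s,X_s(\omega)) $ and $ A_1=A_2=Z_s(\omega) $) produces the asserted unique $ v\in\V $. Item~(v) then follows by combining item~(iii) of Lemma~\ref{lemma:gamma_estimate}, which yields $ \|\smallsum_{u\in\mathbb{U}}(\tfrac{\partial^2}{\partial x^2}\varphi)(s,\bar X_s)(Z_su,Z_su)\|_\V\le\|(\tfrac{\partial^2}{\partial x^2}\varphi)(s,\bar X_s)\|_{L^{(2)}(V,\V)}\|Z_s\|_{\gamma(U,V)}^2 $, with the uniform bound on $ (\tfrac{\partial^2}{\partial x^2}\varphi) $ along the compact path and the square-integrability of $ Z $; item~(iv) of Lemma~\ref{lemma:gamma_estimate} and the continuity of $ (\tfrac{\partial^2}{\partial x^2}\varphi) $ at the same time ensure that $ s\mapsto\smallsum_{u\in\mathbb{U}}(\tfrac{\partial^2}{\partial x^2}\varphi)(s,\bar X_s)(Z_su,Z_su) $ is (predictable, hence) measurable.

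Finally, for item~(vi) I would apply \cite[Theorem~2.4]{BrzezniakVanNeervenVeraarWeis2008} to the continuous process $ \bar X $ and the test function $ \varphi $: the integrability statements proved in items~(i)--(iii) and~(v) are precisely the hypotheses needed, so that It\^o formula yields the claimed identity up to the phrasing of the second-order (quadratic-variation) term; item~(ii) and item~(iii) of Lemma~\ref{lemma:gamma_estimate} identify that term with $ \tfrac12\int_{t_0}^{t_1}\smallsum_{u\in\mathbb{U}}(\tfrac{\partial^2}{\partial x^2}\varphi)(s,\bar X_s)(Z_su,Z_su)\,{\bf ds} $ and, in particular, show that this basis sum does not depend on the choice of $ \mathbb{U} $. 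One then replaces $ \bar X $ by $ X $ inside the equivalence classes, which is legitimate because $ \P(\bar X_{t_1}=X_{t_1})=1 $ and $ \bar X_{t_0}=\xi=X_{t_0} $ $ \P $-a.s.; this gives item~(vi). I expect the main obstacle to be exactly this last identification, i.e.\ matching the formulation of the It\^o correction term in the reference (and of its time-dependent variant) with the $ \mathbb{U} $-indexed sum used here; should the cited statement not be directly applicable in the present generality, item~(vi) can instead be obtained by the standard localisation using the stopping times $ \tau_n=\inf\!\big(\{t\in[t_0,T]\colon\int_{t_0}^t(\|Y_s\|_V+\|Z_s\|_{\gamma(U,V)}^2)\,ds\ge n\}\cup\{T\}\big) $, which reduces everything to uniformly bounded integrands, followed by passing $ n\to\infty $.
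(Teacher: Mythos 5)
Your proposal is correct and follows exactly the route the paper intends: the paper gives no detailed proof of Theorem~\ref{theorem:standardIto}, stating only that it is an elementary extension of Theorem~2.4 in Brze\'zniak et al.\ \cite{BrzezniakVanNeervenVeraarWeis2008} combined with Lemma~\ref{lemma:gamma_estimate}, which is precisely the reduction you carry out (continuous modification, pathwise compactness bounds, the ideal property for items (i)--(iii), Lemma~\ref{lemma:gamma_estimate} for items (iv)--(v), and identification of the trace term for item (vi)). Your filled-in details, including the pathwise use of item (i) of Lemma~\ref{lemma:gamma_estimate} for the ``all $\omega$'' claim in item (iv) and the a.s.\ replacement of $\bar X$ by $X$, are sound.
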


\subsection{Mild It{\^o} formula for stopping times}
\label{sec:mildito}

\begin{theorem}[Mild It{\^o} formula]
	\label{thm:ito}
	Assume the setting in Subsection~\ref{sec:setting_mild_calculus},
	let
	$
	X \colon [ t_0, T ] \times \Omega
	\rightarrow V
	$
	be a mild It{\^o} process
	with evolution family
	$
	S \colon \angle
	\rightarrow L( \hat{V}, \check{V} )
	$,
	mild drift
	$
	Y \colon [ t_0, T ] \times
	\Omega \rightarrow \hat{V}
	$, 
	and mild
	diffusion
	$
	Z \colon [ t_0, T ] \times
	\Omega \rightarrow
	\gamma(U,\hat{V})
	$,
	let
	$
	\bar{X} \colon [ t_0, T ] \times \Omega \rightarrow \check{V}
	$ 
	be a
	stochastic process with continuous sample paths
	which satisfies
	$
	\forall \, t \in [ t_0, T ) \colon
	\P (
	\bar{X}_t
	=
	S_{ t, T } 
	X_t
	) = 1
	$
	(see Lemma~\ref{lemma:transformation}),
	let
	$ r \in [ t_0, T ) $,
	$
	\varphi
	=
	(
	\varphi(t,x)
	)_{ t \in [ r, T ],\, x \in \check{V} }
	\in C^{1,2}(
	[ r, T ] \times \check{V}, \V
	)
	$,
	and let
	$ \tau \colon \Omega \rightarrow [ r, T ] $
	be an 
	$
	\mathbb{F}
	$-stopping time.
	Then 
	\begin{enumerate}[(i)]
		\item \label{item:first}
		it holds that
		$ \P (
		\int_r^{ T }
		\|
		(
		\tfrac{ \partial }{ \partial x }
		\varphi
		)
		( s, S_{ s, T } X_s )
		S_{ s, T } Y_s
		\|_\V
		\, ds
		< \infty)= 1 $,
		\item it holds that
		$ \P( \int_r^T
		\|
		(
		\tfrac{ \partial }{ \partial x }
		\varphi
		)
		( s, S_{ s, T } X_s )
		S_{ s, T } Z_s
		\|_{ \gamma(U, \V ) }^2
		\,
		ds
		< \infty
		) = 1 $, 
		\item
		it holds that
		$ \P ( 
		\int_r^{ T }
		\|
		(
		\tfrac{ \partial }{ \partial t }
		\varphi
		)
		( s, S_{ s, T } X_s )
		\|_\V
		\, ds < \infty ) = 1 $,
		\item \label{item:fourth}
		it holds that
		$ \P( \int_r^T
		\|
		(
		\tfrac{ \partial^2 }{ \partial x^2 }
		\varphi
		)
		( s, S_{ s, T } X_s )
		\|_{ L^{(2)}( \check{V}, \V ) } \,
		\|
		S_{ s, T } Z_s
		\|_{ \gamma(U, \check{V} ) }^2 \,
		ds
		< \infty
		) = 1 $,
		\item \label{item:well_defined_sum}
		it holds for
		all $ \omega \in \Omega $,
		$ s \in [r,T] $
		that there exists a unique $ v \in \V $ such that
		\begin{equation}
		\sup_{ \substack{ I \subseteq \mathbb{U}, \\ \#_I < \infty } }
		\sup_{ \substack{ I \subseteq J \subseteq \mathbb{U}, \\
				\#_J < \infty } }
		\bigg\|
		v
		-
		\smallsum\limits_{u \in J} 
		(\tfrac{ \partial^2 }{ \partial x^2} 
		\varphi )
		(s, S_{s,T} X_s(\omega) ) 
		( S_{s,T} Z_s(\omega) u,  
		S_{s,T} Z_s(\omega) u )
		\bigg\|_\V
		=
		0,
		\end{equation}
		\item \label{item:finite}
		it holds that
		\begin{equation}
		\begin{split}
		& 
		\P\bigg(
		\int_{t_0}^T
		\bigg\| 
		\smallsum\limits_{u \in \mathbb{U} } 
		(\tfrac{ \partial^2 }{ \partial x^2} 
		\varphi )
		(s, S_{s,T} X_s  )   
		( S_{s,T} Z_s  u, 
		S_{s,T} Z_s  u )
		\bigg\|_\V
		\, ds
		< \infty 
		\bigg)
		=
		1
		,
		\end{split}
		\end{equation}
		and
		\item
		\label{item:Mild_Ito}
		it holds that
		\begin{equation}
		\label{eq:itoformel_start}
		\begin{split}
		[ \varphi( \tau, \bar{X}_\tau ) ]_{\P, \mathcal{B}( \V ) }
		&
		=
		[ \varphi( r,
		S_{ r, T }
		X_{ r }
		) ]_{\P, \mathcal{B}( \V ) }
		+
		\int_{ r }^\tau
		(
		\tfrac{ \partial }{ \partial x } \varphi
		)
		( s, S_{ s, T } X_s )
		\, S_{ s, T } \, Z_s
		\, dW_s
		\\
		&
		\quad
		+
		\int_{ r }^\tau
		\big[
		(
		\tfrac{ \partial }{ \partial t }
		\varphi
		)
		( s, S_{ s, T } X_s )
		+
		(
		\tfrac{ \partial }{ \partial x } \varphi
		)
		( s, S_{ s, T } X_s )
		\,
		S_{ s, T} \,Y_s
		\big]
		\,{ \bf ds} 
		\\
		&
		\quad
		+
		\tfrac{1}{2}
		\int_{ r }^\tau
		\smallsum\limits_{ u \in \mathbb{U} }
		(
		\tfrac{ \partial^2 }{ \partial x^2 } \varphi
		)
		( s, S_{ s, T } X_s )
	 (
		S_{ s, T }
		Z_s u ,
		S_{ s, T }
		Z_s u
	 ) \, { \bf ds}
		.
		\end{split}
		\end{equation}
	\end{enumerate}
\end{theorem}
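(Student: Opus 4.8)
The plan is to deduce the mild It\^o formula for $\bar X$ from the standard It\^o formula (Theorem~\ref{theorem:standardIto}), applied to $\bar X$ regarded as a genuine It\^o process in $\check{V}$ on the interval $[r,T]$, and then to promote the resulting identity from deterministic times to the $\mathbb F$-stopping time $\tau$. First I would invoke Lemma~\ref{lemma:transformation2}, which shows that $\bar X$ is $\mathbb F/\mathcal{B}(\check{V})$-predictable, that $\P(\int_r^T\|S_{s,T}Y_s\|_{\check{V}}+\|S_{s,T}Z_s\|_{\gamma(U,\check{V})}^2\,ds<\infty)=1$, and that $[\bar X_t]_{\P,\mathcal{B}(\check{V})}=[S_{t_0,T}X_{t_0}]_{\P,\mathcal{B}(\check{V})}+\int_{t_0}^tS_{s,T}Y_s\,{\bf ds}+\int_{t_0}^tS_{s,T}Z_s\,dW_s$ for all $t\in[t_0,T]$. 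Subtracting this identity at time $r$ from the one at time $t$ and using $\P(\bar X_r=S_{r,T}X_r)=1$ (note $r\in[t_0,T)$) yields, for all $t\in[r,T]$, that $[\bar X_t]_{\P,\mathcal{B}(\check{V})}=[\bar X_r]_{\P,\mathcal{B}(\check{V})}+\int_r^tS_{s,T}Y_s\,{\bf ds}+\int_r^tS_{s,T}Z_s\,dW_s$; in other words $\bar X|_{[r,T]}$ is a standard It\^o process in $\check{V}$ with $\mathbb F_r/\mathcal{B}(\check{V})$-measurable initial value $\bar X_r$, drift $[r,T]\times\Omega\ni(s,\omega)\mapsto S_{s,T}Y_s(\omega)\in\check{V}$, and diffusion $[r,T]\times\Omega\ni(s,\omega)\mapsto S_{s,T}Z_s(\omega)\in\gamma(U,\check{V})$, the measurability of these composed integrands being part of what Lemma~\ref{lemma:transformation2} delivers.

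Next I would apply Theorem~\ref{theorem:standardIto} with ``$t_0$'' $=r$, ``$V$'' $=\check{V}$, ``$\xi$'' $=\bar X_r$, ``$Y$'' $=S_{\cdot,T}Y_\cdot$, ``$Z$'' $=S_{\cdot,T}Z_\cdot$, ``$X$'' $=\bar X$, and ``$\varphi$'' $=\varphi$. Items~(i)--(iv) and~(vi) of Theorem~\ref{theorem:standardIto} then give the integrability assertions of Theorem~\ref{thm:ito} with $\bar X_s$ in place of $S_{s,T}X_s$; since $\bar X_s=S_{s,T}X_s$ $\P$-a.s.\ for each $s\in[r,T)$, a routine Fubini argument shows that for $\P$-a.e.\ $\omega$ the corresponding integrands agree for $\lambda$-a.e.\ $s\in[r,T]$, which establishes items~\eqref{item:first}--\eqref{item:fourth} and~\eqref{item:finite}. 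Item~\eqref{item:well_defined_sum} follows directly from Lemma~\ref{lemma:gamma_estimate}(i), applied with $\beta=(\tfrac{\partial^2}{\partial x^2}\varphi)(s,S_{s,T}X_s(\omega))\in L^{(2)}(\check{V},\V)$ and $A_1=A_2=S_{s,T}Z_s(\omega)$, the latter lying in $\gamma(U,\check{V})$ by Lemma~\ref{lemma:multiplication_estimate}.

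For the final item~\eqref{item:Mild_Ito} I would use item~(vi) of Theorem~\ref{theorem:standardIto} to obtain the It\^o identity for $\bar X$ at every fixed deterministic time $t_1\in[r,T]$ and let $\hat M=(\hat M_t)_{t\in[r,T]}$ be the difference of its two sides at the running time $t$, so that $\P(\hat M_t=0)=1$ for every $t\in[r,T]$. Solving the identity for the stochastic-integral term shows that this term, as a process in $t$, coincides $\P$-a.s.\ at each fixed time with $\varphi(t,\bar X_t)-\varphi(r,\bar X_r)$ minus the two ${\bf ds}$-integrals; the map $t\mapsto\varphi(t,\bar X_t)$ has continuous sample paths since $\bar X$ does and $\varphi$ is continuous, and the ${\bf ds}$-integrals are continuous in the upper limit by the $\P$-a.s.\ $\lambda$-integrability of their integrands and dominated convergence, so the stochastic-integral process, and hence $\hat M$, has a modification with continuous sample paths. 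Since this continuous modification vanishes $\P$-a.s.\ at each fixed time, it is $\P$-a.s.\ identically $0$ on $[r,T]$, and therefore $\P(\hat M_\tau=0)=1$ for the $[r,T]$-valued $\mathbb F$-stopping time $\tau$. Rewriting $\hat M_\tau=0$ by means of the pathwise stopping identity for the Lebesgue integrals and the optional-stopping identity for the stochastic integral (stopping commutes with stochastic integration), and once more replacing $\bar X_s$ by $S_{s,T}X_s$ inside the integrals and $\bar X_r$ by $S_{r,T}X_r$, yields~\eqref{eq:itoformel_start}.

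Conceptually the argument is routine once Theorem~\ref{theorem:standardIto} is available; the points requiring care are the bookkeeping of the measurability of the composed integrands $S_{\cdot,T}Y_\cdot$ and $S_{\cdot,T}Z_\cdot$ (the evolution family $S$ being only $\mathcal{S}(\hat{V},\check{V})$-measurable), which is exactly what Lemma~\ref{lemma:transformation2} supplies, and the excluded endpoint $s=T$, where $S_{T,T}$ is undefined and $\bar X_T=X_T$ holds only $\P$-a.s.; since the integrals appearing in the formula ignore $\lambda$-null subsets of $[r,T]$ and the left-hand side of~\eqref{eq:itoformel_start} is stated in terms of $\bar X_\tau$ rather than $S_{\tau,T}X_\tau$, this endpoint causes no actual difficulty.
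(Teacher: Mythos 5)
Your argument is correct and is essentially the paper's own proof: both regularize via Lemma~\ref{lemma:transformation2}, apply the standard It\^o formula of Theorem~\ref{theorem:standardIto} to $\bar{X}$ viewed as an It\^o process in $\check{V}$, and pass between $\bar{X}_s$ and $S_{s,T}X_s$ inside the integrals by null-set arguments, with your explicit continuity/indistinguishability and optional-stopping step merely filling in the deterministic-time-to-$\tau$ passage that the paper leaves implicit by citing Theorem~\ref{theorem:standardIto} directly for the stopped identity. One small slip: item~\eqref{item:fourth} of Theorem~\ref{thm:ito} is not literally among items~(i)--(iv) of Theorem~\ref{theorem:standardIto}; as in the paper, it follows instead from the continuity of $\tfrac{\partial^2}{\partial x^2}\varphi$ and of the sample paths of $\bar{X}$ together with $\P\big(\int_r^T\|S_{s,T}Z_s\|^2_{\gamma(U,\check{V})}\,ds<\infty\big)=1$, ingredients you already have, so nothing essential is missing.
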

\begin{proof}[Proof
	of Theorem~\ref{thm:ito}]
	Throughout this proof let 
	$
	\varphi_{1,0} \colon [ r, T ] \times \check{V} \to \V 
	$, 
	$
	\varphi_{0,1} \colon [ r, T ] \times \check{V} \to L( \check{V}, \V ) 
	$, 
	and
	$
	\varphi_{0,2} \colon [ r, T ] \times \check{V} \to L^{(2)}( \check{V}, \V ) 
	$
	be the functions which satisfy
	for all 
	$ t \in [ r, T ] $, 
	$ x, v_1, v_2 \in \check{V} $ 
	that 
	$
	\varphi_{1,0}( t, x )
	=
	\big(\tfrac{ \partial }{ \partial t } \varphi\big)( t, x)
	,
	$
	$
	\varphi_{0,1}( t, x ) \, v_1
	=
	\big(\tfrac{ \partial }{ \partial x } \varphi\big)( t, x ) \, v_1
	,
	$
	and
	$
	\varphi_{0,2}( t, x )( v_1, v_2 )
	=
	\big(
	\tfrac{ \partial^2 }{ \partial x^2 } \varphi
	\big)( t, x )( v_1, v_2 )
	$.
	Note that Lemma~\ref{lemma:transformation2}
	ensures that
	$ \bar{X} $ is an $ \mathbb{F}/\B(\check{V}) $-adapted
	stochastic process with continuous sample paths
	which satisfies for all
	$ t \in [ t_0, T ] $ that
	\begin{equation}
	[ \bar{X}_t ]_{\P, \mathcal{B}( \check{V}) }
	=
	[
	S_{ t_0, T } \,
	X_{ t_0 }
	]_{\P, \mathcal{B}( \check{V}) }
	+
	\int_{ t_0 }^t
	S_{ s, T } \,
	Y_s
	\, { \bf ds }
	+
	\int_{ t_0 }^t
	S_{ s, T } \,
	Z_s
	\, dW_s
	.
	\end{equation}
	Moreover, the assumption that
	$ \varphi \in C^{1,2}([r,T]\times 
	\check{V}, \V ) $,
	the assumption that 
	%continuity
	%of the partial derivatives
	%of
	%$ 
	%\varphi \colon 
	%[r,T] 
	%\times
	%\check{V}
	%\rightarrow \V
	%$,
	%the continuity
	%of the sample paths
	%of 
	$
	\bar{X}
	\colon [t_0, T]
	\times \Omega
	\rightarrow \check{V}
	$
	has continuous sample paths,
	and 
	the fact that
	$ 
	\forall\,  t \in [t_0, T] \colon
	\P ( \int_{ t_0 }^{ t }
	\|
	S_{ s, t } Y_s
	\|_{ \check{V} }
	+
	\|
	S_{ s, t } Z_s
	\|_{ \gamma( U, \check{V} ) }^2
	\,
	ds < \infty ) = 1 $ 
	imply
	that
	\begin{equation}
	\label{eq:well1c}
	\mathbb{P}\!\left(
	\int_{ r }^T
	\|
	\varphi_{0,1} ( s, \bar{X}_s )
	S_{ s, T } Y_s
	\|_\V
	+ 
	\|
	\varphi_{0,1} ( s, \bar{X}_s )
	S_{ s, T } Z_s
	\|_{ \gamma(U, \V ) }^2 
	\,
	ds
	< \infty 
	\right) = 1
	\end{equation}
	and
	\begin{equation}
	\label{eq:well2c}
	\mathbb{P}\!\left(
	\int_{ \tau }^T
	\|
	\varphi_{1,0}( s, \bar{X}_s )
	\|_\V
	+
	\|
	\varphi_{0,2}( s, \bar{X}_s )
	\|_{ L^{(2)}( \check{V}, \V ) }
	\|
	S_{ s, T } Z_s
	\|_{ \gamma(U, \check{V} ) }^2 
	\,
	ds
	< \infty 
	\right) = 1 
	.
	\end{equation}
	Combining this with, e.g.,
	Lemma~3.1
	in~\cite{JentzenPusnik2016}
	proves
	items~\eqref{item:first}--\eqref{item:fourth}.
	% 
	%
	%It thus remains to prove \eqref{eq:itoformel_start}.
	%
	Then note that Lemma~\ref{lemma:transformation2}
	and 
	Theorem~\ref{theorem:standardIto}
	% and the standard It{\^o} formula in Theorem~2.4 in
	%Brze\'{z}niak, Van Neerven, Veraar \citationand\ Weis~\cite{bvvw08}
	show 
	\begin{enumerate}[(a)]
		%	\item it holds that 
		%	$ \P ( \int_0^T \| \tfrac{\partial}{\partial x} f(t, X_t) \Phi_t
		%	\|_{\gamma(U, \V)}^2 \, dt < \infty ) = 1 $,
		%	\item it holds that
		%	$ \P( \int_0^T 
		%	\| \tfrac{\partial}{\partial t} f(t, X_t) \|_\V \, dt
		%	< \infty 
		%	) = 1 $,
		%	\item 
		%	it holds that
		%	$ \P( \int_0^T 
		%	\| \tfrac{\partial}{\partial x} f(t, X_t) \psi_t \|_\V \, dt
		%	< \infty 
		%	) = 1 $, 
		\item \label{item:aa}
	 that for all 
		$ \omega \in \Omega $,
		$ s \in [r,T] $
	 there exists a unique $ v \in \V $ such that
		\begin{equation}
		\sup_{ \substack{ I \subseteq \mathbb{U}, \\ \#_I < \infty } }
		\sup_{ \substack{ I \subseteq J \subseteq \mathbb{U}, \\
				\#_J < \infty } }
		\bigg\|
		v
		-
		\sum_{h \in J } 
		\varphi_{0,2}(s, \bar X_s(\omega) ) 
		( S_{s,T} Z_s(\omega) u, S_{s,T} Z_s(\omega) u )
		\bigg\|_\V
		=
		0,
		\end{equation}
		\item \label{item:bb}
	  that
		\begin{equation}
		\begin{split}
		&
		\P\bigg(
		\int_0^T
		\bigg\| 
		\sum_{u \in \mathbb{U} }
		\varphi_{0,2}(s, \bar X_s  )
		( S_{s,T}Z_s  u, S_{s,T}Z_s  u)
		\bigg\|_\V
		\, ds
		< \infty\bigg) = 1
		,
		\end{split}
		\end{equation}
		and
		\item \label{item:cc}
	    that
		\begin{equation}
		\label{eq:itoformel2inProof}
		\begin{split}
		&
		[ \varphi( \tau, \bar{X}_{ \tau } )  ]_{\P, \mathcal{B}( \V ) }
		= 
		[ \varphi( r, \bar{X}_{ r } ) ]_{\P, \mathcal{B}( \V ) }
		+
		\int_{r}^{ \tau }
		\varphi_{1,0}( s, \bar{X}_{ s } )
		+
		\varphi_{0,1}( s, \bar{X}_{ s } ) 
		S_{ s, T }  Y_s \, { \bf ds}
		\\&+
		\int_{r}^{ \tau }
		\varphi_{0,1}( s, \bar{X}_{ s } ) 
		S_{ s, T }  Z_s \, dW_s
		+
		\tfrac{1}{2}
		\int_{r}^{ \tau }
		\sum_{ u \in \mathbb{U} }
		\varphi_{0,2}( s,
		\bar{X}_{ s }
		)
		\left(
		S_{ s, T } Z_s  u ,
		S_{ s, T } Z_s  u
		\right) { \bf ds}
		.
		\end{split}
		\end{equation}
	\end{enumerate}
	Combining this
	with, e.g.,
	Lemma~3.1
	in~\cite{JentzenPusnik2016},
	the fact that 
	$ 
	\forall \, t \in [ t_0, T ) 
	\colon
	\P\big(
	\bar{X}_t = S_{ t, T } \, X_t
	\big)
	= 1
	$,
	and the fact that
	$ \forall \, t \in [t_0, T] 
	\colon
	\P  ( 
	\sum_{ u \in \mathbb{U} }
	\varphi_{0,2}( s,
	\bar{X}_{ s }
	)
	(
	S_{ s, T } Z_s u ,
	S_{ s, T } Z_s u
	)
	=
	\sum_{ u \in \mathbb{U} }
	\varphi_{0,2}( s,
	S_{ s, T }
	X_s
	)
	(
	S_{ s, T }
	Z_s u ,
	S_{ s, T }
	Z_s u
	)
	) = 1 
	$
	shows that
	item~\eqref{item:well_defined_sum} holds, 
	that item~\eqref{item:finite} holds,
	and that for all
	$ t \in [r,T] $
	it holds that
	\begin{equation}
	\label{eq:itoformel_fixed}
	\begin{split}
	[ \varphi( t , \bar{X}_t ) ]_{\P, \mathcal{B}( \V ) }
	&
	=
	[
	\varphi( r,
	S_{ r, T }
	X_{ r }
	)
	]_{\P, \mathcal{B}( \V ) }
	+
	\int_{ r }^t
	\varphi_{0,1}( s,
	S_{ s, T }
	X_s
	) \,
	S_{ s, T } \,
	Z_s \, dW_s
	\\
	&
	\quad
	+
	\int_{ r }^t
	\varphi_{1,0}( s,
	S_{ s, T }
	X_s
	)
	+
	\varphi_{0,1}( s,
	S_{ s, T }
	X_s
	) \,
	S_{ s, T} \,
	Y_s \, { \bf ds}
	\\
	&
	\quad
	+
	\tfrac{1}{2}
	\int_{ r }^t
	\sum_{ u \in \mathbb{U} }
	\varphi_{0,2}( s,
	S_{ s, T }
	X_s
	)
	\left(
	S_{ s, T }
	Z_s u ,
	S_{ s, T }
	Z_s u
	\right) { \bf ds}
	.
	\end{split}
	\end{equation}
	This implies item~\eqref{item:Mild_Ito}.
	The proof of Theorem~\ref{thm:ito}
	is thus completed.
\end{proof}

\begin{definition}[Extended mild Kolmogorov operators]
	Assume the setting in Subsection~\ref{sec:setting_mild_calculus}, 
	let 
	$
	S \colon
	\angle
	\rightarrow
	L( \hat{V}, \check{V} )
	$
	be a
	$
	\mathcal{B}( \angle )
	$/$
	\mathcal{S}( \hat{V}, \check{V} )
	$-measurable
	function which satisfies for all $ t_1, t_2, t_3 \in [ t_0, T ] $
	with $ t_1 < t_2 < t_3 $
	that
	$
	S_{ t_2, t_3 }
	S_{ t_1, t_2 }
	=
	S_{ t_1, t_3 }
	$, 
	and let 
	$ (t_1, t_2) \in \angle $.
	Then we denote by 
	$
	\mathcal{L}^S_{ t_1, t_2 }
	\colon
	C^2( \check{V}, \V )
	\rightarrow
	C( V \times \hat{V} \times \gamma( U, \hat{V} ), \V )
	$ 
	the function which satisfies for all 
	$ \varphi \in C^2( \check{V}, \V ) $, 
	$ x \in V $, 
	$ y \in \hat{V} $, 
	$ z \in \gamma( U, \hat{V} ) $ 
	that 
	\begin{equation}
	\big(
	\mathcal{L}^S_{ t_1, t_2 } \varphi
	\big)( x, y, z )
	=
	\varphi'( S_{ t_1, t_2 } \, x )
	\, S_{ t_1, t_2 } \, y
	+
	\tfrac{ 1 }{ 2 }
	\sum_{ u \in \mathbb{U} }
	\varphi''( S_{ t_1, t_2 } \, x )
	( S_{ t_1, t_2 }   z   u, S_{ t_1, t_2 }  z   u )
	.
	\end{equation}
\end{definition}
The next corollary of 
Theorem~\ref{thm:ito} specialises Theorem~\ref{thm:ito}
to the case 
where
$ r = t_0 $
and where 
the test function
$
( \varphi(t, x) )_{
	t \in [ t_0, T ], \,
	x \in \check{V}
}
\in
C^{1,2}(
[ t_0, T ] \times \check{V}, \V
)
$
depends on $ x \in \check{V} $ only.
\begin{corollary}
	\label{cor:itoauto}
	Assume the setting in Subsection~\ref{sec:setting_mild_calculus},
	let
	$
	X \colon [ t_0, T ] \times \Omega
	\rightarrow V
	$
	be a mild It{\^o} process
	with evolution family
	$
	S \colon \angle
	\rightarrow L( \hat{V}, \check{V} )
	$,
	mild drift
	$
	Y \colon [ t_0, T ] \times
	\Omega \rightarrow \hat{V}
	$, 
	and mild
	diffusion
	$
	Z \colon [ t_0, T ] \times
	\Omega \rightarrow
	\gamma(U,\hat{V})
	$, 
	let
	$
	\bar{X} \colon [ t_0, T ] \times \Omega \rightarrow \check{V}
	$ 
	be a stochastic process
	with continuous sample paths
	which satisfies
	$
	\forall \, t \in [ t_0, T ) \colon
	\P\big(
	\bar{X}_t
	=
	S_{ t, T } X_t
	\big) = 1
	$
	(see Lemma~\ref{lemma:transformation}),
	let
	$
	\varphi \in C^2(
	\check{V}, \V
	)
	$, 
	and let 
	$ \tau \colon \Omega \rightarrow [ t_0, T ] $ 
	be an
	$ \mathbb{F} $-stopping time.
	Then  
	\begin{enumerate}[(i)] 
%		\item\label{eq:important_condition}
%		it holds
%		that 
%		$ \P(
%		\int_{ t_0 }^{ T }
%		\|
%		\varphi'( S_{ s, T } X_s )
%		S_{ s, T } Y_s
%		\|_\V
%		\, ds < \infty) = 1 $,
\item
it holds that
$ \P( 
	\int_{ t_0 }^T
	\|
		(
		\mathcal{L}^S_{ s, T } \varphi
		)( X_s, Y_s, Z_s )
		\|_\V
		\, ds
		< \infty 
) = 1 $, 
		\item \label{eq:W_defined}
		it holds
		that 
		$ \P(
		\int_{t_0}^T
		\|
		\varphi'( S_{ s, T } X_s )
		S_{ s, T } Z_s
		\|_{ \gamma(U, \V ) }^2 \,
		ds < \infty) = 1 $,
		and
%		\item \label{eq:important_condition2}
%		it holds
%		that 
%		$ \P( \int_{t_0}^T
%		\|
%		\varphi''( S_{ s, T } X_s )
%		\|_{ L^{(2)}( \check{V}, \V ) } 
%		\|
%		S_{ s, T } Z_s
%		\|_{ \gamma(U, \check{V} ) }^2 \, 
%		ds
%		< \infty ) = 1
%		$,
		\item\label{eq:mild_ito_type_independent}
		it holds
		that  
		\begin{equation}
		\begin{split}
		[ \varphi( \bar{X}_\tau ) ]_{\P, \mathcal{B}( \V ) }
	 & = 
		[
		\varphi(
		S_{ t_0, T }
		X_{ t_0 }
		)
		]_{\P, \mathcal{B}( \V ) }
		+
		\int_{ t_0 }^\tau
		(
		\mathcal{L}^S_{ s, T } \varphi
		)( X_s, Y_s, Z_s )
		\, { \bf ds }
		\\
		&
		\quad
		+
		\int_{ t_0 }^\tau
		\varphi'(
		S_{ s, T }
		X_s
		) \,
		S_{ s, T } \,
		Z_s \, dW_s
		.
		\end{split}
		\end{equation}
	\end{enumerate}
\end{corollary}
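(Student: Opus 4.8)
The plan is to obtain Corollary~\ref{cor:itoauto} as a direct specialisation of Theorem~\ref{thm:ito}. First I would introduce the time-independent test function $\psi = ( \psi(t,x) )_{ t \in [t_0,T],\, x \in \check{V} }$ which satisfies $\psi(t,x) = \varphi(x)$ for all $t \in [t_0,T]$, $x \in \check{V}$. Since $\varphi \in C^2( \check{V}, \V )$, it holds that $\psi \in C^{1,2}( [t_0,T] \times \check{V}, \V )$ and that $( \tfrac{\partial}{\partial t} \psi )(t,x) = 0$, $( \tfrac{\partial}{\partial x} \psi )(t,x) = \varphi'(x)$, and $( \tfrac{\partial^2}{\partial x^2} \psi )(t,x) = \varphi''(x)$ for all $t \in [t_0,T]$, $x \in \check{V}$. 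I would then apply Theorem~\ref{thm:ito} with $r = t_0$ and with $\varphi$ replaced by $\psi$, keeping $X$, $Y$, $Z$, $S$, $\bar{X}$, and $\tau$ as given in the corollary.

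With this substitution, item~\eqref{item:first} of Theorem~\ref{thm:ito} reads $\P( \int_{t_0}^T \| \varphi'( S_{s,T} X_s )\, S_{s,T} Y_s \|_\V \, ds < \infty ) = 1$ and item~\eqref{item:finite} of Theorem~\ref{thm:ito} reads $\P( \int_{t_0}^T \| \smallsum_{u \in \mathbb{U}} \varphi''( S_{s,T} X_s )( S_{s,T} Z_s u, S_{s,T} Z_s u ) \|_\V \, ds < \infty ) = 1$. Since, by the definition of the extended mild Kolmogorov operator, $( \mathcal{L}^S_{s,T} \varphi )( X_s, Y_s, Z_s ) = \varphi'( S_{s,T} X_s )\, S_{s,T} Y_s + \tfrac{1}{2} \smallsum_{u \in \mathbb{U}} \varphi''( S_{s,T} X_s )( S_{s,T} Z_s u, S_{s,T} Z_s u )$, the triangle inequality then yields item~(i) of the corollary. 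Item~\eqref{eq:W_defined} of the corollary is precisely item~(ii) of Theorem~\ref{thm:ito} for $\psi$ (note that $\tfrac{\partial}{\partial x}\psi = \varphi'$).

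For item~\eqref{eq:mild_ito_type_independent} I would start from the mild It\^o formula~\eqref{eq:itoformel_start} in item~\eqref{item:Mild_Ito} of Theorem~\ref{thm:ito} applied to $\psi$ with $r = t_0$. On the left-hand side $\psi(\tau,\bar{X}_\tau) = \varphi(\bar{X}_\tau)$, and the leading term on the right-hand side equals $\varphi( S_{t_0,T} X_{t_0} )$. Since $( \tfrac{\partial}{\partial t} \psi ) \equiv 0$, the $\tfrac{\partial}{\partial t}$-contribution vanishes, so the two $\mathbf{ds}$-integrals of Theorem~\ref{thm:ito} reduce to $\int_{t_0}^\tau \varphi'( S_{s,T} X_s )\, S_{s,T} Y_s \, \mathbf{ds}$ and $\tfrac{1}{2} \int_{t_0}^\tau \smallsum_{u \in \mathbb{U}} \varphi''( S_{s,T} X_s )( S_{s,T} Z_s u, S_{s,T} Z_s u ) \, \mathbf{ds}$; merging these into a single integral of $( \mathcal{L}^S_{s,T} \varphi )( X_s, Y_s, Z_s )$ — which is legitimate on the full-probability event on which both integrands are path-wise absolutely integrable — and leaving the stochastic integral $\int_{t_0}^\tau \varphi'( S_{s,T} X_s )\, S_{s,T} Z_s \, dW_s$ unchanged produces exactly the asserted identity.

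The argument is essentially bookkeeping, so I do not anticipate a genuine obstacle; the two points that merit a little care are (a) confirming that a time-independent $C^2$ function indeed lies in $C^{1,2}$ in the joint time--space variables with the stated partial derivatives, and (b) checking that the full-probability events underlying the individual integrability statements of Theorem~\ref{thm:ito} intersect to a full-probability event on which the indicator functions hidden in the $\int_{t_0}^\tau ( \,\cdot\, ) \, \mathbf{ds}$ notation of Subsection~\ref{sec:setting_mild_calculus} equal $1$, so that the two drift integrals may be added term by term. Both are routine given the conventions fixed there.
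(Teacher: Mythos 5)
Your proposal is correct and is exactly the route the paper takes: the paper offers no separate argument for Corollary~\ref{cor:itoauto} beyond observing that it is the specialisation of Theorem~\ref{thm:ito} to $r = t_0$ and a test function depending on $x \in \check{V}$ only, which is precisely your construction $\psi(t,x) = \varphi(x)$ with vanishing time derivative, together with the identification of the two $\mathbf{ds}$-integrals with $\int_{t_0}^{\tau} ( \mathcal{L}^S_{s,T} \varphi )( X_s, Y_s, Z_s ) \, \mathbf{ds}$ via the definition of the extended mild Kolmogorov operator. Your two points of care (time-independent $C^2$ functions lie in $C^{1,2}$, and intersecting the full-probability integrability events before merging the drift integrals) are the right ones and are routine, as you say.
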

The next result, Corollary~\ref{cor:itoauto2},
specializes Corollary~\ref{cor:itoauto}
to the case where
$ \forall \, \omega \in \Omega \colon
\tau(\omega ) = T $.
Corollary~\ref{cor:itoauto2}
is an immediate consequence of
Corollary~\ref{cor:itoauto},
Lemma~\ref{lemma:transformation},
and Lemma~\ref{lemma:transformation2}.
\begin{corollary}
	\label{cor:itoauto2}
	Assume the setting in Subsection~\ref{sec:setting_mild_calculus},
	let
	$
	X \colon [ t_0, T ] \times \Omega
	\rightarrow V
	$
	be a mild It{\^o} process
	with evolution family
	$
	S \colon \angle
	\rightarrow L( \hat{V}, \check{V} )
	$,
	mild drift
	$
	Y \colon [ t_0, T ] \times
	\Omega \rightarrow \hat{V}
	$, 
	and mild
	diffusion
	$
	Z \colon [ t_0, T ] \times
	\Omega \rightarrow
	\gamma(U,\hat{V})
	$, 
	and let
	$
	\varphi \in C^2(
	\check{V}, \V
	)
	$. 
	Then 
	\begin{enumerate}[(i)]
	\item
	it holds that
	$ \P( 
		\int_{ t_0 }^T
		\|
			(
			\mathcal{L}^S_{ s, T } \varphi
			)( X_s, Y_s, Z_s )
			\|_\V
			\, ds
			< \infty 
	) = 1 $, 
		\item
		it holds
		that 
		$ \P( \int_{t_0}^T \|
		\varphi'( S_{ s, T } X_s )
		S_{ s, T } Z_s
		\|_{ \gamma(U, \V ) }^2 \, ds
		< \infty ) = 1 $,
		\item 
		\label{item:terminal_well}
		it holds that
		$ \P (  X_T \in \check{V} ) = 1 $,
		and
		\item
		it holds
		that 
		\begin{equation}  
		\begin{split}
			\label{eq:mild_ito_type_independent2}
		\big[ \varphi( X_T \mathbbm{1}_{ \{ X_T \in \check{V} \} } ) \big]_{\P, \mathcal{B}( \V ) }
		&
		=
		[
		\varphi(
		S_{ t_0, T }
		X_{ t_0 }
		)
		]_{\P, \mathcal{B}( \V ) }
		+
		\int_{ t_0 }^T
		(
		\mathcal{L}^S_{ s, T } \varphi
		)( X_s, Y_s, Z_s )
		\, { \bf ds }
		\\
		&
		\quad
		+
		\int_{ t_0 }^T
		\varphi'(
		S_{ s, T }
		X_s
		) 
		S_{ s, T } \,
		Z_s \, dW_s
		.
		\end{split}
		\end{equation}
	\end{enumerate}
\end{corollary}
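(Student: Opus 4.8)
The plan is to derive Corollary~\ref{cor:itoauto2} from Corollary~\ref{cor:itoauto} by specializing the $\mathbb F$-stopping time to the deterministic terminal time $T$. First I would apply Lemma~\ref{lemma:transformation} to the given mild It\^o process $X$ (with evolution family $S$, mild drift $Y$, and mild diffusion $Z$) in order to obtain an up to indistinguishability unique stochastic process $\bar X \colon [t_0,T] \times \Omega \to \check V$ with continuous sample paths which satisfies $\P(\bar X_t = S_{t,T} X_t) = 1$ for all $t \in [t_0,T)$. Next I would invoke Corollary~\ref{cor:itoauto} with this $X$, this $\bar X$, the test function $\varphi \in C^2(\check V, \V)$, and the $\mathbb F$-stopping time $\tau \colon \Omega \to [t_0,T]$ given by $\tau(\omega) = T$ for every $\omega \in \Omega$. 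Items~(i) and~(ii) of Corollary~\ref{cor:itoauto} then coincide verbatim with items~(i) and~(ii) of Corollary~\ref{cor:itoauto2}, while item~\eqref{eq:mild_ito_type_independent} of Corollary~\ref{cor:itoauto} yields, since $\tau \equiv T$,
\begin{equation*}
[\varphi(\bar X_T)]_{\P, \mathcal B(\V)}
=
[\varphi(S_{t_0,T} X_{t_0})]_{\P, \mathcal B(\V)}
+
\int_{t_0}^T (\mathcal{L}^S_{s,T} \varphi)(X_s, Y_s, Z_s) \, {\bf ds}
+
\int_{t_0}^T \varphi'(S_{s,T} X_s) \, S_{s,T} \, Z_s \, dW_s ,
\end{equation*}
and the right-hand side here is exactly the right-hand side of~\eqref{eq:mild_ito_type_independent2}.

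It remains to identify $\bar X_T$ with $X_T \mathbbm{1}_{\{X_T \in \check V\}}$ inside the test function. For this I would use Lemma~\ref{lemma:transformation2}: item~\eqref{item:transfortmation2} of that lemma gives $\P(\bar X_T = X_T) = 1$, and since $\bar X$ is $\check V$-valued this immediately proves item~\eqref{item:terminal_well}, namely $\P(X_T \in \check V) = 1$. As $0 \in \check V$, the map $X_T \mathbbm{1}_{\{X_T \in \check V\}}$ is $\check V$-valued, and on the $\P$-full event $\{X_T \in \check V\} \cap \{\bar X_T = X_T\}$ it agrees with $X_T$ and hence with $\bar X_T$; consequently $[\varphi(X_T \mathbbm{1}_{\{X_T \in \check V\}})]_{\P, \mathcal B(\V)} = [\varphi(\bar X_T)]_{\P, \mathcal B(\V)}$. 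Combining this with the displayed identity from the previous step establishes the remaining item of Corollary~\ref{cor:itoauto2}.

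There is essentially no substantive obstacle here: Corollary~\ref{cor:itoauto2} is a corollary in the literal sense, obtained from Corollary~\ref{cor:itoauto}, Lemma~\ref{lemma:transformation}, and Lemma~\ref{lemma:transformation2} by plugging in $\tau \equiv T$. The only point calling for a line of justification is the well-definedness of $\varphi(X_T \mathbbm{1}_{\{X_T \in \check V\}})$, which requires $\{X_T \in \check V\}$ to be an event and the associated $\check V$-valued random variable to be $\mathcal F/\mathcal B(\check V)$-measurable; this follows from the Lusin--Souslin theorem, using that the separable Banach space $\check V$ is Polish and that the continuous dense inclusion $\check V \hookrightarrow V$ is injective, so that $\check V$ is a Borel subset of $V$ and the inverse of the inclusion map is Borel-measurable on $\check V$.
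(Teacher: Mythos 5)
Your proposal is correct and follows exactly the route the paper intends: the paper gives no separate proof of Corollary~\ref{cor:itoauto2} but declares it an immediate consequence of Corollary~\ref{cor:itoauto} (with $\tau\equiv T$), Lemma~\ref{lemma:transformation}, and item~\eqref{item:transfortmation2} of Lemma~\ref{lemma:transformation2}, which is precisely your argument. Your additional remark justifying the measurability of $\{X_T\in\check{V}\}$ and of $X_T\mathbbm{1}_{\{X_T\in\check{V}\}}$ via the Lusin--Souslin theorem is a sensible way to make explicit a point the paper leaves implicit.
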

\subsection{Mild Dynkin-type formula}
Under suitable additional assumptions (see Corollary~\ref{cor:mild_dynkin_formula} below),
the stochastic integral 
in \eqref{eq:mild_ito_type_independent} is integrable 
and centered.
This is the subject of the following result.

\begin{corollary}[Mild Dynkin-type formula]
	\label{cor:mild_dynkin_formula}
	Assume the setting in Subsection~\ref{sec:setting_mild_calculus},
	let
	$
	X \colon [ t_0, T ] \times \Omega
	\rightarrow V
	$
	be a mild It{\^o} process
	with evolution family
	$
	S \colon \angle
	\rightarrow L( \hat{V}, \check{V} )
	$,
	mild drift
	$
	Y \colon [ t_0, T ] \times
	\Omega \rightarrow \hat{V}
	$, 
	and mild
	diffusion
	$
	Z \colon [ t_0, T ] \times
	\Omega \rightarrow
	\gamma(U,\hat{V})
	$, 
	let
	$
	\bar{X} \colon [ t_0, T ] \times \Omega \rightarrow \check{V}
	$ 
	be a stochastic process
	with continuous sample paths
	which satisfies
	$
	\forall \, t \in [ t_0, T ) \colon
	\P\big(
	\bar{X}_t
	=
	S_{ t, T } X_t
	\big) = 1
	$
	(see Lemma~\ref{lemma:transformation}),
	let
	$
	\varphi \in C^2(
	\check{V}, \V
	)
	$,
	and let 
	$ \tau \colon \Omega \rightarrow [ t_0, T ] $ 
	be an
	$
	\mathbb{F}
	$-stopping time which satisfies
	that 
	$
	\ES\big[
	|
	\int_{ t_0 }^\tau
	\|
	\varphi'(
	S_{ s, T }
	X_s
	) \,
	S_{ s, T } \,
	Z_s
	\|^2_{ \gamma( U, \V ) }
	\, ds
	|^{1/2}
	\big]
	+
	\min\!\big\{
	\ES\big[
	\|
	[ \varphi( S_{ t_0, T } X_{ t_0 } ) ]_{ \P, \B( \V ) }
	+ 
	\int_{ t_0 }^\tau
	(
	\mathcal{L}^S_{ s, T } \varphi
	)( X_s, Y_s, Z_s )
	\, { \bf ds }
	\|_\V
	\big]
	$,
	$
	\ES\big[
	\| 
	\varphi(
	\bar{X}_{ \tau }
	)
	\|_\V
	\big]
	\big\}
	< \infty
	$. 
	Then
	\begin{enumerate}[(i)]
		\item \label{item:trivial_step}
		it holds that
		$
		\ES\big[
		\| \varphi(\bar{X}_\tau) \|_\V
		\big]
		+
		\E \big[
		\|
		[ \varphi( S_{ t_0, T } X_{ t_0 } ) ]_{\P, \B( \V ) }
		+ 
		\int_{ t_0 }^\tau
		(
		\mathcal{L}^S_{ s, T } \varphi
		)( X_s, Y_s, Z_s )
		\, { \bf ds }
		\|_\V
		\big]
		< \infty
		$
		and 
		\item
		it holds that
		\begin{equation}
		\label{eq:mild_dynkin_formula}
		\begin{split}
		&
		\ES\big[
		\varphi( \bar{X}_\tau )
		\big]
		=
		\ES\Big[
		[ \varphi(
		S_{ t_0, T }
		X_{ t_0 }
		)
		]_{\P, \B(\V)}
		+
		\smallint\limits_{ t_0 }^\tau
		(
		\mathcal{L}^S_{ s, T } \varphi
		)( X_s, Y_s, Z_s )
		\, { \bf ds }
		\Big]
		.
		\end{split}
		\end{equation}
	\end{enumerate}
\end{corollary}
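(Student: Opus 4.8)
The plan is to derive \eqref{eq:mild_dynkin_formula} from the mild It\^o formula in Corollary~\ref{cor:itoauto}. Applying Corollary~\ref{cor:itoauto} to the mild It\^o process $ X $, its regularization $ \bar{X} $, the test function $ \varphi $, and the given $ \mathbb{F} $-stopping time $ \tau $ yields that $ \P( \int_{t_0}^T \| ( \mathcal{L}^S_{s,T} \varphi )( X_s, Y_s, Z_s ) \|_\V \, ds < \infty ) = 1 $, that $ \P( \int_{t_0}^T \| \varphi'( S_{s,T} X_s ) S_{s,T} Z_s \|_{ \gamma( U, \V ) }^2 \, ds < \infty ) = 1 $, and that
\begin{equation*}
	[ \varphi( \bar{X}_\tau ) ]_{ \P, \mathcal{B}( \V ) }
	=
	\Big(
	[ \varphi( S_{t_0,T} X_{t_0} ) ]_{ \P, \mathcal{B}( \V ) }
	+
	\int_{t_0}^\tau ( \mathcal{L}^S_{s,T} \varphi )( X_s, Y_s, Z_s ) \, ds
	\Big)
	+
	\int_{t_0}^\tau \varphi'( S_{s,T} X_s ) \, S_{s,T} \, Z_s \, dW_s
	.
\end{equation*}
The heart of the matter is then to show that the stochastic integral on the right-hand side lies in $ L^1( \P; \V ) $ and has vanishing expectation.

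To establish this, I would use that $ \tau $ is an $ \mathbb{F} $-stopping time, so that the process $ [t_0,T] \times \Omega \ni (s,\omega) \mapsto \mathbbm{1}_{ \{ s \le \tau(\omega) \} } \varphi'( S_{s,T} X_s(\omega) ) S_{s,T} Z_s(\omega) \in \gamma( U, \V ) $ is $ \mathbb{F} / \mathcal{B}( \gamma( U, \V ) ) $-predictable, that $ \int_{t_0}^\tau \varphi'( S_{s,T} X_s ) S_{s,T} Z_s \, dW_s = \int_{t_0}^T \mathbbm{1}_{ \{ s \le \tau \} } \varphi'( S_{s,T} X_s ) S_{s,T} Z_s \, dW_s $, and that the hypothesis on $ \tau $ ensures $ \E[ ( \int_{t_0}^T \mathbbm{1}_{ \{ s \le \tau \} } \| \varphi'( S_{s,T} X_s ) S_{s,T} Z_s \|_{ \gamma(U,\V) }^2 \, ds )^{ \nicefrac{1}{2} } ] < \infty $. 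By the $ L^1 $-theory of stochastic integration in UMD Banach spaces with type $ 2 $ --- in particular, a Burkholder--Davis--Gundy-type bound of the form $ \E \| \int_{t_0}^T \Phi_s \, dW_s \|_\V \le c \, \E[ ( \int_{t_0}^T \| \Phi_s \|_{ \gamma(U,\V) }^2 \, ds )^{ \nicefrac{1}{2} } ] $ for a suitable $ c \in (0,\infty) $, together with the fact that such stochastic integrals are centered (which follows by $ L^1 $-approximation with elementary integrands) --- this shows $ \int_{t_0}^\tau \varphi'( S_{s,T} X_s ) S_{s,T} Z_s \, dW_s \in L^1( \P; \V ) $ and $ \E[ \int_{t_0}^\tau \varphi'( S_{s,T} X_s ) S_{s,T} Z_s \, dW_s ] = 0 $.

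Given this, item~(i) is immediate: the displayed identity shows that $ \varphi( \bar{X}_\tau ) $ minus $ [ \varphi( S_{t_0,T} X_{t_0} ) ]_{ \P, \mathcal{B}( \V ) } + \int_{t_0}^\tau ( \mathcal{L}^S_{s,T} \varphi )( X_s, Y_s, Z_s ) \, ds $ belongs to $ L^1( \P; \V ) $, so by the triangle inequality $ \V $-integrability of either of these two random variables entails $ \V $-integrability of the other; since the hypothesis on $ \tau $ includes that the minimum of their two $ \V $-norm expectations is finite, both are $ \V $-integrable, which is item~(i). For item~(ii) I would then take expectations in the displayed identity: the stochastic integral term drops out by the preceding step, leaving $ \E[ \varphi( \bar{X}_\tau ) ] = \E[ [ \varphi( S_{t_0,T} X_{t_0} ) ]_{ \P, \mathcal{B}( \V ) } + \int_{t_0}^\tau ( \mathcal{L}^S_{s,T} \varphi )( X_s, Y_s, Z_s ) \, ds ] $, which is \eqref{eq:mild_dynkin_formula}. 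The main obstacle is precisely the middle paragraph --- rigorously justifying that the stopped $ \V $-valued stochastic integral is integrable and centered under only the $ \nicefrac{1}{2} $-moment condition on its quadratic variation, which requires the $ L^1 $-stochastic-integration theory (BDG inequalities for $ \gamma(U,\V) $-valued integrands and the associated optional-stopping and approximation arguments) rather than the more elementary $ L^2 $-isometry; the remaining steps are routine manipulations on top of Corollary~\ref{cor:itoauto}.
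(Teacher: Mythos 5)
Your proposal is correct and follows essentially the same route as the paper: apply the mild It\^o formula of Corollary~\ref{cor:itoauto} to obtain the pathwise identity, then use the hypothesis $\ES\big[|\int_{t_0}^\tau\|\varphi'(S_{s,T}X_s)S_{s,T}Z_s\|^2_{\gamma(U,\V)}\,ds|^{1/2}\big]<\infty$ together with the Burkholder--Davis--Gundy-type inequality for $\gamma(U,\V)$-valued integrands (Van Neerven et al.) to conclude that the stopped stochastic integral is integrable with vanishing expectation, after which item~(i) follows from the minimum condition and item~(ii) by taking expectations. The only cosmetic difference is that the paper phrases the centering step as ``the stopped integral process is a local martingale which, by the BDG bound, is a true martingale,'' whereas you argue directly on the random variable via the $L^1$-estimate and approximation by elementary integrands; these are equivalent formulations of the same argument.
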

\begin{proof}[Proof of Corollary~\ref{cor:mild_dynkin_formula}]
	%
	%Throughout this proof let
	%$ \tau_n \colon \Omega \to [t_0, T] $, $ n \in \N $,
	%be the functions which satisfy for all $ n \in \N $
	%that
	%$ \tau_n 
	%= 
	%\inf( \{T\} \cup \{ t \in [t_0,T] \colon 
	%\|
	%\varphi( S_{ t_0, T } X_{ t_0 } )
	%+ 
	%\int_{ t_0 }^t
	%(
	%\mathcal{L}^S_{ s, T } \varphi
	%)( X_s, Y_s, Z_s )
	%\, ds
	%\|_\V
	%+
	%\int_{t_0}^t
	%\| \phi'(S_{s,T} X_s) S_{s,T} Z_s \|_{\gamma(U, \V)}^2 \, ds 
	%\geq n
	% \} ) $.
	%
	%
	%Note that items~\eqref{eq:important_condition}--\eqref{eq:important_condition2} 
	%of Corollary~\ref{cor:itoauto}
	%show that
	%$ \P ( 
	%\liminf_{n \to \infty} \tau_n 
	%=
	%\limsup_{n\to \infty} \tau_n
	%= T) = 1 $.
	%
	First, note that
	item~\eqref{eq:mild_ito_type_independent}
	of Corollary~\ref{cor:itoauto}
	proves that
	\begin{equation}
	\label{eq:Ito_one}
	\begin{split}
	&
	[ \varphi( \bar{X}_\tau ) ]_{\P, \mathcal{B}( \V ) }
	=
	[
	\varphi(
	S_{ t_0, T }
	X_{ t_0 }
	)
	]_{\P, \mathcal{B}( \V ) }
	\\
	& 
	+
	\int_{ t_0 }^\tau
	(
	\mathcal{L}^S_{ s, T } \varphi
	)( X_s, Y_s, Z_s )
	\, { \bf ds }
	+
	\int_{ t_0 }^\tau
	\varphi'(
	S_{ s, T }
	X_s
	) \,
	S_{ s, T } \,
	Z_s \, dW_s
	.
	\end{split}
	\end{equation}
	Moreover, the fact that 
	$ \int_{t_0}^{ \min \{ t, \tau \} } \varphi'(S_{s,T} X_s ) S_{s,T} Z_s \, dW_s $,
	$ t \in [t_0,T] $,
	is a local $ \mathbb{F} $-martingale,
	the assumption that
	$ \ES\big[
	|
	\int_{ t_0 }^\tau
	\|
	\varphi'(
	S_{ s, T }
	X_s
	) \,
	S_{ s, T }  
	Z_s
	\|^2_{ \gamma( U, \V ) }
	\, ds
	|^{1/2}
	\big] <  \infty $,
	and, e.g., 
	the Burkholder-Davis-Gundy type inequality 
	in
	Van Neerven et al.\ \cite[Theorem~4.7]{VanNeerven2015}
	ensure that
	\begin{equation} 
	\int_{t_0}^{ \min \{ t, \tau \} } \varphi'(S_{s,T} X_s ) S_{s,T} Z_s \, dW_s, \quad t \in [ t_0, T ]
	,
	\end{equation}
	is an $ \mathbb{F} $-martingale. 
	This, the fact that
	\begin{equation}
	\min\!\big\{
	\E\big[
	\|
	[ \varphi( S_{ t_0, T } X_{ t_0 } ) ]_{\P, \B(\V)}
	+ 
	\smallint\nolimits_{ t_0 }^\tau
	(
	\mathcal{L}^S_{ s, T } \varphi
	)( X_s, Y_s, Z_s )
	\, { \bf ds}
	\|_\V
	\big],
	\ES\big[
	\| 
	  \varphi(
	\bar{X}_{ \tau }
	)
	\|_\V
	\big]
	\big\}
	< \infty
	,
	\end{equation}
	and~\eqref{eq:Ito_one}
	prove that item~\eqref{item:trivial_step} holds
	and that
	\begin{equation} 
	\begin{split}
	&
	\E[ \varphi( \bar{X}_{ \tau } ) ]
	=
	\E\bigg[
[	\varphi(
	S_{ t_0, T }
	X_{ t_0 }
	) 	]_{\P, \B(\V)}
	+
	\int_{ t_0 }^{ \tau }
	(
	\mathcal{L}^S_{ s, T } \varphi
	)( X_s, Y_s, Z_s )
	\, { \bf ds }
	\bigg]
	.
	\end{split}
	\end{equation}
	The proof of Corollary~\ref{cor:mild_dynkin_formula}
	is thus completed.
\end{proof}
%
%Corollary~\ref{cor:mild_dynkin_formula} is an immediate consequence 
%of Corollary~\ref{cor:itoauto} 
%and, e.g., of the Burkholder-Davis-Gundy inequality 
%in Problem 3.29 in Karatzas \& Shreve~\cite{ks91}.
%
%
%
\subsection{Weak estimates for terminal values of mild It\^{o} processes}
\label{sec:weak_estimates_terminal}

\begin{proposition}
	\label{prop:mild_ito_stopping_limit}
	Assume the setting in Subsection~\ref{sec:setting_mild_calculus},
	let
	$
	X \colon [ t_0, T ] \times \Omega
	\rightarrow V
	$
	be a mild It{\^o} process
	with evolution family
	$
	S \colon \angle
	\rightarrow L( \hat{V}, \check{V} )
	$,
	mild drift
	$
	Y \colon [ t_0, T ] \times
	\Omega \rightarrow \hat{V}
	$, 
	and mild
	diffusion
	$
	Z \colon [ t_0, T ] \times
	\Omega \rightarrow
	\gamma(U,\hat{V})
	,
	$
	let
	$
	\varphi \in C^2(
	\check{V}, \V
	)
	,
	$
	and assume that 
	$
	\big\{
	\| 
	\varphi(
	[
	S_{ t_0, T } \, X_{ t_0 }
	]_{\P, \B( \check{V} )}
	+
	\int^\tau_{ t_0 }
	S_{ s, T } \, Y_s 
	\, { \bf ds}
	+
	\int^\tau_{ t_0 }
	S_{ s, T } \, Z_s 
	\, dW_s
	) 
	\|_\V
	\colon
	\mathbb{F}\text{-stopping time }
	\tau \colon \Omega \rightarrow [ t_0, T ]
	\big\}
	$ 
	is uniformly $ \P $-integrable.
	Then
	\begin{enumerate}[(i)]
		\item \label{item_well_defined}
		it holds that
		$ \P ( X_T \in \check{V} ) = 1 $,
		\item \label{item:integrability} it holds that 
		$
		\ES\big[
		\big\|
		\varphi (X_T \mathbbm{1}_{ \{ X_T \in \check{V} \} } )
		\big\|_\V
		+
		\|
		\varphi( S_{ t_0, T } X_{ t_0 } )
		\|_\V
		\big]
		< \infty
		$, 
		and 
		\item it holds that
		\begin{equation}
		\label{eq:mild_ito_stopping_limit}
		\begin{split}
		&
		\big\|
		\ES\big[
		\varphi ( X_T \mathbbm{1}_{ \{ X_T \in \check{V} \} } )
		\big]
		\big\|_\V
		\leq
		\big\|
		\ES\big[
		\varphi( S_{ t_0, T } X_{ t_0 } )
		\big]
		\big\|_\V
		+ 
		\smallint_{ t_0 }^T
		\E \big[
		\|
		(
		\mathcal{L}^S_{ s, T } \varphi
		)( X_s, Y_s, Z_s )
		\|_\V
		\big]
		\, ds
		.
		\end{split}
		\end{equation}
	\end{enumerate}
\end{proposition}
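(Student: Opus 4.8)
The plan is to derive \eqref{eq:mild_ito_stopping_limit} from the mild Dynkin-type formula in Corollary~\ref{cor:mild_dynkin_formula}, applied along a sequence of localizing stopping times, and then to pass to the limit using the uniform integrability hypothesis and Vitali's convergence theorem.

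First I would invoke Lemma~\ref{lemma:transformation} to fix a stochastic process $ \bar X \colon [t_0,T] \times \Omega \to \check V $ with continuous sample paths satisfying $ \forall\, t \in [t_0,T) \colon \P( \bar X_t = S_{t,T} X_t ) = 1 $. Since the process $ t \mapsto [S_{t_0,T} X_{t_0}]_{\P, \B(\check V)} + \int_{t_0}^t S_{s,T} Y_s \, {\bf ds} + \int_{t_0}^t S_{s,T} Z_s \, dW_s $ has a continuous modification that, by Lemma~\ref{lemma:transformation2}, is indistinguishable from $ \bar X $, optional stopping for the Banach-space-valued stochastic integral shows that for every $ \mathbb{F} $-stopping time $ \tau \colon \Omega \to [t_0,T] $ one has $ \bar X_\tau = [S_{t_0,T} X_{t_0}]_{\P, \B(\check V)} + \int_{t_0}^\tau S_{s,T} Y_s \, {\bf ds} + \int_{t_0}^\tau S_{s,T} Z_s \, dW_s $ $ \P $-a.s.; hence the standing hypothesis says exactly that $ \{ \| \varphi( \bar X_\tau ) \|_\V \colon \tau \text{ an } \mathbb{F}\text{-stopping time} \to [t_0,T] \} $ is uniformly $ \P $-integrable, and in particular bounded in $ L^1(\P;\R) $. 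Item~\eqref{item_well_defined} then follows from item~\eqref{item:terminal_well} of Corollary~\ref{cor:itoauto2} (or directly from item~\eqref{item:transfortmation2} of Lemma~\ref{lemma:transformation2}, which gives $ \P(\bar X_T = X_T) = 1 $ with $ \bar X $ being $ \check V $-valued), and moreover $ \varphi(\bar X_T) = \varphi( X_T \mathbbm{1}_{ \{ X_T \in \check V \} } ) $ $ \P $-a.s.; taking $ \tau \equiv t_0 $ in the uniform integrability family (and using $ \bar X_{t_0} = S_{t_0,T} X_{t_0} $ $ \P $-a.s.) yields $ \ES[ \| \varphi( S_{t_0,T} X_{t_0} ) \|_\V ] < \infty $.

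Next I would localize the stochastic integral. By the second item of Corollary~\ref{cor:itoauto2} the $ \mathbb{F} $-stopping times
\[
\sigma_n = \inf\!\Big\{ t \in [t_0,T] \colon \textstyle\int_{t_0}^t \| \varphi'( S_{s,T} X_s ) S_{s,T} Z_s \|_{\gamma(U,\V)}^2 \, ds \geq n \Big\}, \qquad n \in \N,
\]
with $ \inf \emptyset = T $, satisfy $ \int_{t_0}^{\sigma_n} \| \varphi'( S_{s,T} X_s ) S_{s,T} Z_s \|_{\gamma(U,\V)}^2 \, ds \leq n $ $ \P $-a.s.\ and $ \sigma_n \uparrow T $ $ \P $-a.s. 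Consequently $ \ES[ | \int_{t_0}^{\sigma_n} \| \varphi'( S_{s,T} X_s ) S_{s,T} Z_s \|_{\gamma(U,\V)}^2 \, ds |^{1/2} ] \leq \sqrt{n} < \infty $, and $ \ES[ \| \varphi( \bar X_{\sigma_n} ) \|_\V ] < \infty $ since the uniform integrability family is $ L^1 $-bounded; hence Corollary~\ref{cor:mild_dynkin_formula} applies with $ \tau = \sigma_n $ and shows, for every $ n \in \N $, that $ \E[ \| [\varphi( S_{t_0,T} X_{t_0} )]_{\P, \B(\V)} + \int_{t_0}^{\sigma_n} (\mathcal{L}^S_{s,T}\varphi)( X_s, Y_s, Z_s ) \, {\bf ds} \|_\V ] < \infty $ and that
\[
\ES\big[ \varphi( \bar X_{\sigma_n} ) \big] = \ES\Big[ [\varphi( S_{t_0,T} X_{t_0} )]_{\P, \B(\V)} + \textstyle\int_{t_0}^{\sigma_n} (\mathcal{L}^S_{s,T}\varphi)( X_s, Y_s, Z_s ) \, {\bf ds} \Big].
\]
Taking norms, using the triangle inequality for the Bochner integral, $ \sigma_n \leq T $, and Tonelli's theorem (together with the first item of Corollary~\ref{cor:itoauto2}) gives, for every $ n \in \N $,
\[
\big\| \ES[ \varphi( \bar X_{\sigma_n} ) ] \big\|_\V \leq \big\| \ES[ \varphi( S_{t_0,T} X_{t_0} ) ] \big\|_\V + \int_{t_0}^T \E\big[ \| (\mathcal{L}^S_{s,T}\varphi)( X_s, Y_s, Z_s ) \|_\V \big] \, ds.
\]

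Finally I would let $ n \to \infty $. Since $ \bar X $ has continuous sample paths, $ \sigma_n \uparrow T $ $ \P $-a.s., and $ \varphi $ is continuous, $ \varphi( \bar X_{\sigma_n} ) \to \varphi( \bar X_T ) = \varphi( X_T \mathbbm{1}_{ \{ X_T \in \check V \} } ) $ $ \P $-a.s.; as $ \{ \| \varphi( \bar X_{\sigma_n} ) \|_\V \}_{n \in \N} $ is uniformly $ \P $-integrable (a subfamily of the standing hypothesis family), Vitali's convergence theorem in the separable Banach space $ \V $ yields $ \ES[ \| \varphi( X_T \mathbbm{1}_{ \{ X_T \in \check V \} } ) \|_\V ] < \infty $ and $ \ES[ \varphi( \bar X_{\sigma_n} ) ] \to \ES[ \varphi( X_T \mathbbm{1}_{ \{ X_T \in \check V \} } ) ] $ in $ \V $, which, combined with the two preceding displays, establishes items~\eqref{item:integrability} and~\eqref{eq:mild_ito_stopping_limit}. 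The main obstacle will be the localization bookkeeping: verifying that the $ \sigma_n $ are genuine $ \mathbb{F} $-stopping times increasing to $ T $ and that they bring the argument within the scope of Corollary~\ref{cor:mild_dynkin_formula} (so that the stochastic integral in the mild It\^o formula becomes a true, rather than merely local, martingale), and then invoking Vitali's theorem in the $ \V $-valued rather than real-valued setting; the identification of the standing hypothesis family with $ \{ \| \varphi( \bar X_\tau ) \|_\V \}_\tau $ via optional stopping for the UMD-type-$2$-valued stochastic integral is a secondary, more routine point.
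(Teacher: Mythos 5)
Your proposal is correct and follows essentially the same route as the paper's proof: the same localizing stopping times (your $\sigma_n$ are exactly the paper's $\tau_n$), the same application of Corollary~\ref{cor:mild_dynkin_formula} together with the triangle inequality and Tonelli, and the same limit passage via $\sigma_n \uparrow T$, Lemma~\ref{lemma:transformation2}, and the uniform $\P$-integrability of $\{\|\varphi(\bar X_{\sigma_n})\|_\V\}$ (the paper leaves the Vitali step implicit, you state it explicitly).
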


\begin{proof}[Proof of Proposition~\ref{prop:mild_ito_stopping_limit}]
	Throughout this proof let
	$
	\tau_n \colon \Omega \rightarrow [ t_0, T ]
	$, 
	$ n \in \N $, 
	be the functions which satisfy for all $ n \in \N $ that 
	\begin{equation}
	\label{eq:taun_def}
	\tau_n
	=
	\inf\!\left(
	\{ T \}
	\cup
	\left\{
	t \in [ t_0, T ] \colon
	\smallint_{ t_0 }^t
	\|
	\varphi'(
	S_{ s, T }
	X_s
	) \,
	S_{ s, T } \,
	Z_s
	\|^2_{ \gamma( U, \V ) }
	\, ds
	\geq n
	\right\}
	\right)
	\end{equation}
	and let 
	$
	\bar{X} \colon [ t_0, T ] \times \Omega \rightarrow \check{V}
	$ 
	be a stochastic process with continuous sample paths
	which satisfies
	\begin{equation} 
	\forall \, 
	t \in [ t_0, T ) 
	\colon
	\P\big(
	\bar{X}_t
	=
	S_{ t, T } X_t
	\big) = 1 
	\end{equation}
	(cf.\ Lemma~\ref{lemma:transformation}).
	Note that item~\eqref{item:terminal_well}
	of Corollary~\ref{cor:itoauto2}
	establishes item~\eqref{item_well_defined}.
	Moreover, observe that 
	the assumption that the set
	$
	\big\{
	\| 
	\varphi(
	[ S_{ t_0, T } \, X_{ t_0 } ]_{\P, \B(\check{V})}
	+
	\int^\tau_{ t_0 }
	S_{ s, T } \, Y_s 
	\, { \bf ds }
	+
	\int^\tau_{ t_0 }
	S_{ s, T } \, Z_s 
	\, dW_s
	) 
	\|_\V
	\colon
	$
	$
	\mathbb{F}\text{-stopping time }
	\tau \colon \Omega \rightarrow [ t_0, T ]
	\big\}
	$ 
	is uniformly $ \P $-integrable
	proves
	item~\eqref{item:integrability}.
	% that
	%$
	%  \ES\big[
	%    \|
	%      \varphi(X_T)
	%    \|_\V
	%    +
	%    \|
	%      \varphi( S_{ t_0, T } X_{ t_0 } )
	%    \|_\V
	%  \big]
	%  < \infty
	%$.
	%
	%
	%It thus remains to prove \eqref{eq:mild_ito_stopping_limit}.
	%
	%
	%Moreover, observe that
	%for all $ n \in \N $
	%it holds that
	%$ \tau_n $ 
	%is an $ \mathbb{F} $-stopping time.
	Next note that item~\eqref{eq:W_defined}
	of
	Corollary~\ref{cor:itoauto} shows that  
	\begin{equation} 
	\P\bigg(
	\int_{ t_0 }^T
	\|
	\varphi'(
	S_{ s, T }
	X_s
	) \,
	S_{ s, T } \,
	Z_s
	\|^2_{ \gamma( U, \V ) }
	\, ds
	< \infty \bigg) = 1
	.
	\end{equation}
	This establishes that 
	\begin{equation} 
	\label{eq:NR}
	\P\Big( 
	\lim_{ n \rightarrow \infty } \tau_n
	=
	T \Big) = 1
	.
	\end{equation}
	In addition, note that  
	Lemma~\ref{lemma:transformation2} and
	the assumption that the set 
	$
	\big\{ 
	\|
	\varphi( 
	[ S_{ t_0, T } \, X_{ t_0 } ]_{\P, \B( \check{V} ) }
	+
	\int^\tau_{ t_0 }
	S_{ s, T } \, Y_s 
	\, { \bf ds}
	+
	\int^\tau_{ t_0 }
	S_{ s, T } \, Z_s 
	\, dW_s
	)
	\|_\V
	\colon
	\mathbb{F}\text{-stopping} $ $ \text{time }
	\tau \colon \Omega \rightarrow [ t_0, T ]
	\big\}
	$
	is uniformly $ \P $-integrable
	ensure that the set
	$  
	\{
	\| 
	\varphi( \bar{X}_{ \tau_n } )
	\|_\V
	\colon
	n \in \N
	\}
	$
	is uniformly $ \P $-integrable.
	Equation~\eqref{eq:taun_def}
	hence shows that for all $ n \in \N $ 
	it holds that
	\begin{equation} 
	\ES\big[
	\|
	\varphi( \bar{X}_{ \tau_n } )
	\|_\V
	\big]
	+
	\ES\bigg[
	\int_0^{ \tau_n }
	\|
	\varphi'( S_{ s, T } X_s) S_{ s, T } Z_s )
	\|_{ \gamma( U, \V ) }^2
	\,
	ds
	\bigg]
	< \infty
	.
	\end{equation}
	The fact that
	for all $ n \in \N $ it holds that
	$ \tau_n $ is an $ \mathbb{F} $-stopping time
	thus allows us to apply Corollary~\ref{cor:mild_dynkin_formula}
	to obtain that for all $ n \in \N $
	it holds that
	\begin{equation}
	\begin{split} 
	\ES\big[
	\varphi( \bar{X}_{ \tau_n } )
	\big]
	&
	=
	\ES\bigg[
	[ \varphi(
	S_{ t_0, T }
	X_{ t_0 }
	)
	]_{\P, \B( \V ) }
	+
	\int_{ t_0 }^{ \tau_n }
	(
	\mathcal{L}^S_{ s, T } \varphi
	)( X_s, Y_s, Z_s )
	\, { \bf ds }
	\bigg]
	\\
	&
	=
	\ES[
	\varphi(
	S_{ t_0, T }
	X_{ t_0 }
	)
	] 
	+
	\E \bigg[
	\int_{ t_0 }^{ \tau_n }
	(
	\mathcal{L}^S_{ s, T } \varphi
	)( X_s, Y_s, Z_s )
	\, { \bf ds }
	\bigg]
	.
	\end{split}
	\end{equation}
	The triangle inequality hence proves that 
	\begin{equation}
	\limsup_{ n \to \infty }
	\big\|
	\ES\big[
	\varphi( \bar{X}_{ \tau_n } )
	\big]
	\big\|_\V
	\leq
	\big\|
	\ES\big[
	\varphi(
	S_{ t_0, T }
	X_{ t_0 }
	)
	\big]\|_\V  
	+
	\smallint\nolimits_{ t_0 }^T
	\ES\big[
	\|
	(
	\mathcal{L}^S_{ s, T } \varphi
	)( X_s, Y_s, Z_s )
	\|_\V
	\big]
	\, ds
	.
	\end{equation}
	This together with~\eqref{eq:NR},
	item~\eqref{item:transfortmation2} of
	Lemma~\ref{lemma:transformation2},
	and the 
	uniform $ \P $-integrability of 
	$
	\{
	\| 
	\varphi( \bar{X}_{ \tau_n } )
	\|_\V
	\colon
	n \in \N
	\}
	$
	assures~\eqref{eq:mild_ito_stopping_limit}.
	The proof of Proposition~\ref{prop:mild_ito_stopping_limit}
	is thus completed.
\end{proof}

\begin{proposition}[Test functions with at most polynomial growth]
	\label{prop:mild_ito_maximal_ineq}
	Assume the setting in Subsection~\ref{sec:setting_mild_calculus}, 
	let
	$
	X \colon [ t_0, T ] \times \Omega
	\rightarrow V
	$
	be a mild It{\^o} process
	with evolution family
	$
	S \colon \angle
	\rightarrow L( \hat{V}, \check{V} )
	$,
	mild drift
	$
	Y \colon [ t_0, T ] \times
	\Omega \rightarrow \hat{V}
	$, 
	and mild
	diffusion
	$
	Z \colon [ t_0, T ] \times
	\Omega \rightarrow
	\gamma(U,\hat{V})
	$, 
	and let 
	$ p \in [ 0, \infty ) $, 
	%$ q \in (1,\infty) $,
	$
	\varphi \in C^2(
	\check{V}, \V
	)
	$ 
	satisfy 
	$
	\sup_{ x \in \check{V} }
	\big[
	\| \varphi(x) \|_\V
	( 1 + \| x \|^p_{ \check{V} } )^{ - 1 }
	\big]
	< \infty
	$
	and
	$
	\E \big[
	|\!
	\int^T_{ t_0 }
	\| S_{ s, T } Z_s \|^2_{ \gamma( U, \check{V} ) }
	\, ds
	|^{ p / 2 } 
	+
	\| S_{ t_0, T } X_{ t_0 } \|_{ \check{V} }^p
	+
	| \! \int^T_{ t_0 }
	\| S_{ s, T } Y_s \|_{ \check{V} }
	\, ds
	|^p 
	\big] < \infty $.
	Then 
	\begin{enumerate}[(i)]
		\item it holds that
		$ \P ( X_T \in \check{V} ) = 1 $, 
		\item
		it holds that
		$
		\ES\big[
		\|
		\varphi ( X_T \mathbbm{1}_{ \{ X_T \in \check{V} \} } )
		\|_\V
		+
		\|
		\varphi( S_{ t_0, T } X_{ t_0 } )
		\|_\V
		\big]
		< \infty
		$, 
		and
		\item it holds that
		\begin{equation}
		\begin{split}
		&
		\big\|
		\ES\big[
		\varphi ( X_T \mathbbm{1}_{ \{ X_T \in \check{V} \} } )
		\big]
		\big\|_\V
		\leq
		\big\|
		\ES\big[
		\varphi( S_{ t_0, T } X_{ t_0 } )
		\big]
		\big\|_\V
		+
		\smallint_{ t_0 }^T
		\E \big[
		\|
		(
		\mathcal{L}^S_{ s, T } \varphi
		)( X_s, Y_s, Z_s )   
		\|_\V
		\big] 
		ds
		.
		\end{split}
		\end{equation}
	\end{enumerate}
\end{proposition}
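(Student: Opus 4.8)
The plan is to derive Proposition~\ref{prop:mild_ito_maximal_ineq} as an immediate consequence of Proposition~\ref{prop:mild_ito_stopping_limit}. To this end, writing
\begin{equation}
\Xi_\tau
:=
[ S_{ t_0, T } X_{ t_0 } ]_{ \P, \B( \check{V} ) }
+
\smallint^\tau_{ t_0 } S_{ s, T } Y_s \, { \bf ds }
+
\smallint^\tau_{ t_0 } S_{ s, T } Z_s \, dW_s
\end{equation}
for every $ \mathbb{F} $-stopping time $ \tau \colon \Omega \to [ t_0, T ] $, the only thing that has to be checked is the hypothesis of Proposition~\ref{prop:mild_ito_stopping_limit}, namely that the set $ \{ \| \varphi( \Xi_\tau ) \|_\V \colon \tau \text{ an } \mathbb{F}\text{-stopping time} \} $ is uniformly $ \P $-integrable. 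My strategy for this is to bound every member of this family $ \P $-a.s.\ by one and the same $ \P $-integrable random variable; uniform $ \P $-integrability then follows at once.

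First I would observe that the assumed finiteness of $ \E[ \, | \!\smallint^T_{ t_0 } \| S_{ s, T } Z_s \|^2_{ \gamma( U, \check{V} ) } \, ds |^{ p / 2 } ] $ in particular yields $ \P( \smallint^T_{ t_0 } \| S_{ s, T } Z_s \|^2_{ \gamma( U, \check{V} ) } \, ds < \infty ) = 1 $, so the stochastic integral process $ [ t_0, T ] \ni t \mapsto \smallint^t_{ t_0 } S_{ s, T } Z_s \, dW_s $ admits a modification $ M \colon [ t_0, T ] \times \Omega \to \check{V} $ with continuous sample paths. Then I would introduce the candidate dominating random variable
\begin{equation}
\mathcal{Z}
:=
\| S_{ t_0, T } X_{ t_0 } \|_{ \check{V} }
+
\smallint^T_{ t_0 } \| S_{ s, T } Y_s \|_{ \check{V} } \, ds
+
\sup_{ t \in [ t_0, T ] } \| M_t \|_{ \check{V} }
\end{equation}
and verify that $ \E[ \mathcal{Z}^p ] < \infty $. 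This follows by combining the elementary inequality $ ( a + b + c )^p \le 3^{ \max \{ p, 1 \} }( a^p + b^p + c^p ) $ for $ a, b, c \in [ 0, \infty ) $, the assumed moment bound on $ S_{ t_0, T } X_{ t_0 } $, on $ \smallint^T_{ t_0 } \| S_{ s, T } Y_s \|_{ \check{V} } \, ds $, and on $ \smallint^T_{ t_0 } \| S_{ s, T } Z_s \|^2_{ \gamma( U, \check{V} ) } \, ds $, and --- for the last summand of $ \mathcal{Z} $ --- the Burkholder--Davis--Gundy-type maximal inequality for UMD Banach spaces with type $ 2 $ (see, e.g., \cite[Theorem~4.7]{VanNeerven2015}), which provides a constant $ C_{ p, \check{V} } \in [ 0, \infty ) $ with $ \E[ \sup_{ t \in [ t_0, T ] } \| M_t \|^p_{ \check{V} } ] \le C_{ p, \check{V} } \, \E[ \, | \!\smallint^T_{ t_0 } \| S_{ s, T } Z_s \|^2_{ \gamma( U, \check{V} ) } \, ds |^{ p / 2 } ] $ (the case $ p = 0 $ being trivial).

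Next, for an arbitrary $ \mathbb{F} $-stopping time $ \tau \colon \Omega \to [ t_0, T ] $ I would use that stopping commutes with It\^o integration together with the path continuity of $ M $ to obtain $ \smallint^\tau_{ t_0 } S_{ s, T } Z_s \, dW_s = M_\tau $ $ \P $-a.s., whence $ \| \smallint^\tau_{ t_0 } S_{ s, T } Z_s \, dW_s \|_{ \check{V} } \le \sup_{ t \in [ t_0, T ] } \| M_t \|_{ \check{V} } $ $ \P $-a.s.; together with the pathwise bound $ \| \smallint^\tau_{ t_0 } S_{ s, T } Y_s \, { \bf ds } \|_{ \check{V} } \le \smallint^T_{ t_0 } \| S_{ s, T } Y_s \|_{ \check{V} } \, ds $ this gives $ \| \Xi_\tau \|_{ \check{V} } \le \mathcal{Z} $ $ \P $-a.s. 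Using then the polynomial growth hypothesis, i.e.\ that $ C := \sup_{ x \in \check{V} } [ \| \varphi(x) \|_\V ( 1 + \| x \|^p_{ \check{V} } )^{ - 1 } ] < \infty $, I would conclude that $ \| \varphi( \Xi_\tau ) \|_\V \le C ( 1 + \| \Xi_\tau \|^p_{ \check{V} } ) \le C ( 1 + \mathcal{Z}^p ) $ $ \P $-a.s. Since $ \E[ C ( 1 + \mathcal{Z}^p ) ] < \infty $ and since a family of real random variables that are $ \P $-a.s.\ bounded in absolute value by one and the same $ \P $-integrable random variable is uniformly $ \P $-integrable, this establishes the required uniform $ \P $-integrability, and applying Proposition~\ref{prop:mild_ito_stopping_limit} then yields all three assertions.

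I expect the only genuinely delicate ingredients to be (a) the maximal inequality for the $ \check{V} $-valued stochastic integral in the full range $ p \in [ 0, \infty ) $ --- in particular for $ p \in ( 0, 2 ) $ --- which is supplied by the UMD/type-$ 2 $ Burkholder--Davis--Gundy inequality of Van Neerven et al., and (b) the identification of the $ L^0( \P; \check{V} ) $-valued object $ \smallint^\tau_{ t_0 } S_{ s, T } Z_s \, dW_s $ with the time-$ \tau $ value $ M_\tau $ of the continuous modification $ M $, which is the standard fact that stopping commutes with stochastic integration. Everything else is routine: a triangle inequality, the polynomial growth bound, and the elementary characterisation of uniform integrability via domination by an integrable random variable.
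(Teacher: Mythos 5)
Your proposal is correct and follows essentially the same route as the paper: both verify the uniform $\P$-integrability hypothesis of Proposition~\ref{prop:mild_ito_stopping_limit} by dominating $\|\varphi(\Xi_\tau)\|_\V$, via the polynomial growth bound, by a single integrable random variable built from $\|S_{t_0,T}X_{t_0}\|_{\check V}$, $\int_{t_0}^T\|S_{s,T}Y_s\|_{\check V}\,ds$, and the supremum of the continuous modification of the stochastic integral, the latter controlled by the Burkholder--Davis--Gundy-type inequality of Van Neerven et al.\ \cite[Theorem~4.7]{VanNeerven2015}. The only cosmetic difference is that the paper phrases the domination through $\sup_{t\in[t_0,T]}\|\varphi(\bar X_t)\|_\V$ and Lemma~\ref{lemma:transformation2}, whereas you identify the stopped integral with the continuous modification evaluated at $\tau$; both rest on the same facts.
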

\begin{proof}[Proof of Proposition~\ref{prop:mild_ito_maximal_ineq}]
	Throughout this proof let 
	$
	\bar{X} \colon [ t_0, T ] \times \Omega \rightarrow \check{V}
	$ 
	be a stochastic process 
	with 
	continuous sample paths which satisfies
	$
	\forall \, t \in [ t_0, T ) \colon
	\P\big(
	\bar{X}_t
	=
	S_{ t, T } X_t
	\big) = 1
	$
	(cf.\ Lemma~\ref{lemma:transformation})
	and let
	$ \mathcal{Z} \colon [t_0,T] \times \Omega \to \check{V} $
	be a stochastic process with continuous sample paths
	which satisfies for all
	$ t \in [t_0, T] $ that
	\begin{equation} 
	[ \mathcal{Z}_t ]_{ \P, \mathcal{B}( \check{V} ) } 
	= 
	\int_{t_0}^t 
	S_{s,T} Z_s \, dW_s
	.
	\end{equation}
	Observe that Lemma~\ref{lemma:transformation2} 
	implies that for all $ t \in [ t_0, T ] $ it holds 
	$ \P $-a.s.\ that 
	\begin{equation}
	\label{eq:path_varbar}
	\begin{split}
	&
	\| \varphi( \bar{X}_t ) \|_\V
	\leq
	\Bigg[
	\sup_{ x \in \check{V} }
	\frac{
		\| \varphi(x) \|_\V
	}{
	( 1 + \|x\|^p_{ \check{V} } )
}
\Bigg]
( 
1 + \| \bar{X}_t \|^p_{ \check{V} } 
)
\\ & \leq
3^p 
\Bigg[
\sup_{ x \in \check{V} }
\frac{
	\| \varphi(x) \|_\V
}{
( 1 + \|x\|^p_{ \check{V} } )
}
\Bigg]
\Bigg(
1
+
\|
S_{ t_0, T } X_{ t_0 }
\|^p_{ \check{V} }
+
\bigg|
\int^T_{ t_0 }
\|
S_{ s, T } Y_s
\|_{ \check{V} }
\, ds
\bigg|^p
+
\|
\mathcal{Z}_t
\|^p_{ \check{V} }
\Bigg)
%
%  \\ & \leq
%  3^p 
%  \Bigg[
%  \sup_{ x \in \check{V} }
%  \frac{
%  	\| \varphi(x) \|_\V
%  }{
%  ( 1 + \|x\|^p_{ \check{V} } )
%}
%\Bigg]
%\Bigg(
%2
%+
%\|
%S_{ t_0, T } X_{ t_0 }
%\|^p_{ \check{V} }
%+
%\bigg|
%\int^T_{ t_0 }
%\|
%S_{ s, T } Y_s
%\|_{ \check{V} }
%\, ds
%\bigg|^p
%+
%\|
%\mathcal{Z}_t
%\|^q_{ \check{V} }
%\Bigg)
.
\end{split}
\end{equation}
Moreover, e.g., 
the Burkholder-Davis-Gundy type inequality 
in
Van Neerven et al.\ \cite[Theorem~4.7]{VanNeerven2015}
% Problem 3.29 in Karatzas \& Shreve~\cite{ks91} 
shows that there exists a real number 
$ C \in [ 1, \infty ) $ such that 
%
% @@@ what about equivalence classes here? @@@
%
\begin{equation}
\label{eq:maximal_ineq}
\begin{split}
&
\E\bigg[
\sup_{ t \in [ t_0, T ] }
\|
\mathcal{Z}_t
\|^p_{ \check{V} }
\bigg]
\leq
C
\,
\E\Bigg[
\bigg|
\int^T_{ t_0 }
\|
S_{ s, T } \, Z_s
\|^2_{ \gamma( U, \check{V} ) }
\, ds
\bigg|^{ p / 2 }
\Bigg]
.
\end{split}
\end{equation}
Combining \eqref{eq:path_varbar} and \eqref{eq:maximal_ineq} yields that there exists a real number 
$ C \in [ 1, \infty ) $ such that 
\begin{equation}
\label{eq:varphi_bound}
\begin{split}
&
\E\bigg[
\sup_{ t \in [ t_0, T ] }
\|
\varphi( \bar{X}_t )
\|_\V
\bigg]
\leq
C
\Bigg(
1
+
\E[
\|
S_{ t_0, T } \, X_{ t_0 }
\|^p_{ \check{V} }
]
\\
&
+
\E\Bigg[
\bigg|
\int^T_{ t_0 }
\|
S_{ s, T } \, Y_s
\|_{ \check{V} }
\, ds
\bigg|^p 
\Bigg]
+
\E\Bigg[
\bigg|
\int^T_{ t_0 }
\|
S_{ s, T } \, Z_s
\|^2_{ \gamma( U, \check{V} ) }
\, ds
\bigg|^{ p/2 }
\Bigg]
\Bigg)
.
\end{split}
\end{equation}
The assumption that
$
\E \big[
\| S_{ t_0, T } X_{ t_0 } \|_{ \check{V} }^p
+
| \!
\int^T_{ t_0 }
\| S_{ s, T } Y_s \|_{ \check{V} }
\, ds
|^p 
+
| \!
\int^T_{ t_0 }
\| S_{ s, T } Z_s \|^2_{ \gamma( U, \check{V} ) }
\, ds
|^{ p / 2 } 
\big] < \infty $ 
hence
ensures that
\begin{equation} 
\E \bigg[
\sup_{ t \in [ t_0, T ] }
\|
\varphi( \bar{X}_t )
\|_V
\bigg ]
< \infty
.
\end{equation} 
Lemma~\ref{lemma:transformation2}
therefore proves that
\begin{equation}
\ES\bigg[
\sup_{ t \in [ t_0, T ] }
\bigg\|
\varphi
\bigg( 
S_{ t_0, T } X_{ t_0 }
+
\int_{ t_0 }^t
S_{ s, T } Y_s 
\mathbbm{1}_
{
	\{
	\int_{t_0}^T \| S_{ r, T } Y_r \|_{\check{V}} \, dr
	< \infty \}
	}
\, ds
+
\mathcal{Z}_t
\bigg)
\bigg\|_\V
\bigg]
< \infty
.
\end{equation}
Combining this with Proposition~\ref{prop:mild_ito_stopping_limit}
completes the proof of 
Proposition~\ref{prop:mild_ito_maximal_ineq}.
\end{proof}
\subsubsection*{Acknowledgements}
This project has been supported through the SNSF-Research project $ 200021\_156603 $ ``Numerical 
approximations of nonlinear stochastic ordinary and partial differential equations''.

\bibliographystyle{acm}
\bibliography{bibfile}
%\bibliography{../Bib/bibfile}
\end{document}